\newcommand{\ord}{\mbox{ord}}
\theoremstyle{plain}
\newtheorem*{theorem*}{Theorem}
\newtheorem*{corollary*}{Corollary}
\newtheorem{theorem}{Theorem}[section]
\newtheorem{lemma}[theorem]{\bf Lemma}
\newtheorem{corollary}[theorem]{\bf Corollary}
\newtheorem{proposition}[theorem]{\bf Proposition}
\theoremstyle{definition}
\newtheorem{definition}[theorem]{\bf Definition}
\newtheorem{questions}[theorem]{\bf Questions}
\theoremstyle{remark}
\newtheorem{remark}[theorem]{\bf Remark}
\newtheorem{notationassumptions}[theorem]{\bf Notation and Assumptions}
\newtheorem{assumption}[theorem]{\bf Assumption}
\newcommand{\calM}{{\mathcal M}}
\newcommand{\calO}{{\mathcal O}}
\newcommand{\calS}{{\mathcal S}}
\newcommand{\calV}{{\mathcal V}}
\newcommand{\OO}{{\mathcal O}}
\newcommand{\C}{{\mathbb C}}
\newcommand{\F}{{\mathbb F}}
\newcommand{\N}{{\mathbb N}}
\newcommand{\Q}{{\mathbb Q}}
\newcommand{\Z}{{\mathbb Z}}
\newcommand{\pp}{{\mathfrak p}}
\newcommand{\qq}{{\mathfrak q}}
\newcommand{\cc}{{\mathfrak c}}
\newcommand{\ttt}{{\mathfrak t}}
\newcommand{\aaa}{{\mathfrak a}}
 \newcommand{\bb}{{\mathfrak b}}
\newcommand{\be}{\begin{enumerate}}
\newcommand{\ee}{\end{enumerate}}
\def\ord{\mathop{\mathrm{ord}}\nolimits}
\title[First-order definitions of rings of integral functions]{First-order definitions of rings of integral functions over algebraic extensions of function fields and undecidability}
\author{Alexandra Shlapentokh}
\address{Department of Mathematics, East Carolina University, Greenville, NC 27858}
\email{shlapentokha@ecu.edu}
\thanks{While working on this project AS was supported by an NSF FRG grant DMS-2152098}
\author{Caleb Springer}
\address{Center for Communications Research, Princeton, NJ 08540}
\email{c.springer.math@gmail.com}
\thanks{When this project began, CS was a Heilbronn Research Fellow at University College London, partially supported by the Additional Funding Programme for Mathematical Sciences, delivered by EPSRC (EP/V521917/1) and the Heilbronn Institute for Mathematical Research.}
\date{\today}
\begin{document}
\begin{abstract}
    In this paper, we study questions of definability and decidability for infinite algebraic extensions ${\bf K}$ of $\F_p(t)$ and their subrings of $\calS$-integral functions. 
    We focus on fields ${\bf K}$ satisfying a local property which we call $q$-\emph{boundedness}. This can be considered a function field analogue of prior work of the first author \cite{Shlapentokh18} which considered algebraic extensions of $\Q$.
    One simple consequence of our work states that if ${\bf K}$ is a $q$-bounded  extension of $\F_p(t)$, then for infinitely many non-constant $u$ the integral closure $\OO_{\bf K}$ of $\F_p[u]$ inside ${\bf K}$ is first-order definable in ${\bf K}$. Under the additional assumption that the constant subfield of ${\bf K}$ is infinite, it follows that both $\OO_{\bf K}$ and ${\bf K}$ have undecidable first-order theories, and that $\F_p[w]$ is definable in ${\bf K}$ for every non-constant $w$ in ${\bf K}$.  
    Our primary tools are norm equations and the Hasse Norm Principle, in the spirit of Rumely.  Our paper has an intersection with a recent arXiv preprint by Martinez-Ranero, Salcedo, and Utreras, although our definability results are more extensive and undecidability results are much stronger. 
\end{abstract}
\maketitle

\section{Introduction}
This paper investigates some old questions of definability and decidability in the language of rings over infinite algebraic extensions of global function fields.
Recall that a global function field is a finite extension of $\F_p(t)$ where $p$ is prime, $\F_p$ is a field with $p$ elements and $t$ is transcendental over $\F_p$.
Let ${\bf K}$ be a (possibly infinite) algebraic extension of $\F_p(t)$. 
Let ${\calO}_{\bf K}$ be the integral closure of $\F_p[t]$ in ${\bf K}$.
\begin{questions}
Consider the following questions of definability and decidability in the first-order language of rings possibly augmented with a finite number of constants.
\begin{enumerate}
    \item Is $\calO_{\bf K}$ definable over ${\bf K}$?
    \item Is the theory of ${\bf K}$ decidable?
    \item Is the theory of $\calO_{\bf K}$ decidable?
\end{enumerate}
\end{questions}
Before we proceed further let us note the following:
\begin{enumerate}
    \item The answers to the questions above are not independent of each other.  In particular, if $\calO_{\bf K}$ is definable over $\bf K$ and the theory of $\calO_{\bf K}$ is undecidable, then the theory of ${\bf K}$ is undecidable.  Conversely, if the theory of a ring of integral functions is decidable, then so is the theory of the field.
    \item For certain classes of fields ${\bf K}$ sufficiently ``close'' to the algebraic closure, the theory of the rings of integral functions of ${\bf K}$ and the theory of ${\bf K}$ itself are decidable; see \cites{GJR19, JR21, R23} for details.
    \item There is another approach to decidability and  definability in infinite extensions, namely considering these questions over a collection of fields rather than a single field 
    (see for example,  
    \cites{EMSW23, DF21} 
    and 
    \cite{FJ23}*{Theorems 23.9.3-4}).
\end{enumerate}

In this paper we show that there exists a large class of algebraic extensions ${\bf K}$ of $\F_q(t)$ such that rings of integral functions are definable within these fields.  
Moreover, in many cases, the rings of integral functions of these fields have an undecidable theory, in which case the definability result establishes that such fields have an undecidable first-order theory.
The class of fields ${\bf K}$ which we study is the class of \emph{$q$-bounded fields}. 
Here, $q$ is a rational prime number that is different from the characteristic of the field. 
The concept of $q$-boundedness is developed in full generality in Section~\ref{sec:q_bounded_def}, including a stronger condition called uniform $q$-boundedness.
In the case of Galois extensions of global function fields, $q$-boundedness and uniform $q$-boundedness are equivalent.
The following simplified condition implies both; see Proposition~\ref{prop:global_to_uniform_q}.

\begin{definition}[Global \texorpdfstring{$q$}{q}-boundedness]
\label{def:global_q_bounded}
Let $K/\F_p(t)$ be a finite extension and let ${\bf K}/K$ be a Galois algebraic extension.
Let $D=\sup\{\ord_q[\hat K: K]\}$ where the supremum is taken over all finite extensions $\hat K$ of  $K$ such that $\hat K \subset {\bf K}$.  Define ${\bf K}$ to be \emph{globally $q$-bounded} if $D<\infty$.
\end{definition}

Some of the consequences of our main results are the following theorems.
We restrict to the Galois case for simplicity here, and provide references to more general results in the main text. For the remainder of the section, let  $K/\F_p(t)$ be a finite extension and let ${\bf K}/K$ be a globally $q$-bounded Galois extension.

\begin{theorem}[Corollary~\ref{cor:S-ints_def}]
    The integral closure $\OO_{\bf K}$ of $\F_p[t]$ in ${\bf K}$ is first-order definable over ${\bf K}$.
\end{theorem}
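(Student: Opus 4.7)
The plan is to construct an existential first-order formula $\varphi(x)$ in the language of rings that defines $\OO_{\mathbf{K}}$ inside $\mathbf{K}$. The strategy follows Rumely's classical approach for number fields, transplanted to the function field setting: an element $x\in\mathbf{K}$ lies in $\OO_{\mathbf{K}}$ if and only if $v(x)\geq 0$ for every place $v$ of $\mathbf{K}$ not lying above the place at infinity of $\F_p(t)$, and each such local nonnegativity can be encoded by the solvability of a norm equation from a cyclic degree-$q$ extension, provided one has good uniform control of the $q$-part of the absolute Galois group locally. This is precisely what $q$-boundedness supplies.

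First, I would translate the Galois hypothesis together with global $q$-boundedness into uniform $q$-boundedness via the promised Proposition~\ref{prop:global_to_uniform_q}, yielding uniform control of the $q$-part of the local degrees $[\mathbf{K}_w:\F_p(t)_v]$ at every finite place. Second, for each finite subfield $L\subset\mathbf{K}$ containing $\F_p(t)$, I would construct cyclic degree-$q$ extensions $M/L$ with $M\not\subset\mathbf{K}$ such that $M\mathbf{K}/\mathbf{K}$ remains cyclic of degree $q$; uniform $q$-boundedness guarantees such $M$ exist and that the local behavior of $M\mathbf{K}/\mathbf{K}$ at the relevant places is controlled. Third, I would invoke the Hasse Norm Principle for cyclic extensions (lifted from the finite-subfield level, where it is classical) to translate solvability in $\mathbf{K}$ of a norm equation $N_{M\mathbf{K}/\mathbf{K}}(y)=f(x)$ into local norm conditions at all places of $\mathbf{K}$. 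Choosing the auxiliary function $f$ so that, place by place, the local norm condition forces $v(f(x))\in q\Z$, and hence $v(x)\geq 0$, I would assemble a finite conjunction of such norm-solvability assertions into the desired existential formula $\varphi(x)$.

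The most delicate step is arranging the norm equations so that the induced local conditions jointly force nonnegativity of $v(x)$ at every place above a finite place of $\F_p(t)$, and not merely at some preassigned finite set of places. This is exactly where the uniformity in Proposition~\ref{prop:global_to_uniform_q} is essential: it ensures that a fixed finite collection of cyclic degree-$q$ tests suffices to detect nonintegrality at any place, rather than requiring an unbounded (hence non-first-order) family of tests indexed by the places themselves. A secondary obstacle is verifying that the Hasse Norm Principle, which is classical for cyclic extensions of global fields, applies to the relative extension $M\mathbf{K}/\mathbf{K}$; this is handled by writing the norm equation over a sufficiently large finite subfield $L'$ with $L\subset L'\subset\mathbf{K}$, applying HNP there, and then passing to the direct limit over such $L'$ by noting that both solvability and the local conditions are detected at a finite level thanks to $q$-boundedness.
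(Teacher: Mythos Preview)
Your proposal contains a genuine gap: you aim for an \emph{existential} formula built from a ``fixed finite collection of cyclic degree-$q$ tests,'' but no such finite collection can detect nonintegrality at \emph{every} place of $\mathbf{K}$ not above infinity. Uniform $q$-boundedness bounds the $q$-part of local degrees, but it does not reduce the infinitely many local integrality conditions to finitely many norm equations with fixed parameters. In the paper, the defining formula is not existential: it has the shape
\[
\forall a\in \calV_{\mathbf{K},\calS}\ \forall b\in U_{\mathbf{K},\calS}\ \exists y\in \mathbf{L}_4(\sqrt[q]{a}):\ \mathbf{N}_{\mathbf{L}_4(\sqrt[q]{a})/\mathbf{L}_4}(y)=bx^q+b^q,
\]
where $\mathbf{L}_4$ is a ``floating'' $q$-th-root extension depending on $x,a,b$. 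The universal quantifiers over $a,b$ are what replace the infinite family of place-by-place tests: if $x$ has a pole at some $\pp_{\mathbf{K}}$ not over $\calS$, then \emph{some} choice of $a,b$ (tailored to $\pp_{\mathbf{K}}$, supplied by weak approximation) makes the norm equation unsolvable, so the universally quantified sentence fails.

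The role of (uniform) $q$-boundedness is thus quite different from what you describe. First, for each place $\pp_{\mathbf{K}}$ it guarantees a finite subfield $K_1$ and elements $a,b$ whose key properties (non-$q$-th-power residue, pole order coprime to $q$) persist in every finite $K_2\supseteq K_1$ inside $\mathbf{K}$; this is what allows the insolubility of the norm equation to be witnessed at a finite level and hence in $\mathbf{K}$. Second, and crucially, it is used to make the quantification domains $\calV_{\mathbf{K},\calS}$ and $U_{\mathbf{K},\calS}$ themselves first-order definable: one first defines the integral closure of the valuation ring $\OO_{\pp_K,\mathbf{K}}$ at each $\pp_K\in\calS$ (this is where the Galois hypothesis, via uniform $q$-boundedness at $\pp_K$, enters), and then the unit groups are definable from these. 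Your sketch provides neither the universal-quantifier architecture nor the definability of these unit sets, and the asserted reduction to a finite conjunction is not justified.
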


To extend this definability result, we show that for any algebraic extension ${\bf K}$ of $\F_p(t)$, if one $\F_p$-polynomial ring $\F_p[u]$ is definable in ${\bf K}$, then every $\F_p$-polynomial ring is definable; see Theorem~\ref{thm:def}. Combining these results provides the following conclusion.

\begin{theorem}[Corollaries~\ref{cor:dec_field_infinite_constants_general} and \ref{cor:any_poly_ring_q_bounded}]
   If the algebraic closure of $\F_p$ in ${\bf K}$ is infinite,  then the first-order theory of ${\bf K}$ is undecidable. Moreover, if $w$ is any non-constant element of ${\bf K}$, then $\F_p[w]$ is first-order definable over ${\bf K}$ with parameters. 
\end{theorem}

En route to proving our main results, we also show the following definability result for integral closures of valuation rings, which may be of independent interest.

\begin{theorem}[Theorems~\ref{thm:val_ring_ex_def}-\ref{thm:val_ring_first_order_def}]
    If $\pp_K$ is a prime of $K$,
    then the integral closure $\OO_{\pp_K,{\bf K}}$ in ${\bf K}$ of the valuation ring of $\pp_K$ has a first-order definition over ${\bf K}$.
    Moreover, if the ramification degree of $\pp_K$ in ${\bf K}$ is finite, then there is an existential definition of $\OO_{\pp_K,{\bf K}}$.
\end{theorem}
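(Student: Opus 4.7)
The plan is to follow Rumely's norm-equation strategy at the level of ${\bf K}$, using the Hasse Norm Principle as the main analytic input. By global $q$-boundedness, I fix a finite subextension $K'\subseteq{\bf K}$ of $K$ achieving $\ord_q[K':K]=D$; then any further finite extension of $K'$ inside ${\bf K}$ has degree coprime to $q$, so for every cyclic degree-$q$ extension $L/K'$ one has $L\cap{\bf K}=K'$ and thus $L{\bf K}/{\bf K}$ is itself cyclic of degree $q$. This is the structural input that makes degree-$q$ norm equations available throughout ${\bf K}$, and it is the only place where $q$-boundedness enters in a central way.

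Next, for each prime $\pp'$ of $K'$ above $\pp_K$, I would use class field theory over the global function field $K'$ to construct cyclic degree-$q$ extensions $L_i/K'$ in which $\pp'$ has prescribed local behavior (typically inert), with controlled behavior at a finite auxiliary set of primes. Since $q\neq p$, such extensions exist by Kummer theory after adjoining a primitive $q$-th root of unity and descending. At an inert prime, the local norm group consists of elements whose valuation is divisible by $q$; combining conditions imposed by several carefully chosen $L_i$ isolates integrality at primes of ${\bf K}$ above $\pp_K$ via a conjunction of norm equations.

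Being a global norm is expressible existentially in the language of rings on ${\bf K}$: writing $L_i{\bf K}={\bf K}(\alpha_i)$ with $\alpha_i$ algebraic over $K'$ (so a fixed parameter), the equation $N_{L_i{\bf K}/{\bf K}}(z)=y$ with $z=\sum_j z_j\alpha_i^j$ becomes a polynomial condition on the coordinates $z_j\in{\bf K}$. The Hasse Norm Principle for the cyclic degree-$q$ extension $L_i{\bf K}/{\bf K}$ -- obtained by passing to a finite subextension where classical HNP for cyclic extensions of global fields applies -- converts the local valuation conditions into a global norm condition, so membership in $\OO_{\pp_K,{\bf K}}$ becomes first-order definable via a conjunction of such existential norm conditions, possibly with an outer universal quantifier enforcing the local condition at every prime above $\pp_K$ in ${\bf K}$.

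For the second assertion, once the ramification of $\pp_K$ in ${\bf K}$ stabilizes over some finite subextension $K''\supseteq K'$, the behavior of $\pp_K$ above $K''$ is determined by finitely many primes whose splitting pattern does not change further in ${\bf K}$. One can then replace the outer universal quantifier (over primes of ${\bf K}$ lying above $\pp_K$) by a fixed finite list of norm equations, yielding a purely existential definition. The main obstacle is the construction of the auxiliary extensions $L_i/K'$ with the prescribed inertial behavior, together with the verification that the Hasse Norm Principle for the infinite cyclic extension $L_i{\bf K}/{\bf K}$ is faithfully captured by norm equations over ${\bf K}$; both rely on the rigidity that $q$-boundedness imposes on the tower, namely that norm groups at the level of ${\bf K}$ remain nontrivial and reflect the local structure at $\pp_K$.
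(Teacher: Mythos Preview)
Your plan captures the broad spirit of the argument (norm equations for cyclic degree-$q$ extensions, HNP, passage to finite subextensions), but it misses the key technical device that makes the proof work, and as written it would not go through.

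The main gap is this: if you fix finitely many cyclic extensions $L_i/K'$ with $\pp'$ inert and consider norm equations ${\bf N}_{L_i{\bf K}/{\bf K}}(z)=y$, these equations can fail to have solutions for reasons having nothing to do with poles of $y$ at primes above $\pp_K$. In particular, a zero of $y$, or a prime in the divisor of the Kummer generator of $L_i$, can obstruct the norm equation even when $y$ is integral at $\pp_K$. Your proposal to ``combine conditions imposed by several carefully chosen $L_i$'' does not address this: no fixed finite collection of such extensions will isolate integrality at $\pp_K$ uniformly over all $x\in{\bf K}$. The paper's solution is to use a \emph{floating} base: the norm equation is ${\bf N}_{{\bf L}_2(\sqrt[q]{a})/{\bf L}_2}(y)=bx^q+b^q$, where ${\bf L}_1={\bf K}(\sqrt[q]{1+(bx^q+b^q)^{-1}})$ and ${\bf L}_2={\bf L}_1(\sqrt[q]{1+(a+a^{-1})b^{-1}})$ depend on $x$ itself. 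Passing to ${\bf L}_1$ forces every zero of $bx^q+b^q$ to have order divisible by $q$; passing to ${\bf L}_2$ makes the divisor of $a$ a $q$-th power so that ${\bf L}_2(\sqrt[q]{a})/{\bf L}_2$ is unramified and HNP applies cleanly. The specific right-hand side $bx^q+b^q$, with $\ord_{\pp}b=-1$, is what converts ``$x$ has a pole at $\pp$'' into ``order not divisible by $q$'' while leaving ``$x$ integral at $\pp$'' as ``order divisible by $q$''. None of these ingredients appear in your sketch.

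For the first-order (non-existential) case, your idea of an ``outer universal quantifier over primes above $\pp_K$'' is not available in the language of rings. The paper's mechanism is different and concrete: when ramification is only $q$-bounded (not absolutely bounded), the norm equation above defines a set $N$ which may be slightly larger than $\OO_{\pp_K,{\bf K}}$, namely those $x$ with $\ord_{\pp}x>\frac{q-1}{q}\ord_{\pp}b$. One then recovers the exact valuation ring as $R=\{x\in N:\forall u\in N,\ xu\in N\}$, the multiplicative stabilizer of $N$. This is the step that introduces the single universal quantifier, and it has no analogue in your proposal. Finally, HNP is only a theorem for global fields; you cannot invoke it directly for $L_i{\bf K}/{\bf K}$. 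The paper handles this by a careful descent (its Proposition on towers of finite subextensions) showing that any purported solution or non-solution over ${\bf K}$ already lives over some finite $\hat F\subset{\bf K}$ where classical HNP applies.
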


We start with an overview of some relevant history of definability and decidability in Section~\ref{sec:history}.
In Section~\ref{sec:definability_and_norm_eqs}, we present some foundational results for viewing norm equations as polynomial equations.
The technical background for analyzing norms equations is presented in Section~\ref{sec:fields_and_norms}.
The first definability results appear in Section~\ref{sec:global}, which is dedicated to finite extensions of $\F_p(t)$.
We develop the concept of $q$-boundedness in Section~\ref{sec:q_bounded_def}, which is the key concept used in all subsequent sections.
The definability of integral closures of valuations rings is proven in Section~\ref{sec:def_val_ring}, which leads to the definability of rings of $\calS$-integers in Section~\ref{sec:q-bounded-S-ints}.
We deduce consequences of undecidability in Section~\ref{sec:decidability}.
Finally, in Section~\ref{sec:def_poly_rings}, we prove that if any $\F_p$-polynomial ring is definable in an algebraic extension of $\F_p(t)$, then every $\F_p$-polynomial ring is definable. 

\subsection{Valuations and primes of global function fields}
Valuations and primes of global function fields play a key role in the proofs of our results.  We briefly review the relevant definitions and suggest \cite{FJ23} for more details.
\begin{definition}
\label{def:val}
    Let $K$ be a global function field. Then a (non-trivial discrete) valuation $v_K$ of $K$ is a map $v_K: K \longrightarrow \Q \cup\{\infty\}$ with the following properties:
    \begin{enumerate}
    \item $v_K(K^{\times}) \cong \Z$ (for the sake of convenience we often ``renormalize'' $v_K(K^{\times})$ to be $\Z$),
\item $v(x)=\infty$ if and only if $x = 0$,
\item For all $x,y \in K$ we have $v(xy)=v(x)+v(y)$,
\item For all $x,y \in K$ we have $v(x+y)\geq \min(v(x),v(y))$.
\item For all $x,y \in K$ with $v(x)< v(y)$ we have that $v(x+y)=v(x)$.
    \end{enumerate}
\end{definition}

Let $\OO_v \subset K$ be the ring of all elements $x \in K$ with $v(x) \geq 0$.  One can show that $\OO_v$ is a local ring, that is $\OO_v$ has a unique maximal ideal consisting of all $x \in R_v$ with $v(x)>0$.  We will refer to this ideal as a prime of $K$ and denote it by $\pp_K$ (as well as other fraktur letters). 

The algebraic closure of $\F_p$ in $K$ will be referred to as the constant field of $K$. It is a finite extension of $\F_p$ because $K$ is a finite extension of $\F_p(t)$.  Any valuation of $K$ is trivial on its constant field, that is for any constant $c \in K$ we have that $v(c)=0$.  The degree of the residue field of $\pp_K$ over the constant field is called the degree of $\pp_K$.

The ideal $\pp_K$ of $\OO_v$ is generated by an element $\pi_K$, called the uniformizer, and each $x\in K^\times$ can be written as $x = u\pi_K^n$ for some $n\in \Z$ and $u\in \OO_v^\times$.
This number $n$ is called the order of $x$ at $\pp_K$ and is denoted by $\ord_{\pp_K}x$.
The ``normalization'' of the value group of $v_K$ as above results in $v_K(x)=\ord_{\pp_K}x$.  

All valuations of $\F_p(t)$ correspond to irreducible  polynomials in $t$ or to the degree of these polynomials.  Given a finite extension $L/K$ of global function fields, we can extend valuations of $K$ to $L$. The number of such extensions can be bigger than one but it is always finite.  If $v_L$ is an extension of $v_K$, then $[v_L(L):v_K(K)]$ is called the ramification degree of $v_L$ over $v_K$.  We will also denote the ramification degree by $e(\pp_L/\pp_K)$, where $\pp_L$ and $\pp_K$ are the prime ideals corresponding to $v_L$ and $v_K$ respectively. 
Indeed, if we extend $\pp_K$ to an ideal of the integral closure $\OO_{\pp_K, L}$ of $\OO_{\pp_K}$ inside $L$, then we recognize $\pp_L$ as a prime factor of $\pp_K\OO_{\pp_K, L}$. 
The degree of the residue field of $v_L$ over the residue field of $v_K$ is called the relative degree of $v_L$ over $v_K$ and is denoted by $f(\pp_L/\pp_K)$.  

If ${\bf K}$ is an algebraic (possibly infinite) extension of a global function field $K,$ a valuation $v_K$ has an extension $v_{\bf K}$ (possibly infinitely many extensions) to ${\bf K}$.  This extension $v_{\bf K}$ will satisfy all the properties from Definition \ref{def:val} except that $v_{\bf K}({\bf K}^{\times})$ may no longer be isomorphic to $\Z$. We can define the valuation ring $\OO_{v_{\bf K}}$, the unique maximal ideal $\pp_{\bf K}$ and its residue field as in the finite case.  

If $K_0=K \subset K_1 \subset \ldots$ is an infinite tower of global function fields such that ${\bf K}=\bigcup_{i=0}^{\infty}K_i$ and $v_{\bf K}$ is an extension of $v_K$ as above, then we can define $v_{K_i}$ to be the restriction $v_{{\bf K}|K_i}$ of $v_{{\bf K}}$ to $K_i$ and note that
\begin{itemize}
\item For each $i \geq0$, $v_{K_i}$ is a discrete valuation on $K_i$ extending $v_{K_{i-1}}$;
    \item $R_{\bf K}=\bigcup_{i=0}^{\infty} R_{v_{K_i}}$;
    \item $\pp_{\bf K}=\bigcup_{i=0}^{\infty}\pp_{K_i}$.
\end{itemize}
If we follow the factorization of $\pp_{K_0}$ through the tower $K_0\subseteq K_1\subseteq \dots$, we see that its factors in subfields of the tower form a tree.
We refer to this tree as the factor tree of $\pp_{K_0}$.
Observe that a choice of $\pp_{\bf K}$ over $\pp_{K_0}$ corresponds precisely to a choice of path through the factor tree.

\subsection{The role of \texorpdfstring{$q$}{q}-boundedness and norm equations}
As we mention below the existential definability of valuation rings of global function fields has been known for a long time due to results of Rumeley. He used the uniform nature of these definitions to define polynomial rings contained in these fields. The natural question that has arisen since then is whether these definitions can be extended to infinite algebraic extensions of global fields.  The notion of $q$-boundedness describes extensions where one can generalize some ideas of Rumeley to define integral closures of valuation rings and polynomial rings.  One can show that in a $q$-bounded field the Brauer group is non-trivial and while we do not use central simple algebras directly, we do use norm equations closely connected to these algebras.

The importance of $q$-boundedness can be seen from its influence on the solvability of norm equations.  
Let ${\bf K}$ be an infinite algebraic extension of a rational function field $\F_p(t)$.  
For a non-$q$-th power $a$ and a nonzero element $b$ in ${\bf K}$, consider the solvability of the following norm equation over ${\bf K}(\sqrt[q]{a})$.
\[
{\bf N}_{{\bf K}(\sqrt[q]{a})/{\bf K}}(y)=bx^q + b^q.
\]
As shown in Section~\ref{sec:definability_and_norm_eqs}, this norm equation corresponds to a system of polynomial equations over $\bf{K}$.
Note that we are interested in making sure that $x$ is integral at all but finitely many primes.
The key assumption of $q$-boundedness implies that for any ${\bf K}$-prime $\qq_{\bf K}$ there exists a finite extension $K \subset {\bf K}$ containing $\F_p(t)$ with the following property: for any finite extension $K'$ of $K$ inside ${\bf K}$, there is some $K'$-prime $\qq_{K'}$ below ${\qq_{\bf K}}$, some $a \in K'$ and some $b \in K'$ so that $\ord_{\qq_{K'}}b=-1$ and $\qq_{K'}$ is inert in the extension $K'(\sqrt[q]{a})/K'$. 
Then, if $x \in {\bf K}$ has a pole at $\qq_{\bf K}$, we find that $x$ has a pole at some $\qq_{K'}$ as above so that $\ord_{\pp_{K'}}(bx^q+b^q)<0$ and $\ord_{\qq_{K'}}(bx^q+b^q)\not \equiv 0 \bmod q$.  Since $\qq_{K'}$ is inert in the extension $K'(\sqrt[q]{a})/K'$, we can now conclude that the norm equation
\begin{equation}
\label{eq:normintro}
{\bf N}_{{K'}(\sqrt[q]{a})/{K'}}(y)=bx^q+b^q    
\end{equation}
has no solution locally at $\qq_{K'}$ and therefore no solution globally.  Further, for any sufficiently large finite extension $K''$ of $K'$ we can find $a$ and $b$ as above to block the solution to the corresponding norm equation over $K''$.  Thus, if we append $\forall a\forall b$ to our norm equation we will obtain a statement which does not hold over ${\bf K}$ if $x$ has a pole at $\pp_{\bf K}$.
We pause to remark that, in reality, we will choose $a$ and $b$ to be elements from certain large subsets, although this is not conceptually crucial for the exposition here.

Although this is a good start to producing a definition of $\mathcal S$-integral functions, more care is required because the norm equation \eqref{eq:normintro} can fail to have solutions for reasons other than poles of $x$.  To make sure that the poles of $x$ are the only obstruction to the solubility of the equation, we use a dynamic version of equation~\eqref{eq:normintro}.  In other words we change the fields where we try to solve the norm equation depending on $x$ and $a$.  In the modified field all zeros of $bx^q+b^q$ and all poles and zeros of $a$ are of order divisible by $q$.  Thus, adjoining $q$-th root of $a$ produces an unramified extension and we may apply the Hasse Norm Principle. In particular, zeros of $bx^q+b^q$ have no influence on the existence of solutions to the norm equations.

\begin{notationassumptions}
\label{notation}
We use the following throughout the paper.
\begin{itemize}
    \item We use $q$ and $p$ to denote distinct rational prime numbers.
    \item For $m\geq 1$, let $\F_{p^m}$ denote a finite field of $p^m$ elements.
    \item Let $\zeta_q$ be a primitive $q$-th root of unity in the algebraic closure of $\F_p$.
    \item Let $t$ be transcendental over $\F_p$.
    \item Let $K, L, M, \ldots$ denote finite extensions of $\F_p(t)$.
    \item Let ${\mathbf K}, {\mathbf L}, {\mathbf M}, \ldots$ denote (possibly infinite) algebraic extensions of $K, L, M, \ldots$.
    \item For ${\bf K}$ as above, let ${\bf k}$ denote the algebraic closure of a finite field in ${\bf K}$.  We will refer to ${\bf k}$ as the field of constants of ${\bf K}$.
    \item Let $\qq_K, \pp_K, \ldots$ denote primes or valuations of $K$. If $L/K$ is a finite extension, then let $\qq_L, \pp_L, \ldots$ denote primes of $L$ lying above $\qq_K, \pp_K,\ldots$.  In other words, we assume that the following equality of ideal holds:  $\pp_L \cap K=\pp_K$.
    \item For a prime $\pp_K$ (resp. a valuation $v_K$), let $\kappa_{\pp_K}$ (resp. $\kappa_{v_K}$) denote the residue field of $K$ at $\pp_K$.
    \item Given ${\bf K}$ and $\pp_{\bf K}$ we will also write $\pp_{\bf K}$ for the extension of $\pp_{\bf K}$ to the corresponding completion ${\bf K}_{\pp_{\bf K}}$.
    \item For an algebraic extension  ${\bf K}$ of a global function field $K$, write $\OO_{\pp_K}$ for the valuation ring of $\pp_K$ and let $\OO_{\pp_K,{\bf K}}$ be the integral closure of $\OO_{\pp_K}$ in ${\bf K}$.
    \item Let $\calS_K$ be a finite set of valuations of a global function field $K$.  If ${\bf K}$ is an algebraic extension of $K$, then let $\OO_{{\bf K}, \calS_{K}}$ be the integral closure of the ring of $\calS_K$-integers $\OO_{K,\calS_K}$ in ${\bf K}$. 
    We call $\OO_{{\bf K}, \calS_{K}}$ the ring of $\calS_K$-integral functions in ${\bf K}$.
    
    \item Assuming ${\bf K}$ and $\calS_K$ are as above, let $U_{{\bf K},\calS_K}$ be the set of elements of ${\bf K}$ that are units at all extensions of $\calS_K$ in ${\bf K}$ and let $\calV_{{\bf K}, \calS_K}$ be the set of elements $x$ in  ${\bf K}$ such that $v(x-1)>0$ for all $v$ extending valuations of $\calS_K$.
\end{itemize}
\end{notationassumptions}

\section{A brief history of the questions of definability and decidability of global fields and their algebraic extensions}
\label{sec:history}

The modern part of the first-order definability story starts with Julia Robinson who showed in 1949 that $\Z$ had a first-order definition over $\Q$ and therefore the first-order theory of $\Q$ was undecidable \cite{JRobinson49}.  Later she also showed that $\Z$ was first-order definable over rings of integers of number fields and rings of integers of number fields are first-order definable over their fields of fractions \cite{JRobinson59}.  Robinson also considered infinite algebraic extensions of $\Q$ and produced first examples of integral closures of $\Z$ in infinite algebraic extensions where $\Z$ was first-order definable (see \cite{JRobinson62}).  In the same paper she also constructed a uniform (i.e. independent of the number field in question) first-order definition of $\Z$ over rings of integers of number fields. 
In \cite{Rumely80}, R. Rumely showed that a polynomial ring is first-order definable over any global function field. His definition was uniform across all global function fields and required only one absolutely necessary parameter, i.e. the parameter that indicates which polynomial ring is being defined.
Both J. Robinson and R. Rumely produced existential definitions of valuation rings, over number fields in the case of J. Robinson and all global fields in the case of R. Rumely.

In \cite{Denef80} J. Denef, later joined by L. Lipshitz \cite{DL78}, initiated the study of existential definability and decidability over global fields and function fields of characteristic 0.  
Denef and Lipshitz showed that $\Z$ has an existential definition over the ring of integers of any extension of degree at most 2 of a totally real number field (\cite{DL78}), and therefore H10 is undecidable over those rings. This class of fields includes all abelian extensions of $\Q$, for example.
Over number fields they were followed by a number of other people, including Pheidas \cite{Pheidas88}, Shlapentokh \cite{Shlapentokh89}, and Videla \cite{Videla89}.  
In \cite{Poonen02}, Poonen tied the ranks of elliptic curves to the definability of $\Z$.  
His original idea was generalized in \cite{Shlapentokh08} and \cite{Shlapentokh09}, and eventually to a result of Mazur and Rubin (\cite{MR10}) showing that Shafarevich-Tate conjecture implied that $\Z$ has a Diophantine definition over rings of integers of all number fields.  
Pasten and Murty in \cite{MP18} showed that other well-known conjectures in Number Theory had the same implication.  
Using elliptic curves, Garcia-Fritz and Pasten  constructed a new family of number fields where $\Z$ was definable over rings of integers (\cite{GFP20}).  
There are also a number of other results concerning existential definability of $\Z$ over the rings of algebraic numbers of finite and infinite algebraic extensions of $\Q$ and bigger rings (\cite{Shlapentokh97}, \cite{Shlapentokh04}, \cite{PS05},\cite{MRS24}, \cite{CPZ05}, \cite{EGS11}, \cite{EE09}, \cite{EMPS17}, \cite{Miller22}).  
In particular, from the results of the first author and Kato we know that $\Z$ is existentially definable over the ring of integers of any abelian algebraic extension of $\Q$ with finitely many ramified primes \cite{Shlapentokh08}.

There are a number of definability and decidability results over rings and fields of functions of characteristic 0 (\cite{MS22}, \cite{Demeyer10}, \cite{DD12}, \cite{Eisentraeger04}, \cite{Eisentraeger07}, \cite{MB05}, \cite{MBS09}).  Of a particular interest to the readers of this paper may be a result of Kollar showing that $\C[t]$ is not existentially definable over $\C(t)$ (\cite{Kollar08}).

In 1991, Pheidas constructed an existential model of $\Z$ over global function fields of characteristic greater than 2; see \cite{Pheidas91}. In 1994, Videla did the same for global function fields of characteristic 2 in \cite{Videla94}.  These results were generalized by the first author and Eisentr\"ager in \cites{Shlapentokh96, Shlapentokh00, Shlapentokh02_Compositio, Eisentraeger03, Shlapentokh03, ES09,Eisentraeger12}.  In \cite{ES17}, they showed that any function field of positive characteristic has a first-order model of $\Z$ and therefore the first-order theory of any function field of positive characteristic is undecidable.  In the same paper it was shown that as long as a function field of positive characteristic did not contain the algebraic closure of a finite field, the field in question had an existential model of $\Z$ and therefore H10 was undecidable over such a field.

In 1992, generalizing results of Denef \cite{Denef79}, the first author showed that $\Z$ has an existential model over a ring of $\calS$-integers of any function field of positive characteristic \cite{Shlapentokh92}. 
The first author has also showed that rings of $\calS$-integers are existentially definable over big rings with the set of inverted primes of density arbitrarily close to 1 (see \cite{Shlapentokh98}, \cite{Shlapentokh02_Monatsh}).  
Eisentr\"{a}ger was the first person to consider defining valuation rings in infinite algebraic extensions of a rational field of positive characteristic, where the infinite extension was not obtained by extending the constant field (see \cite{Eisentraeger05}). 

In 2009, Poonen constructed a new first-order definition of finitely generated rings of algebraic integers over their fraction fields \cite{Poonen09}.  This definition was of the form $\forall\forall\exists \ldots \exists$.  This definition was structurally much simpler than the ones produces by J. Robinson and Rumely.   This definition was simplified further by K\"onigsmann who constructed a purely universal definition of $\Z$ over $\Q$ as well as a definition of the form $\forall\exists \ldots \exists$; see \cite{Koenigsmann16}.  J. Park showed the rings of algebraic integers of number fields also had a purely universal definition over their fields of fractions \cite{Park13}.  The problem of constructing a definition of the form $\forall\exists \ldots \exists$ for these rings remains open.

The number field results concerning new simpler first-order definitions over number fields led to similar results to over global function fields where the first author constructed a definition of the form $\forall\exists \ldots \exists$ for rings of $\calS$-integers \cite{Shlapentokh15} and Eisentr\"{a}ger and Morrison produced a purely universal definition of these rings \cite{EM18}.

In 1986, a paper of Rumely showing that the existential theory of the ring of all algebraic integers is decidable (\cite{Rumely86}) renewed interest in the study of the first-order theory over infinite algebraic extensions of global fields.  Of course the first decidability result concerning the first-order theory of a field belongs to Tarski who showed that this theory was decidable over the reals.  

Rumely's results were generalized by Van den Dries who showed that the first-order theory of the ring of all algebraic integers was decidable \cite{vandenDries88}.  It turned out that there were ``smaller'' rings of algebraic numbers where the theory was decidable and that a  similar decidability phenomena occurred over algebraic extensions of global function fields and was connected to instances where local-global principle applied. These results belong to, among others, to Green, Roquette, and Pop \cite{GPR95}, Moret-Bailly (\cite{MB89}), van den Dries and Macintyre (\cite{vdDM90}), Razon (\cite{R23}), Jarden and Razon (\cite{JR08}). 

Another remarkable result concerning decidability belonged to Fried, V\"olklein and Haran who showed that the first-order theory of the field of totally real algebraic numbers was decidable \cite{FHV93}.  We remind the reader that J. Robinson showed that the first-order theory of the ring of all totally real algebraic integers is undecidable \cite{JRobinson62}.  So this field constitutes a boundary of some sort between decidable and undecidable.  We are very far from understanding the exact nature of this boundary.  Some of the first steps in this directions were taken by the first author in \cite{Shlapentokh18} and the second author in \cite{Springer23}.  
A result on existential undecidability by the first author which is directly relevant for this paper states that the existential theory in the language of rings was undecidable over rings of $\calS$-integer of any function field of positive characteristic and in some cases over integral closures of rings of $\calS$-integers in infinite extensions of function fields of positive characteristic; see \cite{Shlapentokh92}.

Of course the results mentioned above do not constitute a complete history of first-order definability and decidability to date.  For more information on the matter we refer the reader to \cite{Darniere00}, \cite{Pasten23}, \cite{Poonen08}, \cite{ShlapentokhBook}.

\subsection{\texorpdfstring{A result of Martinez-Ranero, Salcedo, and Utreras (\cite{MRSU24})}{A result of Martinez-Ranero, Salcedo, and Utreras}}
A recent arXiv preprint by Martinez-Ranero, Salcedo, and Utreras explores questions similar the ones raised in this paper.  
There is an intersection with our results primarily due to the fact that the definability method used in both papers is based on the method used in \cite{Shlapentokh18}.  
Namely, both \cite{MRSU24} and this paper define rings of integral function over $q$-bounded infinite extensions of $\F_p(t)$. In   \cite{MRSU24} this definition requires at least one absolutely $q'$-bounded prime, where $q'$ is a rational prime number which maybe equal to $q$. No extra requirements are necessary for our definition.
In \cite{MRSU24}, the authors also require the extension to be separable, while we do not. 

Additionally, we construct a first-order definition of the constant field without any parameters and no assumptions about uniformly $q$-bounded primes (named \emph{completely} $q$-bounded in \cite{MRSU24}). In the case of an infinite constant field, we construct a definition (with a parameter) of a polynomial ring. Observe that at least one parameter is necessary to indicate which polynomial ring is being defined.
However, the main difference between the papers is the undecidability results.  
The main undecidability result from \cite{MRSU24} is stated below.
\begin{theorem}[Martinez-Ranero, Salcedo and Utreras, Theorem 5.1]
\label{thm:MRSU}
Let $p$ be an odd prime and $r$ a prime congruent
to $3 \bmod 4$ and such that $p$ is a quadratic residue modulo $r$.
The first-order theory of the structure $\F_{p^{a}}(t^{r^{-\infty}})$ in the language of rings $\{0, 1, +, \times\}$
interprets the theory R of Tarski, Mostowski and Robinson. In particular, it is undecidable.
\end{theorem}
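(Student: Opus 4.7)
The plan is to leverage the $q$-boundedness machinery introduced earlier in the paper to first produce a definable polynomial-like subring of ${\bf K} = \F_{p^a}(t^{r^{-\infty}})$, and then to interpret Robinson's essentially undecidable theory $R$ in this subring by exploiting the valuation structure at the prime $(t)$ together with the definable finite constant field $\F_{p^a}$.

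First, I would observe that ${\bf K}$ is trivially globally $q$-bounded for every prime $q \neq r$: every finite subextension of $\F_p(t)$ inside ${\bf K}$ sits inside some $\F_{p^a}(t^{r^{-n}})$, which has degree a power of $r$, so $\ord_q[\hat K : K] = 0$ in the notation of Definition~\ref{def:global_q_bounded}. In particular ${\bf K}$ is globally $2$-bounded. Because the tower need not be Galois over $\F_{p^a}(t)$ when $\F_{p^a}$ lacks the necessary $r^n$-th roots of unity, I would pass to its Galois closure $\tilde{\bf K} = \F_{p^a}(\zeta_{r^\infty}, t^{r^{-\infty}})$, invoke Corollary~\ref{cor:S-ints_def} to obtain a first-order definition of $\calO_{\tilde{\bf K}}$, and then descend the definition to ${\bf K}$ using the parameter-free definition of the constant field promised by the present paper, carving out $\calO_{\bf K} = \bigcup_{n} \F_{p^a}[t^{r^{-n}}]$ as a definable subset of ${\bf K}$.

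Next, I would interpret $R$ inside $\calO_{\bf K}$. Fix a ${\bf K}$-prime $\qq$ above $(t) \subset \F_p[t]$ and use the order function $\ord_\qq \colon \calO_{\bf K}^\times \to \Z[1/r]_{\geq 0}$; the definable constant field $\F_{p^a}$ together with the definable valuation ring $\OO_{\qq,{\bf K}}$ (supplied by Theorems~\ref{thm:val_ring_ex_def}--\ref{thm:val_ring_first_order_def}) let one cut out a definable copy of $\N$ sitting inside $\calO_{\bf K}$, for example as the exponents of a definable family of monomials $c\, t^{n}$ with $c \in \F_{p^a}^\times$. Addition and multiplication on this copy of $\N$ are then encoded by first-order relations in the language of rings using multiplication of monomials and comparison of valuations. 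Once $(\N, 0, 1, +, \times)$ is interpreted, essential undecidability of $R$ in the sense of Tarski--Mostowski--Robinson immediately yields the theorem.

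The principal obstacle is the interpretation step: the value group at $\qq$ is the dense group $\Z[1/r]$ rather than $\Z$, so one cannot simply read off a discrete copy of $\N$ from the valuation, and one must instead pin down the integral level by combining valuation information, the finite constant field, and finite Frobenius-type relations. The arithmetic hypotheses $r \equiv 3 \pmod 4$ and $p \in (\F_r^\times)^2$ enter at the level of the norm-equation arguments underpinning Corollary~\ref{cor:S-ints_def}: they guarantee the existence of primes of $\F_{p^a}(t)$ which remain inert in appropriate quadratic Kummer extensions at every stage of the tower, which is the crucial input making the $q=2$ analogue of~\eqref{eq:normintro} detect poles of $x$ at each chosen prime of ${\bf K}$ and hence carve out $\calO_{\bf K}$ correctly.
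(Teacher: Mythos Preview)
This theorem is not proved in the paper; it is quoted from \cite{MRSU24}. The paper's own contribution (Section~\ref{subsec:fin_factors}) is to observe that a \emph{stronger} statement, with no restriction on $p$, $r$, or the characteristic, follows at once from two ingredients: (i) the prime $(t)$ (equivalently the pole of $t$) is totally ramified in the tower $\F_{p^a}(t^{r^{-\infty}})/\F_{p^a}(t)$, hence has a single factor in ${\bf K}$; and (ii) \cite{Shlapentokh92}*{Theorem~5.1} already gives existential undecidability of the ring of $\calS$-integral functions whenever $\calS$ consists of a single prime. Definability of that ring in ${\bf K}$ then comes from Corollary~\ref{cor:S-ints_def}(1), since the pole of $t$ has ramification a pure $r$-power and relative degree~$1$ at every finite level, hence is uniformly $q$-bounded for any $q\neq r$. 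No interpretation of $\N$ from scratch is attempted or needed.

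Your proposal has three genuine gaps. First, the detour through the Galois closure $\tilde{\bf K}=\F_{p^a}(\zeta_{r^\infty},t^{r^{-\infty}})$ is both unnecessary and unworkable: unnecessary because Corollary~\ref{cor:S-ints_def}(1) applies directly to ${\bf K}$ (the pole of $t$ is uniformly $q$-bounded, as above), and unworkable because $\tilde{\bf K}/{\bf K}$ is an \emph{infinite} extension, so Proposition~\ref{prop:fo} does not let you descend definitions. Second, you correctly identify the interpretation of $\N$ inside $\calO_{\bf K}$ as ``the principal obstacle''---the value group is $\Z[1/r]$, not $\Z$---but you do not resolve it; the phrase ``finite Frobenius-type relations'' is not an argument. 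This is exactly the nontrivial step, and the paper sidesteps it entirely by invoking \cite{Shlapentokh92}. Third, your explanation of the hypotheses $r\equiv 3\pmod 4$ and $p\in(\F_r^\times)^2$ is wrong: these play no role in the norm-equation machinery of this paper (which works for any $q\neq p$), and the paper explicitly notes that its approach removes these restrictions. They are artifacts of the specific method in \cite{MRSU24}.
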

This theorem relies on the following result.
\begin{theorem}[Martinez-Ranero, Salcedo and Utreras, Theorem 4.1]
\label{MRSU2}
There exist infinitely many prime numbers r such that, for every prime power $p^a$, the theory of the structure
\[
\F_{p^a}[t^{r^{-\infty}}]
=\bigcup_{n\in \Z_{>0}}
\F_{p^a}[t^{r^{-n}}]
\]
in the language $\{0, 1, t, +,\times\}$ interprets the Theory $R$ and is thus undecidable.
\end{theorem}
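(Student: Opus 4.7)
The plan is to interpret Robinson's Theory R in $A := \F_{p^a}[t^{r^{-\infty}}]$ by first-order definably identifying the polynomial ring $\F_{p^a}[t] \subset A$, and then invoking the well-known undecidability of the first-order theory of $\F_{p^a}[t]$ (Pheidas, Videla; see Section~\ref{sec:history}). Note that the main theorems of the present paper do not directly apply here, because the constant field $\F_{p^a}$ of $A$ is finite; so a more delicate definability argument is required.

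The definability scheme I would use proceeds as follows. An element $x \in A$ lies in $\F_{p^a}[t]$ iff it is an $\F_{p^a}$-linear combination of the ``pure monomials'' $t^n$ with $n \in \Z_{\geq 0}$. The monomials $at^n$ with $a \in \F_{p^a}^\times$ can be characterized as the nonzero elements of $A$ all of whose divisors having nonzero constant term are themselves in $\F_{p^a}$: divisors of $at^n$ in $A$ have the form $bt^k$ with $k \in \Z[1/r]_{\geq 0}$, and having nonzero constant term forces $k=0$. The constant field $\F_{p^a}$ is defined by $x^{p^a}=x$, so the essential step is a first-order definition of the unique maximal ideal $\mm$ above $(t)$ (which captures ``nonzero constant term''). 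Since $\F_{p^a}(t^{r^{-\infty}})/\F_{p^a}(t)$ is globally $q$-bounded for every prime $q \ne r, p$ (the degree of every finite subextension is a power of $r$), the norm-equation machinery of Section~\ref{sec:def_val_ring} is available; Theorem~\ref{thm:val_ring_first_order_def} yields a first-order definition of the integral closure of the valuation ring at $\mm$, from which $\mm$ itself is extracted, and finally $\F_{p^a}[t]$ as the $\F_{p^a}$-subalgebra generated by the definable set of monomials.

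The arithmetic condition on $r$ is chosen to guarantee that the norm-equation construction \eqref{eq:normintro} blocks solvability uniformly up the tower of finite Kummer-type subextensions $\F_{p^a}(t^{1/r^n})/\F_{p^a}(t)$: for each $n$ one needs enough inert primes over auxiliary primes $q \ne r, p$, and this behavior must persist as $n \to \infty$ and for every prime power $p^a$ simultaneously. A Chebotarev density argument, or a suitable application of Dirichlet's theorem in an arithmetic progression cutting out the desired splitting behavior, then yields infinitely many primes $r$ with this uniform property.

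The main obstacle is the infinite ramification of $\mm$ over $(t)$: the existential result Theorem~\ref{thm:val_ring_ex_def} does not apply, so one must use the $\forall\exists$-style Theorem~\ref{thm:val_ring_first_order_def} instead. Moreover, since $A$ is already the integral closure of $\F_{p^a}[t]$ in its fraction field, the definition of a single integral closure of a valuation ring inside $A$ does not on its own cut out the subring $\F_{p^a}[t]$; one must combine the valuation information at $\mm$ with the monomial characterization above in order to extract the integer-exponent condition, and verify that the resulting formula can be made uniform across the entire tower $\bigcup_n \F_{p^a}(t^{1/r^n})$ rather than merely at each finite level. This uniformity is precisely what the chosen arithmetic condition on $r$ is designed to supply.
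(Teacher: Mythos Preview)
Your approach has a genuine gap, and it also diverges from how the paper treats this statement.

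The paper does not give an independent proof of this theorem: it is quoted as a result of Martinez-Ranero, Salcedo and Utreras. What the paper does observe (immediately after the statement, and in Section~\ref{subsec:fin_factors}) is that the undecidability of $\F_{p^a}[t^{r^{-\infty}}]$ --- in fact its \emph{existential} undecidability --- follows directly from \cite{Shlapentokh92}*{Theorem~5.1}. The point is that the pole of $t$ is totally ramified in the tower $\F_{p^a}(t)\subset\F_{p^a}(t^{1/r})\subset\cdots$, hence has a unique factor in ${\bf K}=\F_{p^a}(t^{r^{-\infty}})$, and $A=\F_{p^a}[t^{r^{-\infty}}]$ is precisely the ring of $\calS$-integers for this one-element set $\calS$. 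The cited theorem then applies to $A$ itself, with no need to cut out a smaller polynomial subring and with no arithmetic restriction on $r$ or on the characteristic.

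Your proposed route, by contrast, tries to first-order define $\F_{p^a}[t]$ inside $A$, and the key step does not work. Your characterization of ``monomials'' --- nonzero elements all of whose divisors with nonzero constant term lie in $\F_{p^a}$ --- does not single out integer exponents. By your own computation, the divisors of $at^\alpha$ in $A$ are the $bt^k$ with $k\in\Z[1/r]\cap[0,\alpha]$, and those with nonzero constant term are exactly the nonzero constants, \emph{regardless of whether $\alpha$ is an integer}. So the set you have defined is $\{at^\alpha:a\in\F_{p^a}^\times,\ \alpha\in\Z[1/r]_{\ge 0}\}$, and the subalgebra it generates is all of $A$, not $\F_{p^a}[t]$. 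Even setting this aside, the passage from a definable set to ``the $\F_{p^a}$-subalgebra it generates'' is not a first-order construction: it requires quantifying over finite sums of unbounded length. The essential difficulty --- separating integer exponents from arbitrary elements of $\Z[1/r]_{\ge 0}$ inside $A$ --- is precisely what your scheme leaves unaddressed, and nothing in the valuation-ring machinery of Section~\ref{sec:def_val_ring} supplies it (that machinery defines $\OO_{\pp_K,{\bf K}}$, which in this case contains all of $A$).
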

The fact that the structure
\[
\F_{p^a}[t^{r^{-\infty}}]
= \bigcup_{n\in \Z_{>0}}
\F_{p^a}[t^{r^{-n}}]
\]
in the language $\{0, 1, t, +, \times \}$ is undecidable is a corollary to a much more general result of the first author from (\cite{Shlapentokh92}*{Theorem~5.1}).  Actually, in \cite{Shlapentokh92} it is shown that the ring in Theorem \ref{MRSU2} is {\it existentially} undecidable for any positive characteristic $p$, including $p=2$. See Section \ref{subsec:fin_factors} for more details.

In our paper we show, among other undecidability results, that if a  $q$-bounded extension of $\F_p(t)$ has an infinite field of constants, then its first-order theory is undecidable; see Corollary~\ref{cor:dec_field_infinite_constants_general}.  We also show undecidability in the case of a $q$-bounded field with an inert or totally ramified  prime whether or not the field of constants is infinite; see Section~\ref{subsec:fin_factors}.

\section{Definability and norm equations}
\label{sec:definability_and_norm_eqs}

In this paper, we will pursue definability by using norm equations where the variables range over radical extensions of bounded degree. 
The following lemma clarifies that such equations correspond to existential sentences over the base field. 
This fact will be used implicitly throughout the remainder of the paper.

\begin{lemma}
\label{le:norm_as_diophantine}
Let $K$ be a field of characteristic prime to $q$ and fix a polynomial $P(t)\in K[t]$. Let $S$ be the set of tuples $(a,x,c_1,\dots, c_n)\in K^{n+2}$ such that 
\begin{equation}
    \exists y \in L: \ {\bf N}_{L(\sqrt[q]{a})/L}(y) = P(x),
    \label{eq:gen_norm_eq}
\end{equation}
where $L = K(\sqrt[q]{c_1}, \dots, \sqrt[q]{c_n})$. Then the set $S$ is existentially definable in $K^{n+2}$.
\end{lemma}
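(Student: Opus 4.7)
The plan is to express the field norm $\mathbf{N}_{L(\sqrt[q]{a})/L}$ as a polynomial in the $K$-coordinates of $y$ relative to a spanning set of $L(\sqrt[q]{a})$ over $K$, reducing the norm equation to a finite polynomial system over $K$, from which existential definability of $S$ follows immediately. Writing $y = \sum_{i=0}^{q-1} y_i (\sqrt[q]{a})^i$ with each $y_i \in L$, and in turn expanding $y_i = \sum_{\mathbf{j}} u_{i,\mathbf{j}} \prod_{k=1}^n (\sqrt[q]{c_k})^{j_k}$ with $u_{i,\mathbf{j}} \in K$, parametrizes $y$ by $q^{n+1}$ coefficients $u_{i,\mathbf{j}} \in K$, and any such choice of coefficients gives a bona fide element of $L(\sqrt[q]{a})$.

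Next I would use the universal form of the cyclic norm. There is a fixed polynomial $N_q(Y_0,\ldots,Y_{q-1}; T) \in \mathbb{Z}[Y_0,\ldots,Y_{q-1},T]$—the determinant of the multiplication-by-$\sum_i Y_i Z^i$ map on $L[Z]/(Z^q - T)$ in the basis $1, Z, \ldots, Z^{q-1}$—such that whenever $a$ is not a $q$-th power in $L$, the identity $\mathbf{N}_{L(\sqrt[q]{a})/L}\bigl(\sum_{i=0}^{q-1} y_i (\sqrt[q]{a})^i\bigr) = N_q(y_0, \ldots, y_{q-1}; a)$ holds. Substituting the $K$-parametrization of each $y_i$ into $N_q$ and reducing modulo the relations $(\sqrt[q]{c_k})^q - c_k$ yields an element of $L$ whose coefficients in the monomial basis $\{\prod_k (\sqrt[q]{c_k})^{j_k}\}$ are polynomials in the $u_{i,\mathbf{j}}, a, c_k$ with integer coefficients. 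Equating the norm with $P(x)$ and reading off these coefficients produces a finite system of polynomial equations over $K$; existentially quantifying over the $u_{i,\mathbf{j}}$ yields the desired existential definition of $S$.

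The main subtlety lies in the degenerate case when $a$ is a $q$-th power in $L$, since then $L(\sqrt[q]{a}) = L$ and the true field norm $\mathbf{N}(y) = y$ is no longer captured by the generic polynomial $N_q$; a similar degeneracy can occur when the monomials spanning $L$ over $K$ fail to be linearly independent. Both can be accommodated by recasting the existential definition as a single disjunctive polynomial identity—for example, $(b^q - a)\bigl(N_q(y_0, \ldots, y_{q-1}; a) - P(x)\bigr) = 0$ with an auxiliary $b \in L$ also existentially quantified—which forces either $a = b^q$ (in which case the norm equation is trivially solvable by $y = P(x)$) or the generic norm form to equal $P(x)$. The resulting polynomial system over $K$ remains existential and defines $S$ correctly in every case.
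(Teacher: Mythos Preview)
Your overall strategy matches the paper's: write $y$ in coordinates over $K$, express the norm as a fixed polynomial form, and equate coefficients. Your product trick $(b^q-a)\bigl(N_q(y_0,\dots,y_{q-1};a)-P(x)\bigr)=0$ is a clean alternative to the paper's separate disjunct $\exists\alpha\in L:\alpha^q=a$ for handling the case $a\in L^q$, and it works.

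There is, however, a gap in your treatment of the \emph{other} degeneracy, when the monomials $\prod_k(\sqrt[q]{c_k})^{j_k}$ are linearly dependent over $K$. In that situation your ``read off the coefficients'' step imposes that $E:=(b^q-a)(N_q-P(x))$ vanish in the free algebra $R'=K[Z_1,\dots,Z_n]/(Z_k^q-c_k)$, which is strictly stronger than $E=0$ in its quotient $L$. You assert that the product trick handles this too, but it does not: the factor $(b^q-a)$ only detects whether $a\in L^q$, not whether $[L:K]<q^n$. The soundness direction of your formula is fine (vanishing in $R'$ implies vanishing in $L$), but completeness is not justified: given a solution in $L$, why should there exist coordinates $u_{i,\mathbf j},v_{\mathbf j}$ making all coefficient polynomials vanish in $R'$?

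What rescues your formula is a separate observation you do not state: one can always choose a subset $I\subseteq\{1,\dots,n\}$ with $[L:K]=q^{|I|}$ and $L=K(\sqrt[q]{c_i}:i\in I)$; setting all coordinates supported outside $I$ to zero restricts the computation to the subalgebra $K[Z_i:i\in I]\subset R'$, which maps isomorphically onto $L$, and there the coefficient system is exactly $E=0$ in $L$. The paper achieves the same effect differently, via an explicit $2^n$-fold disjunction over the possible degeneracy patterns of the tower $K\subset K(\sqrt[q]{c_1})\subset\cdots\subset L$, reducing to the case where the monomials genuinely form a basis before equating coefficients.
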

\begin{proof}
    Define a tower of fields $K_j := K(\sqrt[q]{c_1}, \dots, \sqrt[q]{c_j})$ for each $0\leq j \leq n$. 
    Note that this tower $K = K_0\subseteq K_1\subseteq\dots \subseteq K_n =L$ is formed via $n$ extensions generated by $q$-th roots, 
    and each step in the tower may or may not be a trivial extension. 
    By working recursively, it is clear that the property of whether $c_i$ is a $q$-th power over $K_{i-1}$ is diophantine over $K$.
    Hence, we may define $S$ via a disjunction of $2^n$ terms, where each term of the disjunction corresponds to a sequence of $\textsc{Yes}/\textsc{No}$ answers to whether the extension $K_i/K_{i-1}$ is trivial for each $1\leq i\leq n$.
    Therefore, it is enough to assume that $c_i$ is a non-$q$-th power in $K_{i-1}$ for all $i$.  In this case, $B_{L/K} = \{\prod_{i} \sqrt[q]{c_i}^{v_i} : \vec v\in \{0,\dots, q-1\}^n\}\}$ is a $K$-basis for $L$.

   We remark that the meaning of the norm equation \eqref{eq:gen_norm_eq} depends on whether $\sqrt[q]{a}$ is a $q$-th power in the base field $L$ or not; cf. \cite{Lang_Algebra}*{Theorem~VI.9.1}.
    In the former case, the equation is trivially satisfied because $L = L(\sqrt[q]{a})$ and ${\bf N}_{L/L}(P(x)) = P(x)$ for all $x\in L$. Therefore, we will form another disjunction where the first term is $\exists \alpha \in L: \ \alpha^q = a$. This formula can be reinterpreted as a system of equations over $K$ by rewriting $\alpha$ in terms of the basis $B_{L/K}$, expanding the product and comparing terms. 
    
    It only remains to represent the norm equation in the case where $L(\sqrt[q]{a})/L$ is a non-trivial extension, so we assume that $a$ is not a $q$-th power in $L$.
    Any $y\in L(\sqrt[q]{a})$ is written in the natural 
    $L$-basis as $y = d_0 + d_1\sqrt[q]{a} + \dots + d_{q-1}\sqrt[q]{a}^{q-1}$, 
    and its norm is
    $$
        \prod_{j = 0}^{q-1} \left(d_0 + d_1\zeta_q^j\sqrt[q]{a} + \dots + d_{q-1} \zeta_q^{j(q-1)}\sqrt[q]{a}^{q-1}\right)
    $$
    where $\zeta_q$ is a primitive $q$-th root of unity.
    In particular, the norm is clearly a polynomial over $L$ in $a$ and the coefficients of $y$ in the natural $L$-basis $\{1, \sqrt[q]{a}, \dots, \sqrt[q]{a}^{q-1}\}$. 
    Using this polynomial, expanding each element of $L$ in \eqref{eq:gen_norm_eq} in terms of the basis $B_{L/K}$ provides the desired formula over $K$.
\end{proof}

When we apply this lemma in practice, it is helpful for the base field $K$ to contain a $q$-th root of unity, since this implies that the extension $L(\sqrt[q]{a})/L$ appearing in \eqref{eq:gen_norm_eq} is cyclic; see \cite{Lang_Algebra}*{Theorem~VI.6.2}.
For the sake of definability, this assumption on $K$ can be made without loss of generality, as shown in the following proposition and corollary.
The method of proof involves essentially the same trick of rewriting equations, although we note that the following proposition has a fixed field extension, whereas the field extension varies in \eqref{eq:gen_norm_eq}.

\begin{proposition}
\label{prop:fo}
    Let $H/F$ be a finite extension of fields. If $X \subset H$ is a first-order definable set, then $X \cap F$ is a first-order definable subset of $F$.  Moreover, if $X$ is existentially definable over $H$, then $X\cap F$ is existentially definable over $F$.
\end{proposition}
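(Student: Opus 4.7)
The plan is to interpret the ring $H$ inside $F$ using a fixed $F$-basis, and then translate the defining formula for $X$ into a formula over $F$, while keeping track that this translation sends existential quantifiers to (blocks of) existential quantifiers.

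First I would fix an $F$-basis $e_1, \dots, e_n$ of $H$ with $e_1 = 1$, where $n = [H:F]$. Every element $h \in H$ is uniquely expressible as $h = \sum_{i=1}^n f_i e_i$ with $f_i \in F$, so I can encode each ``$H$-variable'' $y$ by an $n$-tuple of ``$F$-variables'' $(y^{(1)}, \dots, y^{(n)})$. The addition operation in $H$ corresponds to coordinate-wise addition in $F^n$, and the multiplication in $H$ is a fixed polynomial map $F^n \times F^n \to F^n$ determined by structure constants $c_{ij}^k \in F$ satisfying $e_i e_j = \sum_k c_{ij}^k e_k$. In particular, the relation ``$h = 0$'' becomes the conjunction $\bigwedge_{i=1}^n y^{(i)} = 0$, and any polynomial equation over $H$ in variables $y_1, \dots, y_m$ becomes a system of polynomial equations over $F$ in the coordinate variables $y_1^{(1)}, \dots, y_m^{(n)}$.

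Next I would define, recursively on formula complexity, a translation $\varphi \mapsto \tilde\varphi$ from formulas in the language of rings over $H$ to formulas over $F$. Atomic formulas translate to conjunctions of polynomial equations as above; Boolean connectives translate identically; and a quantifier $\exists y \, \psi$ (respectively $\forall y \, \psi$) translates to $\exists y^{(1)} \cdots \exists y^{(n)} \, \tilde\psi$ (respectively the analogous block of universal quantifiers). By a straightforward induction, for every $H$-formula $\varphi(x_1, \dots, x_m)$ and every tuple $(h_1, \dots, h_m) \in H^m$ with coordinate expansions $h_j = \sum_i f_j^{(i)} e_i$, we have $H \models \varphi(h_1, \dots, h_m)$ if and only if $F \models \tilde\varphi(f_1^{(1)}, \dots, f_m^{(n)})$. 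Crucially, this construction sends existential formulas to existential formulas, since universal quantifiers are never introduced.

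Finally, given $\varphi(x)$ defining $X \subseteq H$, consider the formula
\[
\Phi(x) \ := \ \tilde\varphi(x, 0, 0, \dots, 0)
\]
over $F$, where we have substituted the coordinates of $x$ against the basis with $e_1 = 1$, so that $h \in H$ corresponds to $x \in F$ exactly when $h = x\cdot e_1$, i.e.\ the coordinates in positions $2, \dots, n$ vanish. Then $F \models \Phi(f)$ if and only if $H \models \varphi(f)$, which is the case if and only if $f \in X \cap F$. Thus $\Phi$ defines $X \cap F$ in $F$, and $\Phi$ is existential whenever $\varphi$ is existential.

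There is no real obstacle here; the only bookkeeping care needed is the correct use of structure constants in the translation of multiplication, together with the observation that no universal quantifiers are introduced during the translation, which is what preserves the existential part of the statement.
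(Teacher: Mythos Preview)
Your proposal is correct and takes essentially the same approach as the paper: interpret $H$ inside $F^n$ via a fixed basis, translate each $H$-quantifier into a block of $n$ like quantifiers over $F$, and observe that no universal quantifiers are introduced. The only cosmetic differences are that the paper leaves the free variable $x$ as a single $F$-variable throughout (rather than expanding it and then specializing the higher coordinates to $0$), and the paper works directly with a prenex formula whose matrix is a single polynomial equation, whereas you give the recursive translation for arbitrary formulas.
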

\begin{remark}
    The existential version of this proposition is well-known.  See, for example \cite{ShlapentokhBook}[Lemma 2.1.17]. 
\end{remark}
\begin{proof}

Let $\{\alpha_1,\dots,\alpha_r\}$ be a basis for $H$ over $F$, which is finite by hypothesis. 
By definition, this means that every element of $H$ is represented uniquely as a sum $\sum_{j=1}^{r} c_j\alpha_j$ when the coefficients $c_j$ range over elements of $F$. 
We will exploit a well-established method of rewriting equations with respect to this basis.

Assume $X\subseteq H$ is defined by the formula
\begin{equation}
\label{eq:inH}
E_1y_1 \in H\ldots E_sy_s \in H \ P(x,\bar y)=0,
\end{equation}
where  $P(x,\bar y) \in H[x,\bar y]$ and each $E_i$ is either a universal or existential quantifier.  

For each $1\leq i\leq s$, we replace the variable $y_i$ in the polynomial $P(x,\bar y)$ with the sum $\sum_{j=1}^{r}y_{i,j}\alpha_j$.
By expanding the polynomial, and rewriting all of the $H$-coefficients in terms of the basis $\{\alpha_1,\dots, \alpha_r\}$ as needed, we may write
\[
P(x,\sum_{j=1}^{r}y_{1,j}\alpha_j, \ldots, \sum_{j=1}^{r}y_{s,j}\alpha^j)
 =\sum_{j = 1}^r \alpha_j Q_j(x,y_{1,1}, \dots, y_{s,r}).
\]
where each $Q_j$ is a polynomial with coefficients in $F$.
Because $\{\alpha_1,\dots, \alpha_r\}$ is an $F$-basis for $H$, we observe that \eqref{eq:inH} is equivalent to
\begin{equation}
\label{eq:inH2}
E_1y_{1,1}\in F \dots E_1 y_{1,r}\in F \dots E_s y_{s,r} \in F \ Q_1(x,y_{1,1}, \dots, y_{s,r})=\dots = Q_r(x,y_{1,1}, \dots, y_{s,r})=0,
\end{equation}
This formula gives the first-order definition of $X\cap F$ in $F$. Moreover, we observe that precisely the same quantifiers are used, and in the same order, except that each quantifier is now used $r$ times in a row.
\end{proof}

From Proposition \ref{prop:fo} we get the following corollary justifying the assumption following it.
\begin{corollary}
    \label{cor:zeta_in_K}
    Let $F$ be a field of characteristic not equal to $q$ and let $\zeta_q$ be a primitive $q$-th root of unity.
    If a subset $A\subseteq F(\zeta_q)$ is definable over $F(\zeta_q)$, then $A\cap F$ is definable over $F$. 
\end{corollary}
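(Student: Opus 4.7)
The plan is to derive this immediately from Proposition \ref{prop:fo} once we verify that $F(\zeta_q)/F$ is a finite extension, which is the only ingredient not stated outright in the hypothesis.

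First I would observe that because the characteristic of $F$ is not $q$, the polynomial $x^q - 1$ is separable over $F$ and admits a primitive $q$-th root of unity in its splitting field; in particular, $\zeta_q$ is algebraic over $F$ of degree at most $q - 1$, since it is a root of the cyclotomic polynomial $\Phi_q(x) = x^{q-1} + x^{q-2} + \cdots + 1$. Thus $[F(\zeta_q):F] \leq q-1 < \infty$, so the extension $F(\zeta_q)/F$ is finite.

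With finiteness in hand, I would apply Proposition \ref{prop:fo} with $H = F(\zeta_q)$ and $X = A$. Since $A \subseteq F(\zeta_q) = H$ is first-order definable over $H$ by hypothesis, the proposition yields that $A \cap F$ is first-order definable over $F$, which is exactly the desired conclusion.

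There is no real obstacle here; the corollary is essentially a restatement of the proposition in the specific case of adjoining a root of unity. The one step that deserves explicit mention is the necessity of the characteristic hypothesis: if $\mathrm{char}(F) = q$, then $x^q - 1 = (x-1)^q$ in $F[x]$ and there is no primitive $q$-th root of unity to adjoin, so the statement would be vacuous or meaningless. With $\mathrm{char}(F) \neq q$ assumed, the extension $F(\zeta_q)/F$ has degree dividing $q-1$ and Proposition \ref{prop:fo} applies directly.
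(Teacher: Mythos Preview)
Your proof is correct and follows exactly the same approach as the paper, which simply remarks that the corollary follows from Proposition~\ref{prop:fo}. You have merely made explicit the one check that the paper leaves implicit, namely that $F(\zeta_q)/F$ is a finite extension.
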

\begin{assumption}
    \label{as:zeta_in_K}
    From now on we assume that $K$ (and therefore ${\bf K}$) contains a primitive $q$-th root of unity and therefore any cyclic extension of degree $q$ is generated by taking a $q$-th root of an element of $K$ (or ${\bf K}$).
\end{assumption}

Recall that Lemma~\ref{le:norm_as_diophantine} defines a diophantine subset of tuples. 
In the material that follows, we will combine this fact with the following lemma to obtain our main results.
\begin{lemma}
\label{le:fo}
    Let $F$ be a field, let $A \subset F^3, B\subset F, C\subset F$ be diophantine subsets of $F^3$ and $F$ respectively.  Let 
    \[
    D=\{x \in F\mid\forall x_1 \in B \forall x_2 \in C: (x_1, x_2,x) \in A\}.
    \]
    Then $D$ is first-order definable over $F$.
\end{lemma}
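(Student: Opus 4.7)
The plan is to translate the definition of $D$ directly into a first-order formula over $F$, using only the hypothesis that $A$, $B$, and $C$ are diophantine. First, I would fix existential representations of the three sets: since $A$ is diophantine in $F^3$, there is a finite system of polynomial equations $R_A(a_1,a_2,a_3,\bar y) = \bar 0$ over $F$ such that $(a_1,a_2,a_3)\in A$ if and only if $\exists \bar y : R_A(a_1,a_2,a_3,\bar y) = \bar 0$, and analogously I would fix systems $R_B(b,\bar z) = \bar 0$ and $R_C(c,\bar w) = \bar 0$ witnessing the diophantine definitions of $B$ and $C$.

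Next, I would substitute these witnesses into the defining condition for $D$. The condition $x\in D$ unfolds to
\[
\forall x_1\in F\ \forall x_2\in F: \ (x_1\in B \wedge x_2\in C) \Rightarrow (x_1,x_2,x)\in A,
\]
and replacing each of $x_1\in B$, $x_2\in C$, and $(x_1,x_2,x)\in A$ by its existential description gives a first-order sentence over $F$ with $x$ as the only free variable. To put this into a clean prenex form I would pull the existential quantifiers out of the antecedent of the implication, where they become universal, and leave the existential quantifier in the consequent in place, arriving at a formula of the shape
\[
\forall x_1\, \forall x_2\, \forall \bar z\, \forall \bar w\, \exists \bar y : \Phi(x, x_1, x_2, \bar z, \bar w, \bar y),
\]
where $\Phi$ is a propositional combination of polynomial equations over $F$. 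This is exactly a first-order definition of $D$ over $F$.

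I do not anticipate a serious obstacle: the argument is entirely formal, relying only on the equivalence between being diophantine and being existentially first-order definable, together with the standard rules for moving existential quantifiers across an implication. The one thing worth recording is the resulting quantifier shape $\forall\forall\exists$, which is in keeping with the $\forall\exists\dots\exists$ style of definitions pursued elsewhere in the paper, and is what makes this lemma the right packaging to combine with Lemma~\ref{le:norm_as_diophantine} in the subsequent sections.
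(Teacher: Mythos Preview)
Your proposal is correct and follows essentially the same approach as the paper: the paper's proof simply rewrites the defining condition as $\forall x_1,x_2\in F:\ (x_1\notin B)\lor(x_2\notin C)\lor((x_1,x_2,x)\in A)$, which is exactly the contrapositive form of your implication before you pass to prenex normal form. Your additional step of extracting the explicit $\forall\cdots\forall\exists\cdots\exists$ quantifier shape is a nice elaboration but not needed for the bare statement of the lemma.
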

\begin{proof}
    Observe that we may write $D$ in the following form:
    $$
    D=\{x \in F\mid \forall x_1,x_2 \in F:((x_1 \not \in B)\lor (x_2 \not \in C) \lor ((x_1,x_2, x ) \in A)\}.
    $$
\end{proof}

\section{Field extensions and the solubility of norm equations}
\label{sec:fields_and_norms}

In this section, we provide the necessary foundational results for analyzing the solubility of norm equations over global function fields and their infinite algebraic extensions.
We choose to work with norm equations and radical extensions because we can exploit the local-global principle for cyclic extensions known as the \emph{Hasse Norm Principle}, which is recalled below. 
Indeed, the cyclic extensions of a field $L$ of degree $q$ are precisely the extensions $L(\sqrt[q]{a})/L$ generated by the $q$-th root of an element $a\in L\setminus L^q$, as long as $L$ is a field containing a primitive $q$-th root of unity and $q$ is distinct from the characteristic of $L$;  see \cite{Lang_Algebra}*{Theorem~VI.6.2}.
In light of Corollary~\ref{cor:zeta_in_K}, we maintain Assumption~\ref{as:zeta_in_K} without loss of generality.
In particular, we assume every field contains a primitive $q$-th root of unity, so every extension generated by a $q$-th root is cyclic.

\begin{theorem}[Hasse Norm Principle, \S9, \cite{Tate65}]
\label{thm:HNP}
Let $L/K$ be a cyclic extension of global function fields.  Let $x \in K$ and let $\calM_L$ be the collection of all valuations of $L$.  Then there exists $y \in L$ such that ${\bf N}_{L/K}(y)=x$ if and only if for all $v \in \calM_L$ we have that there exists $y \in L_v$ such that ${\bf N}_{L_v/K_v}(y)=x$.
\end{theorem}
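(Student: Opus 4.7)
The plan is to deduce the Hasse Norm Principle for a cyclic extension of global function fields from a cohomological vanishing statement, via the standard machinery of Tate cohomology and global class field theory. First I would set $G = \Gal(L/K)$ and let $J_L$ denote the idele group of $L$, with $C_L = J_L/L^\times$ the idele class group. Applying Tate cohomology to the short exact sequence of $G$-modules
\[
1 \to L^\times \to J_L \to C_L \to 1
\]
produces a long exact sequence whose relevant portion reads
\[
\hat H^{-1}(G, C_L) \to \hat H^0(G, L^\times) \to \hat H^0(G, J_L).
\]
Unraveling definitions, $\hat H^0(G, L^\times) = K^\times / N_{L/K}(L^\times)$ while $\hat H^0(G, J_L)$ decomposes as $\bigoplus_v K_v^\times / N_{L_w/K_v}(L_w^\times)$, and the right arrow is precisely simultaneous localization at all places. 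The Hasse Norm Principle is therefore equivalent to the injectivity of this arrow, which follows at once once we establish $\hat H^{-1}(G, C_L) = 0$.

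Next I would exploit the hypothesis that $G$ is cyclic, which is exactly the case in which Tate cohomology is periodic of period $2$, giving $\hat H^{-1}(G, C_L) \cong \hat H^1(G, C_L)$. It thus suffices to prove $\hat H^1(G, C_L) = 0$. The natural way to organize this is to combine the reciprocity isomorphism $\hat H^0(G, C_L) \cong G^{\mathrm{ab}}$, which in particular yields $|\hat H^0(G, C_L)| = |G|$, with the Herbrand-quotient computation $h(C_L) = |G|$. Since $h(C_L) = |\hat H^0(G, C_L)| / |\hat H^1(G, C_L)|$ by definition, the two equalities force $\hat H^1(G, C_L) = 0$.

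The main obstacle is establishing these inputs, both of which are central statements of global class field theory, equivalent in strength to the reciprocity law and the so-called first and second inequalities. A genuinely self-contained proof would require either the axiomatic setup of a class formation or a direct argument for the idele class group of a global function field, which in turn rests on essentially global input such as the analytic behavior of $L$-functions or the structure of divisor classes on the associated curve. For this reason, the standard practice, which the authors follow here, is to cite the general theory rather than reprove this foundational material; the content of Section~\ref{sec:fields_and_norms} then consists of the \emph{applications} of the principle to concrete norm equations that will arise later in the paper.
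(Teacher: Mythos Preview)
Your cohomological sketch is the standard route to the Hasse Norm Principle and is correct in outline, but there is nothing to compare it against: the paper does not prove Theorem~\ref{thm:HNP} at all. It is stated with a citation to \cite{Tate65}, \S9, and used as a black box. You anticipated this yourself in your final paragraph, and that anticipation is accurate --- the authors treat the result as foundational input and devote Section~\ref{sec:fields_and_norms} to applying it, not to establishing it.
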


The Hasse Norm Principle implies that the solubility of a norm equation reduces to inspecting the same equation over local fields.
These local equations, in turn, can be understood with the following well-known result.

\begin{proposition}
\label{prop:localunit}
Let $L,K$ and $v$ be as above and assume $L_v/K_v$ is unramified.  
If~$x\in K_v$ satisfies $\ord_v(x) \equiv 0 \bmod [L_v:K_v]$, then there exists $y \in L_v$ such that ${\bf N}_{L_v/K_v}(y)=x$.
\end{proposition}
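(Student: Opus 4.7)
The plan is to reduce the norm equation to two independent pieces by exploiting the fact that, in an unramified extension of local fields, any uniformizer of $K_v$ is also a uniformizer of $L_v$. More precisely, let $f = [L_v:K_v]$, let $\pi$ be a uniformizer of $K_v$, and write $x = u\pi^{mf}$ with $u \in \OO_{K_v}^\times$, where $m$ exists by the divisibility hypothesis. Then ${\bf N}_{L_v/K_v}(\pi^m) = \pi^{mf}$ since $\pi \in K_v$ is fixed by the Galois group, so it suffices to exhibit a unit $u' \in \OO_{L_v}^\times$ with ${\bf N}_{L_v/K_v}(u') = u$; setting $y = u'\pi^m$ then completes the argument.

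The core step is therefore to show that ${\bf N}_{L_v/K_v} \colon \OO_{L_v}^\times \to \OO_{K_v}^\times$ is surjective whenever $L_v/K_v$ is unramified. I would establish this by analyzing the standard filtration by higher unit groups $U^{(i)} = 1 + \pp^i$. The induced map on the bottom quotient $\OO_{L_v}^\times/U^{(1)} \to \OO_{K_v}^\times/U^{(1)}$ is precisely the norm map $\kappa_{v_L}^\times \to \kappa_{v_K}^\times$ between residue fields, which are finite fields in our setting; this is surjective because the norm of a generator of the cyclic group $\kappa_{v_L}^\times$ has order exactly $|\kappa_{v_K}^\times|$. For the induced maps on the successive quotients $U^{(i)}/U^{(i+1)}$, unramifiedness identifies each quotient with the additive group of the residue field extension, and the norm becomes the trace, which is surjective for separable extensions of finite fields. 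A successive approximation argument, using completeness of $L_v$, then lifts a compatible sequence of approximate preimages to an actual unit $u' \in \OO_{L_v}^\times$ with ${\bf N}_{L_v/K_v}(u') = u$.

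The main (and essentially only) obstacle is the surjectivity of the norm on unit groups, but this is a classical fact about unramified extensions of complete discretely valued fields and is proved in standard references such as Serre's \emph{Local Fields}, so in practice I would invoke it rather than reproduce the filtration argument in full. Once the unit-norm surjectivity is in hand, the argument assembling $y = u'\pi^m$ is a one-line verification using multiplicativity of the norm, completing the proof.
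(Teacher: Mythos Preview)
Your proof is correct and follows essentially the same approach as the paper: decompose $x$ as a unit times a power of a uniformizer, observe that the uniformizer power is trivially a norm, and reduce to the surjectivity of the norm on units in an unramified extension. The paper simply cites this last fact from Lang's \emph{Algebraic Number Theory} (Lemma~IX.3.4), whereas you sketch the standard filtration argument and point to Serre; the content is the same.
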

\begin{proof}
    Let $\pi_{K_v}\in K_v$ be a uniformizer, and write $x = \pi_{K_v}^{\ord_v{x}}x_0$ where $x_0$ is a unit at $v$ and $\ord_v(x) = n[L_v:K_v]$.
    By \cite{Lang94}*{Lemma IX.3.4}, there is $y_0$ in $L_v$ so that ${\bf N}_{L_v/K_v}(y_0) = x_0$. Thus, $x$ is a norm as desired: ${\bf N}_{L_v/K_v}(\pi_{K_v}^ny_0) = \pi_{K_v}^{n[L_v:K_v]}x_0 = x$.
\end{proof}

We will apply these results to analyze equations of the form seen in Equation~\eqref{eq:gen_norm_eq}.
First, we provide a lemma describing the behavior of primes in such $q$-th root extensions. 
This allows us to deduce conditions where norm equations are or are not solvable.
\begin{lemma}
\label{le:split1}
Let $L$ be a global function field of characteristic $p >0$ which contains a primitive $q$-th root of unity where $q \ne p$ is a prime number.
Given $a \in L\setminus L^q$, the following statements are true concerning the extension $L(\sqrt[q]{a})/L$.
\begin{enumerate}[(a)]
    \item \label{it:1} $L(\sqrt[q]{a})/L$ is a Galois extension of degree $q$.
    \item \label{it:2} If $\ord_{\pp_L}a=0$ then $\pp_L$ splits (completely) in $L(\sqrt[q]{a})$ if there exists $u \in L$ such that $\ord_{\pp_L}(a-u^q)>0$, and is inert otherwise.
    \item \label{it:3} If $\pp_L$ is a prime of $L$, then $\pp_L$ is ramified if and only if $\ord_{\pp_L}a \not \equiv 0 \bmod q$.
\end{enumerate}
\end{lemma}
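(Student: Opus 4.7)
For part (a), the plan is to invoke Kummer theory directly. To show $[L(\sqrt[q]{a}):L] = q$, I would argue that $f(x) = x^q - a$ is irreducible over $L$: any proper factor has degree $d$ with $1 \le d < q$ and constant term of the form $\pm\zeta_q^s(\sqrt[q]{a})^d$, which must lie in $L$. Since $\zeta_q \in L$ by Assumption~\ref{as:zeta_in_K}, this gives $(\sqrt[q]{a})^d \in L$; combining this with $(\sqrt[q]{a})^q = a \in L$ via a Bezout relation $md + nq = 1$ forces $\sqrt[q]{a} \in L$, contradicting $a \notin L^q$. The extension is Galois because all roots $\zeta_q^i \sqrt[q]{a}$ already lie in $L(\sqrt[q]{a})$.

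For part (c), I would inspect valuations. Let $\pp_{L'}$ be a prime of $L' := L(\sqrt[q]{a})$ above $\pp_L$ with ramification index $e$, and set $\alpha = \sqrt[q]{a}$. From $\alpha^q = a$ we read off $q\, v_{L'}(\alpha) = e\, v_L(a)$, hence $q \mid e\, v_L(a)$. If $v_L(a) \not\equiv 0 \bmod q$ then $q \mid e$, and since $e \le [L':L] = q$ by (a), we get $e = q$, so $\pp_L$ is totally ramified. For the converse, if $v_L(a) = kq$, writing $a = \pi_L^{kq} b$ with $b$ a unit gives $L(\sqrt[q]{a}) = L(\sqrt[q]{b})$, and $b \notin L^q$ since $\pi_L^{kq} \in L^q$; this reduces the unramifiedness claim to the case handled in part~(b).

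For part (b), assume $\ord_{\pp_L}(a) = 0$. In the forward direction, given $u \in L$ with $\ord_{\pp_L}(a-u^q) > 0$, note $u$ is a unit at $\pp_L$, and work in the completion $L_{\pp_L}$. Apply Hensel's Lemma to $f(x) = x^q - a$ at the approximate root $u$: since $q \neq p$, the derivative $f'(u) = q u^{q-1}$ is a unit, so Hensel produces a genuine root of $f$ in $L_{\pp_L}$. Because $L_{\pp_L}$ contains $\zeta_q$, Kummer theory implies $f$ splits completely over $L_{\pp_L}$, giving complete splitting of $\pp_L$ in $L'$. For the converse, suppose no such $u$ exists; then the reduction $\bar a$ is not a $q$-th power in $\kappa_{\pp_L}$. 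Since $p \ne q$, the element $\zeta_q$ reduces to a primitive $q$-th root of unity in $\kappa_{\pp_L}$, so $q$ divides $|\kappa_{\pp_L}^\times|$ and the $q$-th powers form a proper subgroup of index $q$; this forces $x^q - \bar a$ to be irreducible in $\kappa_{\pp_L}[x]$. Thus the residue field extension at any $\pp_{L'} \mid \pp_L$ has degree $q$, so $f(\pp_{L'}/\pp_L) = q$, and combined with $e(\pp_{L'}/\pp_L) = 1$ from part (c), we conclude $\pp_L$ is inert.

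The main delicate point is checking that $\kappa_{\pp_L}$ genuinely contains a nontrivial $q$-th root of unity in the converse direction of (b), which is where the assumption $q \ne p$ is essential; otherwise one cannot convert non-$q$-th-power status of $\bar a$ into irreducibility of $x^q - \bar a$ over the residue field. The rest is careful bookkeeping of ramification, splitting, and the interplay between (b) and (c).
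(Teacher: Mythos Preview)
Your argument is essentially correct and follows the same overall strategy as the paper: part (a) via Kummer theory, part (c) by a valuation computation together with a reduction to the unit case, and part (b) by local analysis in the completion and residue field. Your explicit use of Hensel's Lemma in the forward direction of (b) is actually cleaner than the paper's treatment, which argues via the contrapositives.

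There is, however, one presentational circularity you should repair. In the converse of (b) you invoke part (c) to obtain $e(\pp_{L'}/\pp_L)=1$, but your proof of the converse direction of (c) reduces to part (b); as written, the two claims lean on each other. The fix is immediate: once you have established $f(\pp_{L'}/\pp_L)=q$ from the irreducibility of $x^q-\bar a$ over $\kappa_{\pp_L}$, the inequality $e(\pp_{L'}/\pp_L)\,f(\pp_{L'}/\pp_L)\le [L':L]=q$ already forces $e(\pp_{L'}/\pp_L)=1$, with no appeal to (c) required. With that adjustment your proof of (b) is self-contained, and (c) may then safely cite it.
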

\begin{proof}
\eqref{it:1} In $L(\sqrt[q]{a})$, the polynomial $T^q-a$ splits completely because $L$ contains a primitive $q$-th root of unity.  Therefore, $L(\sqrt[q]{a})/L$ is Galois.

    \eqref{it:2}.  If $\pp_L$ splits completely in the extension $L(\sqrt[q]{a})/L$, then the polynomial $T^q-a$ has $q$ roots in $L_{\pp_L}$, that is $a$ is a $q$-th power in $L_{\pp_L}$ and also in the residue field  $k_{\pp_L}$ of $\pp_L$.  
    Thus, for any lift $u\in L$ of a root of $T^q-a$ in the residue field, we have $a - u^q\equiv 0 \bmod \pp_L$. 
    So, we must have $\ord_{\pp_L}(a-u^q)>0$.  
    Conversely, if there is no $u \in L$ such that $\ord_{\pp_L}(a-u^q)>0$, then  there is no $u \in L_{\pp_L}$ such that $u^q=a$.  
    Hence the polynomial $T^q-a$ remains prime in $L_{\pp_L}$ and the degree of the local extension at $\pp_L$ is $q$.  
    In other words, $\pp_L$ is inert in the extension $L(\sqrt[q]{a})/L$.

\eqref{it:3} 
Let $\pp_{L(\sqrt[q]{a})}$ be a prime of $L(\sqrt[q]{a})$ above $\pp_L$.  
Observe that 
\[
    e(\pp_{L(\sqrt[q]{a})}/\pp_L)\ord_{\pp_L}a
        =\ord_{\pp_{L(\sqrt[q]{a})}}a
        =q\ord_{\pp_{L(\sqrt[q]{a})}}\sqrt[q]{a}.
\]
Therefore, if $\ord_{\pp_L}a \not \equiv 0 \bmod q$, we have that $e(\pp_{L(\sqrt[q]{a})}/\pp_L) \equiv 0 \bmod q$ and $\pp_L$ is totally ramified in the extension $L(\sqrt[q]{a})/L$.
Conversely, if $\ord_{\pp_L}a \equiv 0 \bmod q$, then we define $w = \pi^{\frac{1}{q}\ord_{\pp_L}a}$ where $\pi$ is a local uniformizing parameter of $L$ at $\pp_L$. 
We deduce that $q\ord_{\pp_L}w=\ord_{\pp_L}a$  and $L(\sqrt[q]{a})=L(\sqrt[q]{aw^{-q}})$.
Then the element under the $q$-th root has order 0 at $\pp_K$.
Now by Part \eqref{it:2}, it follows that $\pp_K$ does not ramify in the extension $L(\sqrt[q]{a})/L$.
\end{proof}

Now we present two results which establish scenarios when norm equations do or do not have solutions, respectively.
First, we provide the key application of the Hasse Norm Principle for the purposes of our paper.

\begin{proposition}
\label{prop:hasse_application}
Let $L$ be a global function field of characteristic $p >0$ which contains a primitive $q$-th root of unity where $q \ne p$ is a prime number.
    Let $a \in L$  
    be an element whose divisor over $L$ is the $q$-th power of another $L$-divisor.  
    Let $z \in L$ be such that for every prime $\pp_L$ of $L$ one of the following holds:
    \begin{enumerate}[(a)]
        \item $\pp_L$ splits completely in the extension $L(\sqrt[q]{a})/L$;\label{it:hasse_app_split}
        \item $\ord_{\pp_L}z \equiv 0 \bmod q$.\label{it:hasse_app_0_q}
    \end{enumerate} 
    Under these assumptions,  $z$ is an $L(\sqrt[q]{a})$-norm in $L$.
\end{proposition}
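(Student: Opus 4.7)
The plan is to apply the Hasse Norm Principle (Theorem~\ref{thm:HNP}) to the cyclic extension $L(\sqrt[q]{a})/L$. First, I would dispense with the trivial case: if $a \in L^q$, then $L(\sqrt[q]{a}) = L$, the norm map is the identity, and $y = z$ solves the equation. So I may assume $a \notin L^q$, and by Lemma~\ref{le:split1}\eqref{it:1}, $L(\sqrt[q]{a})/L$ is a cyclic extension of prime degree $q$, placing us in the setting of the Hasse Norm Principle.

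Next I would extract the crucial global consequence of the hypothesis that the divisor of $a$ is a $q$-th power: this says $\ord_{\pp_L}(a) \equiv 0 \bmod q$ for every prime $\pp_L$ of $L$. By Lemma~\ref{le:split1}\eqref{it:3}, this means that no prime $\pp_L$ of $L$ ramifies in $L(\sqrt[q]{a})/L$. Since $[L(\sqrt[q]{a}):L]=q$ is prime, the only two possibilities for the behavior of a prime $\pp_L$ are then splitting completely or remaining inert.

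To conclude via Theorem~\ref{thm:HNP}, I would verify that $z$ is a local norm at every prime $\pp_L$. Fix $\pp_L$ and a prime $\pp_{L(\sqrt[q]{a})}$ above it, and consider the local extension $L(\sqrt[q]{a})_{\pp_{L(\sqrt[q]{a})}}/L_{\pp_L}$. If $\pp_L$ splits completely (which occurs in case~\eqref{it:hasse_app_split}, and possibly also in case~\eqref{it:hasse_app_0_q}), then the local extension is trivial, so $z$ is trivially a local norm. Otherwise $\pp_L$ is inert, forcing us into case~\eqref{it:hasse_app_0_q}: we have $\ord_{\pp_L}(z) \equiv 0 \bmod q$ and the local degree equals $q$, so $\ord_{\pp_L}(z) \equiv 0 \bmod [L(\sqrt[q]{a})_{\pp_{L(\sqrt[q]{a})}}:L_{\pp_L}]$. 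Since the local extension is unramified, Proposition~\ref{prop:localunit} then gives that $z$ is a local norm at $\pp_L$. With the local condition verified at every prime, the Hasse Norm Principle delivers the global conclusion.

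The proof is short and has no real obstacle: its key substantive input is that the divisor hypothesis on $a$ rules out ramification everywhere, which is precisely what is needed to reduce cases~\eqref{it:hasse_app_split} and~\eqref{it:hasse_app_0_q} to the two clean scenarios handled by triviality of the local extension and by Proposition~\ref{prop:localunit}, respectively.
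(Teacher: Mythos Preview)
Your proposal is correct and follows essentially the same approach as the paper: dispose of the trivial case $a\in L^q$, use the divisor hypothesis with Lemma~\ref{le:split1}\eqref{it:3} to rule out ramification, and then verify local solvability at every prime via Proposition~\ref{prop:localunit} so that the Hasse Norm Principle applies. The only cosmetic difference is that the paper treats the split and inert cases uniformly by noting the local degree is $1$ or $q$ and invoking Proposition~\ref{prop:localunit} in both, whereas you handle the split case directly as trivial; either way the argument is the same.
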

\begin{proof}
    If $a$ is a $q$-th power, then the extension is trivial and the claim is automatically satisfied because ${\bf N}_{L/L}(z)=z$. Thus, we assume that $a$ is not a $q$-th power, which means that the extension is cyclic by Lemma~\ref{le:split1}.\eqref{it:1}.

    By the Hasse Norm Principle (Theorem~\ref{thm:HNP}), the norm equation ${\bf N}_{L(\sqrt[q]{a})/L}(y)=z$ has a solution $y\in L(\sqrt[q]{a})$ if and only if the equation is everywhere locally solvable. We obtain solubility locally everywhere by applying Proposition~\ref{prop:localunit}, as follows.
    
  Fix a prime $\pp_L$ of $L$ and write $M = L(\sqrt[q]{a})$. Since the divisor of $a$ is the $q$-th power of another $L$-divisor, the extension $M/L$ is unramified by Lemma \ref{le:split1}.\eqref{it:3}.  
  Let $\pp_{M}$ be a prime of $M$ lying over the prime $\pp_L$ of $L$. 
  Note that $[M_{\pp_{M}} : L_{\pp_L}] = q$ unless $\pp_L$ is split completely in $M/L$, in which case $[M_{\pp_{M}} : L_{\pp_L}] = 1$. 
  Therefore, $\ord_{\pp_L}z \equiv 0 \bmod [M_{\pp_{M}} : L_{\pp_L}]$ under either of the hypotheses on $z$, and consequently the norm equation is locally solvable at $\pp_L$ by Proposition~\ref{prop:localunit}.
\end{proof}

As a complement to the preceding proposition, we have the following simple observation. 
In practice, the condition that the prime is inert can be verified with the Lemma~\ref{le:split1}.\eqref{it:2}.

\begin{lemma}
\label{le:not_norm}
Let $L$ be a global function field of characteristic $p >0$ which contains a primitive $q$-th root of unity where $q \ne p$ is a prime number.
Fix $a\in L\setminus L^q$ and $z\in L$. 
Let $\pp_L$ be a prime of $L$ which is inert in the extension $L(\sqrt[q]{a})/L$.
If $\ord_{\pp_L}z \not \equiv 0 \bmod q$, then $z$ is not an $L(\sqrt[q]{a})$-norm in $L$.  
\end{lemma}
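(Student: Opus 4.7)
The plan is to derive a contradiction by computing $\ord_{\pp_L}(z) \bmod q$ under the assumption that $z$ lies in the image of the norm map. The key input is that inertness forces the local extension at $\pp_L$ to be unramified of degree $q$, after which the norm of any element must have order divisible by $q$ at $\pp_L$.

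First, I would observe that the extension $L(\sqrt[q]{a})/L$ is cyclic of degree $q$ by Lemma~\ref{le:split1}.\eqref{it:1}, since $a \notin L^q$ and $L$ contains a primitive $q$-th root of unity. Next, since $\pp_L$ is inert (in particular unramified), Lemma~\ref{le:split1}.\eqref{it:3} tells us that $\ord_{\pp_L}(a) \equiv 0 \bmod q$; hence the extension is unramified at $\pp_L$, with a unique prime $\Pp$ of $L(\sqrt[q]{a})$ above $\pp_L$ satisfying $e(\Pp/\pp_L) = 1$ and $f(\Pp/\pp_L) = q$.

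Now suppose, for contradiction, that $z = {\bf N}_{L(\sqrt[q]{a})/L}(y)$ for some $y \in L(\sqrt[q]{a})$. Passing to divisors, the norm of the divisor of $y$ equals the divisor of $z$, and the contribution of $\Pp$ to $\mathrm{div}(z)$ is ${\bf N}(\Pp^{\ord_{\Pp}(y)}) = \pp_L^{f(\Pp/\pp_L)\,\ord_{\Pp}(y)} = \pp_L^{q\,\ord_{\Pp}(y)}$. Since $\Pp$ is the only prime of $L(\sqrt[q]{a})$ lying above $\pp_L$, this is the entire $\pp_L$-component of $\mathrm{div}(z)$, so
\[
\ord_{\pp_L}(z) \;=\; q \cdot \ord_{\Pp}(y) \;\equiv\; 0 \bmod q,
\]
contradicting the hypothesis that $\ord_{\pp_L}(z) \not\equiv 0 \bmod q$.

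There is no real obstacle here — the argument is a direct computation of the effect of the norm map on valuations in the inert (hence unramified, degree $q$) case. The only point requiring a moment of care is ruling out ramification at $\pp_L$, which is exactly what Lemma~\ref{le:split1}.\eqref{it:3} provides once we know the prime is inert (and in particular not ramified).
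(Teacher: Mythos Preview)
Your proof is correct and follows essentially the same approach as the paper: both argue that at an inert prime the residue degree is $q$, so the valuation of any norm at $\pp_L$ is divisible by $q$. Your version spells out the divisor computation in more detail, though the detour through Lemma~\ref{le:split1}.\eqref{it:3} is unnecessary --- ``inert'' already means $e=1$ and $f=q$ by definition, so you need not recover unramifiedness from the order of $a$.
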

\begin{proof}
If $\pp_{L(\sqrt[q]{a})}$ lies above $\pp_L$ in $L$, then $f(\pp_{L(\sqrt[q]{a})}/\pp_L)=q$.  Thus, if $z \in L$ is an $L(\sqrt[q]{a})$-norm, we have  $\ord_{\pp_L}z \equiv 0 \bmod q$.
\end{proof}

We now turn our attention to infinite algebraic extensions of global function fields.
In order to apply methods for global fields to these larger fields, we require the following two results. 
The first is a straightforward observation which implies that any norm equation which is solvable over a global field will continue to be solvable over any further extension, including infinite extensions. 
The second result is more delicate, but can be intuitively viewed as stating that non-solvability of norm equations in an infinite degree extension is a phenomenon which can be observed in a certain finite subextension.

\begin{lemma}
\label{le:sol_down_to_sol_up}
    Let ${\bf L}/E$ be a (possibly infinite) extension of fields which are algebraic over $\F_p(t)$, and fix $a\in E$. For any $z\in E(\sqrt[q]{a})$, there is $z'\in {\bf L}(\sqrt[q]{a})$ so that 
    $$
        {\bf N}_{E(\sqrt[q]{a})/E}(z) = {\bf N}_{{\bf L}(\sqrt[q]{a})/{\bf L}}(z').
    $$
\end{lemma}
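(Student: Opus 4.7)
The plan is to do a simple case analysis based on whether $a$ is a $q$-th power in ${\bf L}$, and in both cases exhibit $z'$ explicitly.

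First I would handle the trivial case in which $a \in {\bf L}^q$. Then ${\bf L}(\sqrt[q]{a}) = {\bf L}$ and the norm ${\bf N}_{{\bf L}(\sqrt[q]{a})/{\bf L}}$ is the identity map, so we may simply set $z' = {\bf N}_{E(\sqrt[q]{a})/E}(z) \in E \subseteq {\bf L}$ and the equality holds tautologically. Note this case subsumes the situation where $a \in E^q$, since then both extensions collapse.

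The main case is when $a \notin {\bf L}^q$, which in particular forces $a \notin E^q$. By Lemma~\ref{le:split1}\eqref{it:1} together with Assumption~\ref{as:zeta_in_K}, both $E(\sqrt[q]{a})/E$ and ${\bf L}(\sqrt[q]{a})/{\bf L}$ are cyclic of degree $q$, with Galois groups generated by the automorphism $\sigma$ sending $\sqrt[q]{a} \mapsto \zeta_q\sqrt[q]{a}$ and fixing the respective base fields. I would then take $z' = z$ viewed inside the inclusion $E(\sqrt[q]{a}) \subseteq {\bf L}(\sqrt[q]{a})$. Writing $z = \sum_{i=0}^{q-1} d_i \sqrt[q]{a}^i$ with $d_i \in E$, both norms are computed by the same product formula
\[
{\bf N}(z) \;=\; \prod_{j=0}^{q-1} \sigma^j(z) \;=\; \prod_{j=0}^{q-1}\left(\sum_{i=0}^{q-1} d_i\, \zeta_q^{ij}\, \sqrt[q]{a}^i\right),
\]
because the generator $\sigma$ acts identically on the expression whether we view $z$ over $E$ or over ${\bf L}$ (the coefficients $d_i$ lie in $E$, so they are fixed either way). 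Hence the two norms coincide and $z' = z$ works.

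There is no serious obstacle here; the only subtle point is that $a$ might become a $q$-th power in ${\bf L}$ even though it is not one in $E$, which is precisely why the case split above is needed. Once that is noted, the lemma reduces to the observation that the formula for the norm of a cyclic degree-$q$ extension generated by $\sqrt[q]{a}$ depends only on the expression of $z$ in the basis $\{1, \sqrt[q]{a}, \dots, \sqrt[q]{a}^{q-1}\}$ and on the action of $\sigma$, neither of which is affected by enlarging the base field from $E$ to ${\bf L}$.
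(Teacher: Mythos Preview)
Your proof is correct and follows essentially the same approach as the paper: a case split on whether $\sqrt[q]{a}\in{\bf L}$, taking $z'=z$ when the extension ${\bf L}(\sqrt[q]{a})/{\bf L}$ is genuinely of degree $q$ and $z'={\bf N}_{E(\sqrt[q]{a})/E}(z)$ when it collapses. One minor point: your citation of Lemma~\ref{le:split1}\eqref{it:1} is technically for global function fields, whereas $E$ and ${\bf L}$ here may be infinite over $\F_p(t)$; the paper instead cites \cite[Theorem~VI.9.1]{Lang_Algebra} for the fact that the conjugates of $\sqrt[q]{a}$ over ${\bf L}$ coincide with those over $E$, which applies to arbitrary fields.
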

\begin{proof}
    There are two cases. If $\sqrt[q]{a}\not\in {\bf L}$, then the conjugates of $\sqrt[q]{a}$ over ${\bf L}$ are the same as the conjugates over $E$; see \cite{Lang_Algebra}*{Theorem~VI.9.1}. In particular, the claim follows after choosing $z' = z$, since the norm of $z$ (with respect to either extension) is simply expressed in terms of the conjugates of $\sqrt[q]{a}$.

    Otherwise, if $\sqrt[q]{a}\in {\bf L}$, then we have a trivial extension ${\bf L}(\sqrt[q]{a})={\bf L}$ and therefore ${\bf N}_{{\bf L}(\sqrt[q]{a})/{\bf L}}(z') =z'$ for all $z'$. Thus, we may take $z' ={\bf N}_{E(\sqrt[q]{a})/E}(z)$.
\end{proof}

\begin{proposition}
    \label{prop:finandinfin}
Consider a tower of algebraic extensions $\F_p(t)\subseteq E \subseteq {\bf K}\subseteq {\bf L}$ where ${\bf K}/E$ is infinite and  ${\bf L}/{\bf K}$ is generated over ${\bf K}$ by elements $\beta_1,\ldots, \beta_s \in {\bf L}$.
If $\alpha$ is algebraic over ${\bf L}$ and $z \in {\bf L}(\alpha)$, then there are finite extensions $\hat E/E$ and $L/\hat E$ satisfying the following conditions.
\begin{enumerate}
    \item $\hat E \subset {\bf K}$,
    \item $L \subset {\bf L}$,
    \item  $L={\hat E}(\beta_1,\dots,\beta_s)$ and $[{\hat E}(\beta_j):\hat E]=[{\bf K}(\beta_j):{\bf K}]$ for all $j$,
    \item   $\alpha$ has the same monic irreducible polynomial over ${\bf L}$ as over $L$.
    \item There is an equality of norms
\begin{equation*}
    {\mathbf N}_{{\bf L(\alpha)}/{\bf L}}(z)
    ={\mathbf N}_{{ L(\alpha)}/{ L}}(z).
\end{equation*}
\end{enumerate}

\end{proposition}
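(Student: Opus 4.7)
The plan is to build $\hat{E}$ by successively enlarging a finite extension of $E$ inside $\mathbf{K}$ so that it absorbs all of the finite algebraic data governing the minimal polynomials of $\beta_1,\dots,\beta_s$ over $\mathbf{K}$ and of $\alpha$ over $\mathbf{L}$, as well as the polynomial representation of $z$ in terms of $\alpha$. Once enough data has been captured by $\hat{E}$, conditions (1)--(4) will hold almost by construction and condition (5) will be forced because the norm is a universal polynomial expression in the coefficients that are now shared by the two settings.

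First, for each $j=1,\dots,s$, let $h_j(T)\in\mathbf{K}[T]$ be the monic minimal polynomial of $\beta_j$ over $\mathbf{K}$. Writing $\mathbf{K}$ as the union of its finite subextensions of $E$, I would choose a finite extension $\hat{E}_0\subset\mathbf{K}$ of $E$ containing every coefficient of every $h_j$. Since each $h_j\in\hat{E}_0[T]$ remains irreducible in the larger ring $\mathbf{K}[T]$, it is \emph{a fortiori} irreducible over $\hat{E}_0$, so $h_j$ is still the minimal polynomial of $\beta_j$ over $\hat{E}_0$. This forces $[\hat{E}_0(\beta_j):\hat{E}_0]=[\mathbf{K}(\beta_j):\mathbf{K}]$, and by the same reasoning this equality persists under any further enlargement of $\hat{E}_0$ inside $\mathbf{K}$.

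Next, I would handle the polynomial data for $\alpha$ and $z$. Let $\mu(T)\in\mathbf{L}[T]$ be the monic minimal polynomial of $\alpha$ over $\mathbf{L}$, and write $z=Q(\alpha)$ with $Q(T)\in\mathbf{L}[T]$ of degree less than $\deg\mu$. Each coefficient of $\mu$ and of $Q$ lies in $\mathbf{L}=\mathbf{K}(\beta_1,\dots,\beta_s)$ and is therefore expressible using finitely many elements of $\mathbf{K}$ together with $\beta_1,\dots,\beta_s$. I would enlarge $\hat{E}_0$ to a finite extension $\hat{E}\subset\mathbf{K}$ containing all such $\mathbf{K}$-elements, and then set $L=\hat{E}(\beta_1,\dots,\beta_s)$. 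Then $\mu,Q\in L[T]$, so in particular $z\in L(\alpha)$, and since irreducibility in $\mathbf{L}[T]$ descends to the subring $L[T]$, the polynomial $\mu$ is also the monic minimal polynomial of $\alpha$ over $L$. This establishes condition (4), while conditions (1), (2), (3) are immediate from the construction.

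Finally, condition (5) should come for free: the norm $\mathbf{N}_{F(\alpha)/F}(Q(\alpha))$ is the universal expression $\prod_{i=1}^n Q(\alpha_i)$, where $\alpha_1=\alpha,\dots,\alpha_n$ are the roots of $\mu$ in a fixed algebraic closure (equivalently, a resultant of $\mu$ and $Q$). Since the coefficients of $\mu$ and $Q$ are literally the same whether regarded over $L$ or over $\mathbf{L}$, both norms evaluate to the same element of $L\subseteq\mathbf{L}$. I do not anticipate a serious obstacle beyond the bookkeeping needed to keep $\hat{E}/E$ finite while simultaneously absorbing the finitely many $\mathbf{K}$-elements appearing in the coefficients of $h_j$, $\mu$, and $Q$; the rest is formal.
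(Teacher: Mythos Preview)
Your proposal is correct and follows essentially the same approach as the paper: both collect the finitely many $\mathbf{K}$-coefficients needed to express the minimal polynomials of the $\beta_j$ over $\mathbf{K}$, the minimal polynomial of $\alpha$ over $\mathbf{L}$, and the coordinates of $z$, then define $\hat E$ by adjoining these to $E$ and set $L=\hat E(\beta_1,\dots,\beta_s)$. The paper is slightly more explicit in fixing a spanning set $\Omega=\{\prod_j\beta_j^{a_{i,j}}\}$ for $\mathbf{L}/\mathbf{K}$ to name those $\mathbf{K}$-coefficients concretely, but your phrasing ``expressible using finitely many elements of $\mathbf{K}$ together with $\beta_1,\dots,\beta_s$'' amounts to the same thing, and your justification of condition~(5) via the resultant/conjugate-product formula matches the paper's.
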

Before starting the proof, we present a diagram of the fields used in Propostion~\ref{prop:finandinfin}.
$$
\begin{tabular}{c}
\xymatrix{ 
{L(\alpha)} \ar[r]^{\text{infinite}} & {\bf L}(\alpha) \\
{L} \ar[u]^{\text{finite}}\ar[r]^{\text{infinite}} & {\bf L}\ar[u]_{\text{finite}} \\
{\hat E} \ar[u]^{\text{finite}} \ar[r]^{\text{infinite}} & {\bf K}\ar[u]_{\text{finite}}\\
{E} \ar[u]^{\text{finite}} \ar[ur]_{\text{infinite}} & 
             }\\
\end{tabular}
$$
\begin{proof}
Let $r_j:=[{\bf K(\beta_j)}:{\bf K}]$ and define $r = r_1\dots r_s$. Let $d_{0,j}+d_{1,j}T +\ldots + T^{r_j}, d_{i,j} \in {\bf K}$ be the monic irreducible polynomial of $\beta_j, j=1, \ldots,s$ over ${\bf K}$.
Let $\ell:=[{\bf L}(\alpha):{\bf L}]$, let $g_0+g_1T +\ldots +T^{\ell}, g_i \in {\bf L}$ be the monic irreducible polynomial of $\alpha$ over ${\bf L}$.
Let $\omega_i=\prod_{j=1}^s\beta_j^{a_{i,j}}$, where $i=1,\ldots,r$, $a_{i,j} \in \{0,\ldots,r_j-1\}$ and all $s$-tuples $(a_{i,1},\ldots,a_{i,s})$ are distinct.
Then $\Omega=\{\omega_1,\ldots,\omega_{r}\}$ is a spanning set for ${\bf L}/{\bf K}$. 
(Note that $\Omega$ will be a basis for ${\bf L}/{\bf K}$ if $[{\bf K(\beta_j)} : {\bf K}] = [{\bf K(\beta_1,\dots, \beta_j)} : {\bf K}(\beta_1,\dots, \beta_{j-1})]$ for all $j$; see \cite{Lang_Algebra}*{Proposition~V.1.2}.)

With this notation, we may write each $g_i$ as
$$
    g_i = \sum_{j=1}^{r}g_{i,j}\omega_j.
$$
for $ g_{i,j} \in {\bf K}$.  

Let $z=\sum_{i=0}^{\ell-1}h_i\alpha^i,$ where $h_i \in {\bf L}$.  
As before, write 
$$
    h_i=\sum_{j=1}^{r}h_{i,j}\omega_j,
$$ where $h_{i,j} \in {\bf K}$.

Define $\hat E=E(d_{0,1},\ldots, d_{r_s-1,s}, g_{0,1},\ldots, g_{\ell-1,r}, h_{0,0},\ldots,h_{\ell-1,r})$ and $L=\hat E(\beta_1,\dots, \beta_s)$.
We claim that $\hat E$ and $L$ thus defined satisfy the requirements of the lemma.
\begin{enumerate}
\item $\hat E \subset {\bf K}$ since all elements adjoined to $E$ are in ${\bf K}$.
\item Since $L=\hat E(\beta_1,\dots,\beta_s) $ and each $\beta_j \in {\bf L}$, we have that $L \subset {\bf K}(\beta_1,\dots, \beta_s)={\bf L}$.
\item Observe that the minimal polynomial $d_{0,j}+d_{1,j}T +\ldots + T^{r_j}$ of $\beta_j$ over ${\bf K}$ has coefficients in ${\hat E}$, so it must also be the minimal polynomial of $\beta$ over $\hat E$. 
Therefore $[{\hat E}(\beta_j):\hat E]=[{\bf K}(\beta_j):{\bf K}]$ for all $j$.
\item Similarly to the previous part, $g_0+g_1T +\ldots +T^{\ell}$ is the minimal polynomial of $\alpha$ over $L$ because the coefficients are in $L$.
\item Recall $z =\sum_{i=0}^{\ell-1}h_i\alpha^i$ where 
$h_i\in L$ for all $i$, so $z\in L(\alpha)$ and we observe
\[
{\mathbf N}_{L(\alpha)/L}(z)=\prod_{j=1}^{[L(\alpha):L]}\left (\sum_{i=0}^{[L(\alpha):L]-1}h_i\alpha_j^i\right )={\mathbf N}_{{\bf L(\alpha)}/{\bf L}}(z),
\]
where $\alpha_1:=\alpha, \ldots,\alpha_{[L(\alpha):L]}$ are the conjugates of $\alpha$ over $L$. The final equality comes from the fact that these are also the conjugates of $\alpha$ over ${\bf L}$.
\end{enumerate}
\end{proof}

\section{Integrality at primes in global function fields}
\label{sec:global}
Throughout this section, let $K$ be a global function field.
Recall that the Hasse Norm Principle, which we will use below, only applies to cyclic extensions.
Therefore, we assume throughout the section that $K$ contains a primitive $q$-th root of unity, which implies that every extension generated by a $q$-th root is indeed cyclic.
However, we remind the reader that the definability results remain true even if $K$ does not contain this root of unity; see Corollary~\ref{cor:zeta_in_K}. 

\subsection{Radical extensions and the behavior of primes.}
Recall that, given the base global field field $K$, we are pursuing definability via norm equations over a larger field $L$ which is generated over $K$ by some $q$-th roots, as in \eqref{eq:gen_norm_eq}.
We use the freedom of choosing $L$ to ensure that both  Proposition~\ref{prop:hasse_application} and Lemma~\ref{le:not_norm} are applicable, meaning that we have control over which norm equations are solvable and which are not.
Our method exploits Lemma~\ref{le:split1} to control the behavior of primes in $q$-th root extensions, as needed.
The next three lemmas are building blocks which will be combined when defining $L$ in the norm equations that follow.

\begin{lemma}
\label{le:ramify}
    Fix an element $z \in K^\times$ and define $L=K(\sqrt[q]{1+z^{-1}})$. The following are true.
    \begin{enumerate}[(a)]
        \item  If $\pp_K$ is a prime of $K$ such that $\ord_{\pp_K}z<0$ and $\ord_{\pp_K}z \not \equiv 0 \bmod q$, then the prime $\pp_K$ splits completely in $L$. In particular, if $\pp_L$ is a factor of $\pp_K$ in $L$, then $\ord_{\pp_L}z<0$ and $\ord_{\pp_L}z \not \equiv 0 \bmod q$.
        \item For any prime $\ttt_L$ of $L$ such that $\ord_{\ttt_L}z\geq0$ we have that $\ord_{\ttt_L}z\equiv 0 \bmod q$. 
    \end{enumerate} 
\end{lemma}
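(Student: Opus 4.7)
The plan is to treat the two parts separately, with part (a) reducing to Lemma~\ref{le:split1}.\eqref{it:2} and part (b) following from a direct manipulation of the defining equation $\alpha^q = 1 + z^{-1}$ where $\alpha = \sqrt[q]{1+z^{-1}}$.

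For part (a), first I would compute $\ord_{\pp_K}(1+z^{-1})$. Since $\ord_{\pp_K}(z^{-1}) = -\ord_{\pp_K}(z) > 0 = \ord_{\pp_K}(1)$, the strong form of the ultrametric inequality in Definition~\ref{def:val}.(5) gives $\ord_{\pp_K}(1+z^{-1}) = 0$. Taking $u = 1 \in K$ in Lemma~\ref{le:split1}.\eqref{it:2}, we have $\ord_{\pp_K}((1+z^{-1}) - u^q) = \ord_{\pp_K}(z^{-1}) > 0$, so if $1+z^{-1}$ is not a $q$-th power in $K$, then $\pp_K$ splits completely in $L$; the case where $1+z^{-1} \in K^q$ is trivial since then $L = K$. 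In either case, for any prime $\pp_L$ of $L$ above $\pp_K$ we have $e(\pp_L/\pp_K) = 1$, and therefore $\ord_{\pp_L}(z) = \ord_{\pp_K}(z)$, which yields the second assertion.

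For part (b), set $\alpha = \sqrt[q]{1+z^{-1}}$, so that $\alpha^q - 1 = z^{-1}$ in $L$. Assume $\ord_{\ttt_L}(z) \geq 0$. If $\ord_{\ttt_L}(z) = 0$, we are done. Otherwise $\ord_{\ttt_L}(z) > 0$, so $\ord_{\ttt_L}(z^{-1}) < 0 = \ord_{\ttt_L}(1)$. Using Definition~\ref{def:val}.(5) again, this forces
\[
\ord_{\ttt_L}(\alpha^q) = \ord_{\ttt_L}\bigl((\alpha^q - 1) + 1\bigr) = \ord_{\ttt_L}(\alpha^q - 1) = \ord_{\ttt_L}(z^{-1}) = -\ord_{\ttt_L}(z).
\]
Since $\ord_{\ttt_L}(\alpha^q) = q\,\ord_{\ttt_L}(\alpha)$, we conclude $\ord_{\ttt_L}(z) = -q\,\ord_{\ttt_L}(\alpha) \equiv 0 \bmod q$, as required.

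Neither part presents a real obstacle: both rest on the ultrametric inequality and on the earlier analysis of $q$-th root extensions in Lemma~\ref{le:split1}. The only subtlety worth flagging is the bookkeeping around the trivial case where $1+z^{-1}$ happens to already be a $q$-th power in $K$, which must be handled to invoke Lemma~\ref{le:split1}.\eqref{it:2} cleanly. Everything else is a short valuation-theoretic calculation.
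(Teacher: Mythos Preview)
Your proof is correct. Part (a) matches the paper's argument essentially verbatim: both observe $1+z^{-1}\equiv 1\bmod \pp_K$ and invoke Lemma~\ref{le:split1}\eqref{it:2}, handling the degenerate case $L=K$ separately.

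Part (b) is where your approach diverges from the paper's. The paper pushes down to the prime $\ttt_K$ of $K$ below $\ttt_L$, observes that $\ord_{\ttt_K}(1+z^{-1})\not\equiv 0\bmod q$ whenever $\ord_{\ttt_K}z>0$ and $\ord_{\ttt_K}z\not\equiv 0\bmod q$, and then applies Lemma~\ref{le:split1}\eqref{it:3} to conclude that $\ttt_K$ is totally ramified, so $\ord_{\ttt_L}z=q\,\ord_{\ttt_K}z$. You instead work entirely in $L$, reading the divisibility directly off the relation $\alpha^q=1+z^{-1}$ and the ultrametric inequality. Your route is slightly more elementary---it bypasses the ramification lemma and, unlike the paper, does not require a separate case analysis for $L=K$---though in substance you are inlining the computation that underlies Lemma~\ref{le:split1}\eqref{it:3}. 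The paper's version has the minor advantage of keeping the splitting/ramification picture explicit, which is reused in later lemmas of the same section.
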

\begin{proof}
We first consider the case where $L=K$.  The first assertion of the lemma is clearly true in this case.  To see that the second part is also true,  note that $L=K$ if and only if there exists $u \in K$ such that $u^q=(1+z^{-1})$.  In this case, $1+z^{-1}$ has order 0 mod $q$ at all primes.  In particular, if some prime $\ttt_K$ is a zero of $z$, then $\ord_{\ttt}z=-\ord_{\ttt}(1+z^{-1})\equiv 0 \mod q$.  

We now assume that $K\ne L$.  Since $1+ z^{-1} \equiv 1 \mod \pp_K$, by Lemma \ref{le:split1}.\eqref{it:2} we have that $\pp_K$ splits completely in the extension $L/K$.  Therefore, for every factor $\pp_L$ of $\pp_K$ in $L$ we have that $e(\pp_L/\pp_K)=f(\pp_L/\pp_K)=1$. We conclude that $\ord_{\pp_L}z=\ord_{\pp_K}z\not \equiv 0 \bmod q$.

Let $\ttt_K$ be a $K$-prime such that $\ord_{\ttt_K}z>0$ and assume $\ord_{\ttt_K}z \not \equiv 0 \bmod q$.  
Then $\ord_{\ttt_K}(1+z^{-1})<0$ and $\ord_{\ttt_K}(1+z^{-1}) \not \equiv 0 \bmod q$.  Therefore, $\ttt_K$ ramifies totally in the extension $L/K$  by Lemma~\ref{le:split1}.\eqref{it:3}. Thus, $\ord_{\ttt_L}z=q\ord_{\ttt_K}z \equiv 0 \bmod q$ for any $L$-prime $\ttt_L$ lying over $\ttt_K$.
\end{proof}

\begin{lemma}
\label{le:split2}
   Fix $a, w \in K^\times$ and assume that for any prime $\aaa_K$ occurring in the divisor of $a$, we have $\ord_{\aaa_K}w \geq 0$ and $\ord_{\aaa_K}w \equiv 0\bmod q$.  
   If $L=K(\sqrt[q]{1+(a+a^{-1})w^{-1}})$, then the following are true.
   \begin{enumerate}[(a)]   
       \item If $\pp_K$ is a prime of $K$ such that $\ord_{\pp_K}w<0$ and $\ord_{\pp_K}w \not \equiv 0 \bmod q$, then $\pp_K$ splits completely in $L$.
       In particular, for any $L$-factor $\pp_L$ of $\pp_K$ we have that $\ord_{\pp_L}w <0$ and $\ord_{\pp_L}w \not \equiv 0 \bmod q$.\label{it:split2:a}
        \item For any prime $\ttt_L$ of $L$ we have that $\ord_{\ttt_L}a \equiv 0\bmod q$. \label{it:split2:b}
   \end{enumerate}
   \end{lemma}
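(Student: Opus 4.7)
The plan is to mimic the proof of Lemma \ref{le:ramify}, with $(a+a^{-1})w^{-1}$ playing the role of $z^{-1}$. The essential tool is again Lemma \ref{le:split1}, which pins down when primes split or ramify in a $q$-th root extension. The hypothesis that $\ord_{\aaa_K}w \geq 0$ and $\ord_{\aaa_K}w \equiv 0 \bmod q$ at every $\aaa_K$ in the divisor of $a$ is calibrated so that the extra term $a^{-1}$ in $a+a^{-1}$ does not disturb order-mod-$q$ calculations.

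For part \eqref{it:split2:a}, I would first note that $\ord_{\pp_K}w<0$ together with the hypothesis excludes $\pp_K$ from the divisor of $a$, so $\ord_{\pp_K}a = 0$ and $\ord_{\pp_K}(a+a^{-1}) \geq 0$. Therefore $\ord_{\pp_K}((a+a^{-1})w^{-1}) > 0$, which makes $1+(a+a^{-1})w^{-1} \equiv 1 \bmod \pp_K$. Applying Lemma \ref{le:split1}.\eqref{it:2} with $u = 1$ then shows that $\pp_K$ splits completely in $L$ (handling $L = K$ trivially as in Lemma \ref{le:ramify}). Since complete splitting forces $e(\pp_L/\pp_K) = f(\pp_L/\pp_K) = 1$, we get $\ord_{\pp_L}w = \ord_{\pp_K}w$, which yields the stated conclusion.

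For part \eqref{it:split2:b}, I would fix $\ttt_L$ with restriction $\ttt_K$ to $K$. If $\ttt_K$ is not in the divisor of $a$ then $\ord_{\ttt_L}a = 0$ and we are done, so assume $\ttt_K$ is a zero or pole of $a$. The hypothesis then gives $\ord_{\ttt_K}w \geq 0$ and $\ord_{\ttt_K}w \equiv 0 \bmod q$. The key claim is: if $\ord_{\ttt_K}a \not\equiv 0 \bmod q$, then $\ttt_K$ ramifies totally in $L/K$, so that $\ord_{\ttt_L}a = q\,\ord_{\ttt_K}a \equiv 0 \bmod q$. To check this via Lemma \ref{le:split1}.\eqref{it:3}, I would do a quick case analysis on the sign of $\ord_{\ttt_K}a$: in both the zero and pole cases one finds $\ord_{\ttt_K}(a+a^{-1}) = -|\ord_{\ttt_K}a|$, which is negative and incongruent to $0$ modulo $q$. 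Combined with $\ord_{\ttt_K}w \equiv 0 \bmod q$, this gives $\ord_{\ttt_K}((a+a^{-1})w^{-1}) < 0$ and $\not\equiv 0 \bmod q$, and the strong triangle inequality transfers the same property to $\ord_{\ttt_K}(1+(a+a^{-1})w^{-1})$.

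The main obstacle is simply the case analysis in part \eqref{it:split2:b}, where the presence of $a^{-1}$ introduces an extra term whose order could in principle spoil the congruence modulo $q$. The assumption that $w$ has order divisible by $q$ at every prime in the divisor of $a$ is exactly what is needed to absorb this contribution, so once the case analysis is set up, everything follows as cleanly as in Lemma \ref{le:ramify}.
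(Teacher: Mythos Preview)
Your proposal is correct and follows essentially the same route as the paper's proof: both parts reduce to the same residue computation $1+(a+a^{-1})w^{-1}\equiv 1\bmod\pp_K$ for \eqref{it:split2:a} and the same order computation $\ord_{\ttt_K}(a+a^{-1})=-|\ord_{\ttt_K}a|$ for \eqref{it:split2:b}, with Lemma~\ref{le:split1} doing the rest. The only point to make explicit in \eqref{it:split2:b} is the degenerate case $L=K$ (which you flag in \eqref{it:split2:a} but not in \eqref{it:split2:b}): there, $1+(a+a^{-1})w^{-1}$ is a $q$-th power, so its order at $\ttt_K$ is $\equiv 0\bmod q$, forcing $\ord_{\ttt_K}a\equiv 0\bmod q$ directly---the paper handles this case separately, just as in Lemma~\ref{le:ramify}.
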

   
   \begin{proof}
   For the first part, we assume that $K \ne L$ because the claim is trivial otherwise. 
Observe that $1+(a+a^{-1})w^{-1}\equiv 1 \bmod \pp_K$.  
Therefore in the extension $L/K$ the prime $\pp_K$ splits completely by Lemma~\ref{le:split1}.\eqref{it:2}.  
Hence $\ord_{\pp_L}w=\ord_{\pp_K}w$, 
proving \eqref{it:split2:a}.

Let $\ttt_K$ be the prime of $K$ below $\ttt_L$.
It suffices to consider the case when $\ttt_K$ is a zero or pole of $a$ in $K$, in which case it is a pole of $a+a^{-1}$. We observe $\ord_{\ttt_K}(a+a^{-1})=-|\ord_{\ttt_K}a|$.
By assumption, $\ord_{\ttt_K}w \geq 0$, so $\ttt_K$ is also a pole of $(a+a^{-1})w^{-1}$
Moreover, because $\ord_{\ttt_K}w\equiv 0\bmod q$, we obtain
\[
-|\ord_{\ttt_K}a| \equiv \ord_{\ttt_K}(a+a^{-1})w^{-1} = \ord_{\ttt_K}(1+(a+a^{-1})w^{-1})\bmod q
\]
If $L=K$, then $1+(a+a^{-1})w^{-1}$ is a $q$-th power, and therefore has order divisible by $q$ at all primes. 
Otherwise, if $L\neq K$, then we have $\ord_{\ttt_L}a = e(\ttt_L/\ttt_K)\ord_{\ttt_K}a $ and we have $e(\ttt_L/\ttt_K) =q$ whenever $-|\ord_{\ttt_K}a| \equiv \ord_{\ttt_K}(1+(a+a^{-1})w^{-1}) \not \equiv 0 \bmod q$ by Lemma~\ref{le:split1}.\eqref{it:3}. 
In either case, we have that $\ord_{\ttt_L}a \equiv 0 \bmod q$.
   \end{proof}

\begin{lemma}
\label{le:pole}
Fix $x,b \in K^\times$, and let $\pp_K$ be a prime of $K$.  Assume $\ord_{\pp_K}b<0$ and $\ord_{\pp_K}b \not \equiv 0 \bmod q$. The following are true.
\begin{enumerate}[(a)]
    \item Unconditionally, $\ord_{\pp_K}(bx^q+b^q)<0.$
    \item If either $\ord_{\pp_K} x \geq 0$ or $q\ord_{\pp_K} x+\ord_{\pp_K}b < q\ord_{\pp_K}b$, then  \[
 (\ord_{\pp_K}x<0) 
 \iff (\ord_{\pp_K}(bx^q+b^q)\not \equiv 0 \bmod q).
 \]
\end{enumerate}
\end{lemma}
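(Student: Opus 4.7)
The plan is to introduce the shorthand $v := \ord_{\pp_K}$ and extract a single key observation from the hypothesis $v(b) \not\equiv 0 \pmod q$: the two terms $bx^q$ and $b^q$ have different valuations. Indeed $v(bx^q) = v(b) + qv(x) \equiv v(b) \pmod q$, while $v(b^q) = qv(b) \equiv 0 \pmod q$, so these values cannot agree. Hence the strong triangle inequality (Definition \ref{def:val}.(5)) applies with equality and yields
\[
v(bx^q + b^q) = \min\bigl(v(bx^q),\, v(b^q)\bigr).
\]
This single identity is what drives both parts.

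For part (a), I would simply note that $\min(v(bx^q), v(b^q)) \le v(b^q) = qv(b) < 0$ since $v(b)<0$, so $v(bx^q+b^q)<0$ as claimed.

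For part (b), I would split into the two cases that arise from the minimum:
\begin{itemize}
\item If $v(bx^q) > v(b^q)$, equivalently $qv(x) > (q-1)v(b)$, then $v(bx^q+b^q) = qv(b) \equiv 0 \pmod q$.
\item If $v(bx^q) < v(b^q)$, equivalently $qv(x) + v(b) < qv(b)$, then $v(bx^q+b^q) = v(b)+qv(x) \equiv v(b) \not\equiv 0 \pmod q$.
\end{itemize}
Then I would check that each hypothesis of part (b) selects one of these cases and matches the corresponding side of the equivalence. Namely, if $v(x)\ge 0$ then $qv(x) \ge 0 > (q-1)v(b)$ (using $v(b)<0$), so the first case applies, $v(bx^q+b^q)\equiv 0\pmod q$, and also $v(x)<0$ fails --- both sides of the biconditional are false. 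If instead $qv(x)+v(b) < qv(b)$, the second case applies directly, giving $v(bx^q+b^q)\not\equiv 0 \pmod q$; and from $qv(x) < (q-1)v(b) < 0$ one reads off $v(x)<0$, so both sides are true.

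There is no real obstacle here; the only subtlety is recognizing that the non-divisibility hypothesis on $v(b)$ is precisely what eliminates the cancellation case $v(bx^q) = v(b^q)$ and lets us read off $v(bx^q+b^q)$ exactly as a minimum. Once that is noted, both parts are a matter of bookkeeping with the two inequalities $qv(x) \gtrless (q-1)v(b)$.
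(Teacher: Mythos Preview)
Your proof is correct and follows essentially the same approach as the paper: both first observe that $\ord_{\pp_K}(bx^q) \neq \ord_{\pp_K}(b^q)$, deduce $\ord_{\pp_K}(bx^q+b^q) = \min(\ord_{\pp_K}(bx^q),\ord_{\pp_K}(b^q))$, and then read off the two parts by case analysis. Your congruence argument making explicit \emph{why} the two valuations differ (they are distinct modulo $q$) is in fact cleaner than the paper's phrasing, which simply asserts the inequality ``given our assumptions.''
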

\begin{proof}
Observe that, given our assumptions on the orders of $x$ and $b$, we have that $\ord_{\pp_K}bx^q \ne \ord_{\pp_K}b^q$.  
Thus, $\ord_{\pp_K}(bx^q+b^q)=\min(\ord_{\pp_K}bx^q,\ord_{\pp_K}b^q)<0$.

If $\ord_{\pp_K}x <0$, then, given our assumptions on the orders of $x$ and $b$  at $\pp_K$, we have that 
$$
    \ord_{\pp_K}(bx^q+b^q)=\ord_{\pp_K}bx^q = q\ord_{\pp_K} x +\ord_{\pp_K}b \not \equiv 0 \bmod q.
$$
Conversely, if $\ord_{\pp_K}x \geq 0$ then $\ord_{\pp_K}(bx^q+b^q)=\ord_{\pp_K}b^q  \equiv 0 \bmod q$.
\end{proof}

\subsection{Defining valuation rings in global function fields}
As we have mentioned in the history section of the introduction, the existential definability of valuation rings was first established by Rumely in \cite{Rumely80}.  In this section we produce a different definition using a ``floating'' base for the norm equations.  We then adapt our definition to apply in infinite extensions and for the purposes of defining integral closures of rings of $\calS$-integers. The goal of this subsection is to construct this alternative form of a definition of valuation rings via norm equations.

To begin, we define the set of elements that have a large order relative to some element $b$. 
This leads to the definability of the valuation ring by choosing $b$ to have order precisely $-1$; see Corollary~\ref{cor:val_ring_global}.
\begin{proposition}
\label{prop:L2/L1}
Fix $a,b \in K^\times$. Let $\pp_K$ be a $K$-prime, and assume the following.
\be
    \item $\ord_{\pp_K}b<0$ and $\ord_{\pp_K}b \not \equiv 0 \bmod q$;
    \item $a$ is a unit at $\pp_K$ and $a$ is not a $q$-th power modulo $\pp_K$;
    \item $a \equiv 1$ modulo any prime $\bb_K \ne \pp_K$ such that $\ord_{\bb_K}b\ne 0$.
\ee
Given $x\in K^\times$, define $L_1=K(\sqrt[q]{1+(bx^q+b^q)^{-1}})$ and
$L_2=L_1(\sqrt[q]{(1+(a+a^{-1})b^{-1})}$.
Under the assumptions on $a$ and $b$, the norm equation
$$
    {\bf N}_{{L_2(\sqrt[q]{a})}/L_2}(y)=bx^q+b^q
$$
 has a solution $y\in L_2(\sqrt[q]{a})$ if and only if $\ord_{\pp_K} x > \frac{q-1}{q}\ord_{\pp_K}b$.
\end{proposition}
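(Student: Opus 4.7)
The plan is to analyze the arithmetic of the prime $\pp_K$ in the tower $K \subset L_1 \subset L_2 \subset L_2(\sqrt[q]{a})$, and then apply Proposition~\ref{prop:hasse_application} for the ``solution exists'' direction and Lemma~\ref{le:not_norm} for the ``no solution'' direction. The foundational observation is a dichotomy at $\pp_K$: writing $m = \ord_{\pp_K} b$ and $n = \ord_{\pp_K} x$, since $m \not\equiv 0 \bmod q$, the orders $\ord_{\pp_K}(bx^q) = m+qn$ and $\ord_{\pp_K}(b^q) = qm$ cannot be equal. Therefore $\ord_{\pp_K}(bx^q+b^q) = \min(m+qn,\, qm)$, and the hypothesis $n > \frac{q-1}{q}m$ is equivalent to $\ord_{\pp_K}(bx^q+b^q) = qm \equiv 0 \bmod q$, while its negation gives $\ord_{\pp_K}(bx^q+b^q) \equiv m \not\equiv 0 \bmod q$.

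Next I will trace $\pp_K$ up through $L_1$ and $L_2$ in both regimes. Since $\ord_{\pp_K}(bx^q+b^q)<0$ by Lemma~\ref{le:pole}, we have $1+(bx^q+b^q)^{-1}\equiv 1 \bmod \pp_K$, so Lemma~\ref{le:split1}.\eqref{it:2} (with $u=1$) shows $\pp_K$ splits completely in $L_1/K$. Applying Lemma~\ref{le:split2}.\eqref{it:split2:a} over $L_1$ with $w=b$, the factor $\pp_{L_1}$ splits completely in $L_2/L_1$. Verifying the hypothesis of Lemma~\ref{le:split2} requires showing $\ord_{\aaa_{L_1}}b \equiv 0 \bmod q$ at primes in the divisor of $a$, which follows from hypotheses (2) and (3) of the proposition: (2) rules out $\pp_K$ from the divisor of $a$, while (3) combined with $a \equiv 1 \bmod \bb_K$ shows $a$ is a unit at every other prime where $b$ has nonzero order, so any prime in the divisor of $a$ satisfies $\ord b = 0$. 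Consequently, the residue field at $\pp_{L_2}$ coincides with that at $\pp_K$, so hypothesis (2) implies $a$ is a non-$q$-th-power unit modulo $\pp_{L_2}$. Applying Lemma~\ref{le:split1}.\eqref{it:2} once more shows $\pp_{L_2}$ is inert in $L_2(\sqrt[q]{a})/L_2$; this also forces $a \not\in L_2^q$, so the extension has degree $q$ and the norm equation is nontrivial.

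The ``no solution'' direction now follows immediately from Lemma~\ref{le:not_norm}, since $\ord_{\pp_{L_2}}(bx^q+b^q) = \ord_{\pp_K}(bx^q+b^q) \not\equiv 0 \bmod q$. For the ``has solution'' direction I will apply Proposition~\ref{prop:hasse_application} over $L_2$. Hypothesis (a), that the divisor of $a$ is a $q$-th power in $L_2$, follows from Lemma~\ref{le:split2}.\eqref{it:split2:b} applied to $L_2/L_1$. Hypothesis (b) requires, at each prime $\qq_{L_2}$, either complete splitting in $L_2(\sqrt[q]{a})/L_2$ or that $\ord(bx^q+b^q)$ be divisible by $q$. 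At the factor of $\pp_K$ the dichotomy gives divisibility $qm \equiv 0 \bmod q$. For primes $\qq_K \neq \pp_K$, I split into three exhaustive cases. First, if $\ord_{\qq_K}(bx^q+b^q) \geq 0$, then Lemma~\ref{le:ramify}(b) gives $\ord_{\qq_{L_1}}(bx^q+b^q) \equiv 0 \bmod q$, which lifts to $L_2$. Second, if $\ord_{\qq_K}(bx^q+b^q)<0$ and $\ord_{\qq_K}b = 0$, then $\ord_{\qq_K}(b^q)=0$ forces $\ord_{\qq_K}(bx^q+b^q) = q\ord_{\qq_K}x \equiv 0 \bmod q$, again lifting. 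Third, if $\ord_{\qq_K}b \neq 0$, then hypothesis (3) gives $a\equiv 1\bmod\qq_K$, so Lemma~\ref{le:split1}.\eqref{it:2} with $u=1$ shows any factor $\qq_{L_2}$ splits completely in $L_2(\sqrt[q]{a})/L_2$.

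The main obstacle will be performing the case analysis cleanly at non-$\pp_K$ primes, verifying the hypothesis of Lemma~\ref{le:split2} as we ascend the tower, and tracking how complete splitting and divisibility by $q$ of orders propagate upward. Hypotheses (2) and (3) are calibrated precisely for these verifications: (2) ensures $\pp_{L_2}$ is inert in $L_2(\sqrt[q]{a})/L_2$ (blocking solutions when $n$ is too small), while (3) neutralizes all other primes where $b$ has nonzero order by forcing them to split in $L_2(\sqrt[q]{a})/L_2$.
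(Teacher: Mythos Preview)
Your proof is correct and follows essentially the same approach as the paper's: both use Lemma~\ref{le:ramify} and Lemma~\ref{le:split2} to trace $\pp_K$ through the tower and establish inertness of $\pp_{L_2}$ in $L_2(\sqrt[q]{a})/L_2$, then invoke Lemma~\ref{le:not_norm} for the non-solvable direction and Proposition~\ref{prop:hasse_application} with the same case analysis (zeros handled by Lemma~\ref{le:ramify}, poles away from $\pp_K$ split according to whether $b$ is a unit there) for the solvable direction. Your treatment is in fact slightly more explicit than the paper's in verifying the hypothesis of Lemma~\ref{le:split2} (that $\ord_{\aaa_{L_1}} b = 0$ at primes in the divisor of $a$), which the paper leaves implicit.
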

\begin{proof}
    First, note that it is impossible to have $\ord_{\pp_K} x = \frac{q-1}{q}\ord_{\pp_K}b$ because we have $\ord_{\pp_K}b\not\equiv 0\bmod q$ by hypothesis. Thus, we consider only the strict inequalities.
    
    Suppose that $\ord_{\pp_K}x <\frac{q-1}{q}\ord_{\pp_K}b <0$.  
    By Lemma \ref{le:pole}, we see  $\ord_{\pp_K}(bx^q+b^q)<0$ and $\ord_{\pp_K}(bx^q+b^q)\not\equiv 0 \bmod q$.  
    Therefore, $\pp_K$ splits completely in $L_2$ by applying Lemma~\ref{le:ramify} and \ref{le:split2} in succession.
    Hence if $\pp_{L_2}$ is an $L_2$-factor of $\pp_K$, then $\ord_{\pp_{L_2}}(bx^q+b^q)\not\equiv 0 \bmod q$ and $a$ is not a $q$-th power in the residue field of $\pp_{L_2}$.
    This implies that $\pp_{L_2}$ is inert in the extension ${L_2(\sqrt[q]{a})}/L_2$ by Lemma \ref{le:split1}.\eqref{it:2}, and $bx^q+b^q$ is not an ${L_2(\sqrt[q]{a})}$-norm in $L_2$ by Lemma~\ref{le:not_norm}.

    Now suppose now that $\ord_{\pp_K}x > \frac{q-1}{q}\ord_{\pp_K}b$.
    Observe that the $L_2$-divisor of $a$ is the $q$-th power of another divisor by Lemma \ref{le:split2}.
    Thus, we may use the application of the Hasse Norm Principle given in Proposition~\ref{prop:hasse_application} to show that the norm equation has a solution. 
    In particular, it suffices to show that $bx^q + b^q$ has order divisible by $q$ at every prime which does not split completely in ${L_2(\sqrt[q]{a})}/L_2$. 
    
    Clearly, we only need to check the order of $bx^q + b^q$ primes which are zeros and poles of $bx^q + b^q$, since the order of $bx^q + b^q$ is 0 at all other primes. To start,  the order of every zero of $bx^q+b^q$ in $L_1$ is divisible by $q$ by Lemma \ref{le:ramify}.  For the distinguished pole $\pp_K$, the hypothesis $\ord_{\pp_K}x > \frac{q-1}{q}\ord_{\pp_K}b$ implies 
     $$
     \ord_{\pp_K}(bx^q+b^q) = \min\{\ord_{\pp_K}(bx^q),\ord_{\pp_K}(b^q)\}=\ord_{\pp_K}(b^q) \equiv 0 \bmod q.
     $$

    To finish the proof, consider a pole $\cc_K$ of $bx^q + b^q$ in $K$ which is different from $\pp_K$.
    If $\cc_K$ is a zero or pole of $b$, then every factor $\cc_{L_2}$ of $\cc_K$ in $L_2$ splits completely in $L_2(\sqrt[q]{a})$ by Lemma~\ref{le:split1}.\eqref{it:2} because $a\equiv 1\bmod \cc_K$ by hypothesis.
    Otherwise, $\ord_{\cc_K}b = 0$ and $\ord_{\cc_K}x < 0$. 
    We conclude that
    $$
        \ord_{\aaa_K}(bx^q + b^q) = \ord_{\aaa_K}x^q \equiv 0 \bmod q.
    $$
This finishes the casework, so the norm equation is solvable by Proposition~\ref{prop:hasse_application}.
\end{proof}

Although we are ultimately concerned with defining rings in extensions of $\F_p(t)$ which are possibly infinite, we pause to deduce a simple corollary regarding valuation rings of the global field $K$ itself.
The following lemma provides the existence of the necessary elements $a$ and $b$ for applying the preceding proposition in the case of global fields.

\begin{lemma}
\label{le:exists}
    For any $\pp_K$ there exists a pair $(a,b) \in K^2$ such that $\ord_{\pp_K}b=-1$, $a \equiv 1$ modulo any prime occurring in the divisor of $b$ not equal to $\pp_K$ and $a$ is not a $q$-th power modulo $\pp_K$.
\end{lemma}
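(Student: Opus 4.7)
The plan is to construct $b$ and $a$ essentially independently: first produce any $b \in K$ with $\ord_{\pp_K} b = -1$, then invoke weak approximation to obtain $a$ with the required local behavior at the finitely many primes appearing in the divisor of $b$. Since $\pp_K$ is a discrete valuation of $K$, there exists a uniformizer $\pi \in K$ with $\ord_{\pp_K} \pi = 1$, and I set $b := \pi^{-1}$, so $\ord_{\pp_K} b = -1$. The divisor of $b$ has finite support, so I enumerate the primes other than $\pp_K$ appearing in it as $\qq_1, \ldots, \qq_r$.

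The only substantive ingredient is producing a non-$q$-th power in the residue field $\kappa_{\pp_K}$. Assumption~\ref{as:zeta_in_K} provides a primitive $q$-th root of unity $\zeta_q \in K$, and since $q \neq p = \mathrm{char}(K)$, the polynomial $T^q - 1$ remains separable after reduction at $\pp_K$, so the image of $\zeta_q$ is a primitive $q$-th root of unity in $\kappa_{\pp_K}$. Consequently $q \mid |\kappa_{\pp_K}^\times|$, so the subgroup of $q$-th powers has index $q$ in $\kappa_{\pp_K}^\times$, and I may select some $\bar\alpha \in \kappa_{\pp_K}^\times$ that is not a $q$-th power. Lift $\bar\alpha$ to a representative $\alpha \in \OO_{\pp_K} \subset K$.

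Finally, I apply weak approximation for the global function field $K$ at the finite set of primes $\{\pp_K, \qq_1, \ldots, \qq_r\}$ to produce $a \in K$ satisfying $\ord_{\pp_K}(a - \alpha) \geq 1$ and $\ord_{\qq_i}(a - 1) \geq 1$ for each $i$. Then $a$ reduces to $\bar\alpha$ at $\pp_K$, so $a$ is a unit at $\pp_K$ whose class is a non-$q$-th power; moreover $a \equiv 1$ modulo each $\qq_i$, which are precisely the primes in the divisor of $b$ other than $\pp_K$. I do not anticipate any real obstacle beyond the residue-field observation, which is exactly the payoff of the standing Assumption~\ref{as:zeta_in_K} combined with $q \neq p$.
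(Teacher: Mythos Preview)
Your proof is correct and follows essentially the same approach as the paper's: construct $b$ with a simple pole at $\pp_K$, then use weak approximation at the finitely many primes in the divisor of $b$ to build $a$. Your construction of $b$ as $\pi^{-1}$ is slightly more direct than the paper's (which invokes weak approximation for $b$ as well), and you supply the justification---via Assumption~\ref{as:zeta_in_K} and $q\mid |\kappa_{\pp_K}^\times|$---for the existence of a non-$q$-th power in the residue field, a point the paper leaves implicit.
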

\begin{proof}

Let $\pi_{\pp_K}$ be the local uniformizing parameter for $\pp_K$.  By the Weak Approximation Theorem, there is $b\in K$  so that $\ord_{\pp_K}(b^{-1}-\pi_{\pp_K}) > 1$, hence $\ord_{\pp_K}b=-1$; see the Weak Approximation Theorem \cite[Theorem I.1.1]{Lang94}. 

Let $\qq_1, \ldots, \qq_r$ be all the primes occurring in the divisor of $b$ and not equal to $\pp_K$. Let $w \in K$ be such that $\ord_{\pp_K}w=0$ and the equivalence class of $w$ in the residue field of $\pp_K$ is not a $q$-th power.  By applying the Weak Approximation Theorem again, there exists an element $a \in K$ such that $\ord_{\pp_K}(a-w) >0$ and  $\ord_{\qq_i}(a-1)>0, i=1,\ldots, r$.  Then $a$ satisfies the requirements stated in the lemma.
\end{proof}

By putting together the preceding proposition and lemma, we deduce the following.
\begin{corollary}
\label{cor:val_ring_global}
    There exists $a,b\in K^\times$ so that the valuation ring $\OO_{\pp_K}$ of $K$ at $\pp_K$ is defined in $K$ by the following diophantine formula:
    \begin{equation}
        \exists y \in L_2(\sqrt[q]{a}) \ {\bf N}_{{L_2(\sqrt[q]{a})}/L_2}(y)=bx^q+b^q.
        \label{eq:cor:val_ring_global}
    \end{equation}
\end{corollary}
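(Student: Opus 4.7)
The plan is to directly combine Lemma~\ref{le:exists}, Proposition~\ref{prop:L2/L1}, and Lemma~\ref{le:norm_as_diophantine} with the specific choice $\ord_{\pp_K}b=-1$.

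First, apply Lemma~\ref{le:exists} to the prime $\pp_K$ to produce a pair $(a,b) \in K^2$ with $\ord_{\pp_K}b = -1$, such that $a$ is a unit at $\pp_K$ which is not a $q$-th power modulo $\pp_K$, and $a \equiv 1$ modulo every prime $\neq \pp_K$ in the divisor of $b$. These are precisely the three hypotheses needed to invoke Proposition~\ref{prop:L2/L1}, since $\ord_{\pp_K}b = -1 \not\equiv 0 \bmod q$.

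Next, with these $a$ and $b$ fixed, Proposition~\ref{prop:L2/L1} tells us that the norm equation \eqref{eq:cor:val_ring_global} has a solution $y \in L_2(\sqrt[q]{a})$ if and only if
\[
\ord_{\pp_K}x > \tfrac{q-1}{q}\ord_{\pp_K}b = -\tfrac{q-1}{q}.
\]
Since $\ord_{\pp_K}x$ is an integer and $-\tfrac{q-1}{q}$ lies strictly between $-1$ and $0$, this inequality is equivalent to $\ord_{\pp_K}x \geq 0$, i.e. $x \in \OO_{\pp_K}$. (We handle $x = 0$ separately as $\ord_{\pp_K}(0) = \infty$, or simply note that $x = 0$ trivially satisfies the norm equation by taking $y = b$, since ${\bf N}_{L_2(\sqrt[q]{a})/L_2}(b) = b^q$.)

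Finally, to see that \eqref{eq:cor:val_ring_global} is a diophantine formula over $K$, apply Lemma~\ref{le:norm_as_diophantine} with the parameters $c_1, c_2, c_3$ chosen to encode the three successive $q$-th root extensions used to build $L_2(\sqrt[q]{a})$ from $K$: namely $c_1 = 1+(bx^q+b^q)^{-1}$ (defining $L_1/K$), $c_2 = 1+(a+a^{-1})b^{-1}$ (defining $L_2/L_1$), and $c_3 = a$ (defining $L_2(\sqrt[q]{a})/L_2$). Strictly speaking one should first clear denominators so that $c_1$ is polynomial in $x$, but this is cosmetic. Since $a$ and $b$ are fixed constants of $K$, the resulting formula is diophantine in the single variable $x$, and its solution set in $K$ is exactly $\OO_{\pp_K}$. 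The main subtlety is simply verifying that the inequality from Proposition~\ref{prop:L2/L1} collapses to integrality when $\ord_{\pp_K}b = -1$, which motivates the particular normalization provided by Lemma~\ref{le:exists}.
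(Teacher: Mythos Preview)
Your proof is correct and follows essentially the same approach as the paper: invoke Lemma~\ref{le:exists} to produce $(a,b)$ with $\ord_{\pp_K}b=-1$, apply Proposition~\ref{prop:L2/L1} so that solvability of the norm equation becomes $\ord_{\pp_K}x>-\tfrac{q-1}{q}$, i.e.\ $\ord_{\pp_K}x\geq 0$, and cite Lemma~\ref{le:norm_as_diophantine} for the diophantine shape. Your added remarks on the case $x=0$ and on specializing the $c_i$ in Lemma~\ref{le:norm_as_diophantine} are reasonable elaborations but not substantive departures from the paper's argument.
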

\begin{proof}
Recall that formula \eqref{eq:cor:val_ring_global} represents a diophantine formula over $K$ by Lemma~\ref{le:norm_as_diophantine}.
By Lemma~\ref{le:exists}, there is a pair $(a,b)\in K^2$ satisfying the requirements of Proposition~\ref{prop:L2/L1} such that $\ord_{\pp_K}b = -1$.
Thus, the formula 
\eqref{eq:cor:val_ring_global}
    defines the set of all $x\in K$ which satisfy $\ord_{\pp_K}x \geq \lceil-\frac{q-1}{q}\rceil = 0$ by Proposition~\ref{prop:L2/L1}.
\end{proof}

\subsection{Defining rings of integral functions in global function fields}

To finish this section, we provide a definition of the ring of 
$\calS$-integral functions in global function fields in Proposition~\ref{prop:finite}.
The two lemmas below will also be used  to prove the definability of rings of $\calS$-integral functions in $q$-bounded fields; see Proposition~\ref{thm:infinite}.

\begin{lemma}
\label{le:nosolution}
 Let $\pp_K$ be a $K$-prime and assume the elements $a,b,x\in K^\times$ satisfy the following properties.
 \begin{enumerate}
     \item $\ord_{\pp_K}b \not \equiv 0 \bmod q$ and $\ord_{\pp_K}b <0$;
     \item $a$ is a unit at $\pp_K$ and is not a $q$-th power modulo $\pp_K$
 \end{enumerate}
 Define $L_1=K(\sqrt[q]{1+(bx^q+b^q)^{-1}})$, $L_3=L_1(\sqrt[q]{1+x^{-1}})$, and $L_4=L_3(\sqrt[q]{(1+(a+a^{-1})x^{-1})}$.
If $\ord_{\pp_K}x<\frac{q-1}{q}\ord_{\pp_K}b$, then
 the norm equation below is not solvable over $L_4(\sqrt[q]{a})$:
    \[
    {\mathbf N}_{L_4(\sqrt[q]{a})/L_4}(y)=bx^q+b^q.
    \]    
\end{lemma}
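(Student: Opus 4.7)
The plan is to locate a single prime $\pp_{L_4}$ of $L_4$ above $\pp_K$ at which (i) $\ord_{\pp_{L_4}}(bx^q+b^q)\not\equiv 0\bmod q$ and (ii) $\pp_{L_4}$ is inert in $L_4(\sqrt[q]{a})/L_4$. Lemma~\ref{le:not_norm} will then immediately rule out a solution. The strategy is to show that $\pp_K$ splits completely (or extends trivially) at every stage of the tower $K\subseteq L_1\subseteq L_3\subseteq L_4$, so that a suitable factor $\pp_{L_4}$ inherits the residue field $\kappa_{\pp_K}$ and the valuations of $x$, $b$, and $a$ unchanged.

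To start the climb at the ground level, I would rewrite the hypothesis $\ord_{\pp_K}x<\frac{q-1}{q}\ord_{\pp_K}b$ as $q\ord_{\pp_K}x+\ord_{\pp_K}b<q\ord_{\pp_K}b$, putting us in the setting of Lemma~\ref{le:pole}(b); since $\ord_{\pp_K}x<0$, the conclusion is that $\ord_{\pp_K}(bx^q+b^q)<0$ and is not divisible by $q$. Lemma~\ref{le:ramify}(a), applied with $z=bx^q+b^q$, then shows that $\pp_K$ splits completely in $L_1/K$, so some factor $\pp_{L_1}$ has $\ord_{\pp_{L_1}}x=\ord_{\pp_K}x<0$ and residue field $\kappa_{\pp_K}$. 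Since $\ord_{\pp_{L_1}}x^{-1}>0$, the element $1+x^{-1}$ reduces to $1^q$ modulo $\pp_{L_1}$, so Lemma~\ref{le:split1}(b) gives that $\pp_{L_1}$ splits completely in $L_3/L_1$ (or the extension collapses), producing a factor $\pp_{L_3}$ with the same residue field and the same valuations of $x$, $b$, and $a$. Because $a$ is a unit at $\pp_{L_3}$ while $\ord_{\pp_{L_3}}x^{-1}>0$, the element $1+(a+a^{-1})x^{-1}$ is again $\equiv 1^q \bmod \pp_{L_3}$, so the same argument shows $\pp_{L_3}$ splits completely in $L_4/L_3$ (or collapses) to a factor $\pp_{L_4}$ with residue field $\kappa_{\pp_K}$ and preserved valuations.

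To finish, note that $a$ is still not a $q$-th power in the residue field $\kappa_{\pp_{L_4}}=\kappa_{\pp_K}$, so Lemma~\ref{le:split1}(b) yields that $\pp_{L_4}$ is inert in $L_4(\sqrt[q]{a})/L_4$. Moreover $\ord_{\pp_{L_4}}(bx^q+b^q)=\ord_{\pp_K}(bx^q+b^q)\not\equiv 0\bmod q$, and Lemma~\ref{le:not_norm} concludes the argument. The main care required is the bookkeeping through the three intermediate $q$-th root extensions, with attention to the possibility that some of the $q$-th root adjunctions produce trivial extensions; but in every such case the residue field and valuations are trivially preserved, so the argument goes through unchanged.
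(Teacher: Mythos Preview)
Your proposal is correct and follows essentially the same route as the paper: track $\pp_K$ through the tower $K\subseteq L_1\subseteq L_3\subseteq L_4$, showing at each stage that the relevant $q$-th root adjunction is of an element congruent to $1$ modulo the prime (so Lemma~\ref{le:split1}(b) gives complete splitting or triviality), conclude that $f(\pp_{L_4}/\pp_K)=1$ so $a$ remains a non-$q$-th-power in the residue field, and finish with Lemma~\ref{le:not_norm}. The only cosmetic difference is that you invoke Lemmas~\ref{le:pole} and~\ref{le:ramify} at the first two steps, while the paper argues those steps directly from Lemma~\ref{le:split1}(b); the content is identical.
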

\begin{proof}
For this proof, we will apply Lemma~\ref{le:split1} repeatedly as we go up the tower of fields. Our goal is to show that the relative degree of $\pp_{L_4}$ over $\pp_K$ is one. Observe that in this case $a$ is not a $q$-th power in the residue field of $\pp_{L_4}$ and therefore not a $q$-th power in $L_4$.

First, by assumption $\ord_{\pp_K}(bx^q+b^q)=\ord_{\pp_K} bx^q <0$.  Thus, 
$$
    1+(bx^q+b^q)^{-1} \equiv 1 \bmod \pp_K,
$$
so $\pp_K$ splits completely in $L_1$ by Lemma~\ref{le:split1}.\eqref{it:2} regardless of whether $L_1=K$ or not.
Similarly, if $\pp_{L_1}$ is an $L_1$-factor of $\pp_K$, then $1+x^{-1} \equiv 1 \bmod \pp_{L_1}$  and therefore $\pp_{L_1}$ splits completely in $L_3$, again by Lemma~\ref{le:split1}.\eqref{it:2} whether $L_3=L_1$ or not.  
Since $a$ is a unit at $\pp_K$, we have that $(a+a^{-1})$ does not have a pole at any $L_3$-factor $\pp_{L_3}$ of $\pp_K$.  
Therefore, for any such $\pp_{L_3}$, we have that $\ord_{\pp_{L_3}}(a+a^{-1})x^{-1} >0$ and $\pp_{L_3}$ splits completely in $L_4$, again by Lemma~\ref{le:split1}.\eqref{it:2} whether $L_4=L_3$ or not.  

Therefore for any $L_4$-prime $\pp_{L_4}$ lying above $\pp_K$, we have that $f(\pp_{L_4}/\pp_K)=1$ and as pointed out above $a$ is not a $q$-th power in the residue field of $\pp_{L_4}$ and not a $q$-th power in $L_4$.  
Therefore, $\pp_{L_4}$ is inert in $L_4(\sqrt[q]{a})$ by Lemma~\ref{le:split1}.\eqref{it:2}.
Because $\ord_{\pp_{L_4}}(bx^q+b^q) \not \equiv 0 \bmod q$, the norm equation 
does not have a solution in $L_4(\sqrt[q]{a})$ by Lemma~\ref{le:not_norm}.
\end{proof}

\begin{lemma}
\label{le:existsolutions}
  Fix $x \in K^\times$ and let $\calS_K$ be a nonempty finite set of valuations of $K$ containing all valuations occurring as poles of $x$. Let $a, b \in K^\times$ satisfy the following conditions:
  \begin{enumerate}
      \item $a$ and $b$ are units at all valuations of $\calS_K$,
      \item $a \equiv 1$ modulo all valuations in $\calS_K$.
  \end{enumerate}
  Let $L_1=K(\sqrt[q]{1+(bx^q+b^q)^{-1}})$, 
  $L_3=L_1(\sqrt[q]{1+x^{-1}})$, 
  and $L_4=L_3(\sqrt[q]{1+(a+a^{-1})x^{-1}})$.
  Then the norm equation below has solutions in $L_4(\sqrt[q]{a})$:
   \[
    {\mathbf N}_{L_4(\sqrt[q]{a})/L_4}(y)=bx^q+b^q.
    \]    
\end{lemma}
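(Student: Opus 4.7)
The plan is to apply the Hasse Norm Principle in the form of Proposition~\ref{prop:hasse_application}. First, observe that if $bx^q + b^q = 0$ then the norm equation is trivially satisfied by $y = 0$, so I assume $bx^q + b^q \neq 0$ from now on, which ensures that $L_1$, $L_3$, and $L_4$ are well-defined field extensions. To apply Proposition~\ref{prop:hasse_application}, two conditions must be verified: (i) the divisor of $a$ in $L_4$ is the $q$-th power of another $L_4$-divisor, and (ii) at every prime $\pp_{L_4}$ of $L_4$, either $\pp_{L_4}$ splits completely in $L_4(\sqrt[q]{a})/L_4$, or $\ord_{\pp_{L_4}}(bx^q + b^q) \equiv 0 \bmod q$.

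For (i), I would apply Lemma~\ref{le:split2}\eqref{it:split2:b} to the extension $L_4/L_3$, with base field $L_3$ and parameter $w = x$. The hypothesis to check is that at every prime $\aaa_{L_3}$ of $L_3$ occurring in the divisor of $a$, one has $\ord_{\aaa_{L_3}} x \geq 0$ and $\ord_{\aaa_{L_3}} x \equiv 0 \bmod q$. Nonnegativity holds because such a prime lies above some $\aaa_K$ where $a$ has nonzero order; since $a$ is a unit at every valuation in $\calS_K$, we have $\aaa_K \notin \calS_K$, and since $\calS_K$ contains all poles of $x$ we get $\ord_{\aaa_K} x \geq 0$. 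The divisibility by $q$ is immediate from Lemma~\ref{le:ramify}(b) applied to $L_3/L_1$ with $z = x$, since Lemma~\ref{le:ramify}(b) says that every prime at which $x$ has nonnegative order has order divisible by $q$.

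For (ii), I would case-split on whether the restriction $\pp_K$ of $\pp_{L_4}$ to $K$ lies in $\calS_K$. If $\pp_K \in \calS_K$, then $a \equiv 1 \bmod \pp_{L_4}$ by hypothesis, so $a$ is a $q$-th power in the residue field at $\pp_{L_4}$, and Lemma~\ref{le:split1}\eqref{it:2} shows that $\pp_{L_4}$ splits completely in $L_4(\sqrt[q]{a})/L_4$. Otherwise $\pp_K \notin \calS_K$, in which case $\pp_K$ is not a pole of $x$, so $\ord_{\pp_K} x \geq 0$. Here I would further distinguish two subcases: if $\ord_{\pp_{L_1}}(bx^q + b^q) \geq 0$, Lemma~\ref{le:ramify}(b) applied to $L_1/K$ with $z = bx^q + b^q$ yields $\ord_{\pp_{L_1}}(bx^q + b^q) \equiv 0 \bmod q$, which propagates to $\pp_{L_4}$ via the ramification index; if instead $\ord_{\pp_K} b < 0$, a direct order computation using $\ord_{\pp_K} x \geq 0$ gives $\ord_{\pp_K}(bx^q) > q\,\ord_{\pp_K}b = \ord_{\pp_K}(b^q)$, so $\ord_{\pp_K}(bx^q + b^q) = q\,\ord_{\pp_K} b \equiv 0 \bmod q$.

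The main obstacle is bookkeeping: keeping track of the cascade of $q$-th root extensions and verifying that each auxiliary lemma is applied at the correct stage of the tower, with the correct choice of parameters. Conceptually, the work is already done by Lemmas~\ref{le:ramify} and~\ref{le:split2}, which were designed precisely so that the zeros of $bx^q+b^q$ and the divisor of $a$ become $q$-divisible by the time one reaches $L_4$. Once (i) and (ii) are established, Proposition~\ref{prop:hasse_application} immediately delivers a solution $y \in L_4(\sqrt[q]{a})$ to the norm equation.
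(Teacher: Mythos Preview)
Your proof is correct and follows essentially the same approach as the paper: verify the two hypotheses of Proposition~\ref{prop:hasse_application} by using Lemma~\ref{le:ramify} on $L_3/L_1$ together with Lemma~\ref{le:split2} on $L_4/L_3$ for the divisor of $a$, and Lemma~\ref{le:ramify} on $L_1/K$ together with the congruence $a\equiv 1$ at primes of $\calS_K$ for the orders of $bx^q+b^q$. The only cosmetic difference is the organization of the case split in part~(ii): the paper first separates zeros and poles of $bx^q+b^q$ and then distinguishes $\pp_K\in\calS_K$ versus $\pp_K\notin\calS_K$ among the poles, whereas you split first on membership in $\calS_K$ and then on the sign of $\ord_{\pp_{L_1}}(bx^q+b^q)$; both decompositions cover the same ground.
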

\begin{proof}
    We will use the application of the Hasse Norm Principle found in Proposition~\ref{prop:hasse_application}.
    First, let $\aaa_{L_1}$ be a prime in the $L_1$-divisor of $a$. 
    By hypothesis, $a$ is a unit at all poles of $x$, so $\aaa_{L_1}$ is not a pole of $x$.  
    This implies that $\ord_{\aaa_{L_3}}x\equiv 0\bmod q$ by Lemma~\ref{le:ramify} applied to the extension $L_3/L_1$ for any prime $\aaa_{L_3}$ lying over $\aaa_{L_1}$. 
    Therefore, $\ord_{\aaa_{L_4}}a\equiv 0\bmod q$ for any prime $\aaa_{L_4}$ of $L_4$ lying over $\aaa_{L_3}$ by applying Lemma~\ref{le:split2} to the extension $L_4/L_3$,  so this proves that the $L_4$-divisor of $a$ is the $q$-th power of another divisor.

    To finish the proof, we show that every prime $\pp_{L_4}$ of $L_4$ is either split in $L_4(\sqrt[q]{a})$ or satisfies $\ord_{\pp_{L_4}}(bx^q + b^q)\equiv 0\bmod q$. Without loss of generality, we only need to consider primes in the divisor of $bx^q+b^q$.
    If $\pp_K$ is a zero of $bx^q+b^q$, then Lemma~\ref{le:ramify} applied to the extension $L_1/K$ implies that $\ord_{\pp_{L_1}}(bx^q+b^q)\equiv 0\bmod q$ for every prime $\pp_{L_1}$ lying over $\pp_{K}$. 
     
     Finally, suppose $\pp_K$ is a pole of $bx^q + b^q$ in $K$. If $\pp_K\in \calS_K$, then $a\equiv 1\bmod \pp_K$ by hypothesis, so all factors  of $\pp_K$ in $L_4$ split completely in $L_4(\sqrt[q]{a})$ by Lemma~\ref{le:split1}.\eqref{it:2}. (If the extension $L_4(\sqrt[q]{a})/L_4$ is trivial, then this statement is trivially true.)
     Alternatively, if $\pp_K\not\in \calS_K$, then $\ord_{\pp_K}x\geq 0$ by hypothesis, so $\pp_K$ must be a pole of $b$ and $\ord_{\pp_K}(bx^q + b^q) = q\ord_{\pp_K}b \equiv 0\bmod q$.
     This establishes that all hypotheses of Proposition~\ref{prop:hasse_application} are satisfied, which completes the proof.
 \end{proof}

As anticipated above, the preceding lemmas, along with Weak Approximation, lead to the definability of rings of $\calS$-integers in global function fields.
Recall the notation that
 $U_{\calS_K}$ is the set of elements of ${K}$ that are units at every prime of $\calS_K$ and  $V_{\calS_K}$ is the set of elements $x$ in  ${K}$ such that $\ord_{\pp_K}(x-1)>0$ for every $\pp_K\in\calS_K$.

\begin{proposition}
\label{prop:finite}
   Let $\calS_K$ be a nonempty finite set of valuations of $K$. For each choice of $a,b,x\in K^\times$, define the fields 
   $L_1=K(\sqrt[q]{1+(bx^q+b^q)^{-1}})$, 
   $L_3=L_1(\sqrt[q]{1+x^{-1}})$, 
   and $L_4=L_3(\sqrt[q]{(1+(a+a^{-1})x^{-1})}$.
   The sentence 
    \[
    \forall a \in \calV_{\calS_K} \forall b\in U_{\calS_K}\exists y \in L_4(\sqrt[q]{a}): {\mathbf N}_{L_4(\sqrt[q]{a})/L_4}(y)=bx^q+b^q
    \]
    is true if and only if $x$ is an $\calS_K$-integer.
\end{proposition}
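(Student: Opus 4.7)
The plan is to prove both directions of the equivalence by invoking Lemmas~\ref{le:existsolutions} and \ref{le:nosolution}, which are already set up for the exact norm equation and tower of fields in the statement. The only real work is checking that the conditions defining $\calV_{\calS_K}$ and $U_{\calS_K}$ interact with the hypotheses of these lemmas in the right way.

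For the forward direction, suppose $x$ is an $\calS_K$-integer, so every pole of $x$ lies in $\calS_K$. Fix any $a \in \calV_{\calS_K}$ and $b \in U_{\calS_K}$. The defining condition $\ord_{\pp_K}(a-1) > 0$ for $\pp_K \in \calS_K$ forces $a$ to be a unit at every such $\pp_K$, so together with $b \in U_{\calS_K}$ the pair $(a,b)$ satisfies the unit and congruence hypotheses of Lemma~\ref{le:existsolutions}. That lemma then produces a solution $y \in L_4(\sqrt[q]{a})$ to the norm equation, so the universal sentence holds.

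For the converse, I would argue the contrapositive: if $x$ is not an $\calS_K$-integer, exhibit particular $a \in \calV_{\calS_K}$ and $b \in U_{\calS_K}$ for which the norm equation has no solution. Since $x \notin \OO_{K,\calS_K}$, there is a prime $\pp_K \notin \calS_K$ with $\ord_{\pp_K} x \leq -1$. I would construct $a$ and $b$ via the Weak Approximation Theorem in the spirit of Lemma~\ref{le:exists}: choose $b \in K^\times$ with $\ord_{\pp_K} b = -1$ and $\ord_{\pp} b = 0$ for every $\pp \in \calS_K$, giving $b \in U_{\calS_K}$ with $\ord_{\pp_K} b < 0$ and $\ord_{\pp_K} b \not\equiv 0 \bmod q$; and fix an element $w \in K$ whose residue at $\pp_K$ is not a $q$-th power, then choose $a \in K^\times$ with $\ord_{\pp_K}(a - w) > 0$ and $\ord_{\pp}(a-1) > 0$ for each $\pp \in \calS_K$, giving $a \in \calV_{\calS_K}$ which is a unit at $\pp_K$ with non-$q$-th-power residue. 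Compatibility is automatic because $\pp_K \notin \calS_K$, so Weak Approximation imposes no conflicting constraints.

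With these choices, $\ord_{\pp_K} x \leq -1 < -\tfrac{q-1}{q} = \tfrac{q-1}{q}\ord_{\pp_K} b$, so every hypothesis of Lemma~\ref{le:nosolution} is satisfied, and that lemma yields that the norm equation has no solution in $L_4(\sqrt[q]{a})$. This falsifies the universal sentence for $x$, completing the proof. The argument is essentially a bookkeeping exercise on top of the two preceding lemmas; I do not anticipate a serious obstacle, and the only subtlety is the numerical verification of the inequality $\ord_{\pp_K} x < \tfrac{q-1}{q} \ord_{\pp_K} b$ with $\ord_{\pp_K} b = -1$, which is immediate since $q \geq 2$ implies $\tfrac{q-1}{q} < 1$.
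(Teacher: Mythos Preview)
Your proposal is correct and follows essentially the same approach as the paper: both directions are handled by Lemmas~\ref{le:existsolutions} and~\ref{le:nosolution}, with Weak Approximation supplying the witnesses $a$ and $b$ in the contrapositive. The only cosmetic difference is that you pin down $\ord_{\pp_K}b=-1$ (making the inequality $\ord_{\pp_K}x<\tfrac{q-1}{q}\ord_{\pp_K}b$ immediate), whereas the paper imposes the equivalent but more flexible constraint $\ord_{\pp_K}bx^q<\ord_{\pp_K}b^q$ directly.
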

\begin{proof}
  Let $\pp_K \not \in \calS_K$  be such that $\ord_{\pp_K}x<0$.  We show that there exists $a \in \calV_{K}\cap U_{\{\pp_K\}}$, and $b \in U_{\calS_K}$ such that norm equation does not have a solution $y$ in $L_4(\sqrt[q]{a})$. By the Weak Approximation Theorem we can choose $b \in U_{\calS_K}$ such that $\ord_{\pp_K}b \not \equiv 0\bmod q$, $\ord_{\pp_K}b<0$ and $\ord_{\pp_K}bx^q<\ord_{\pp_K}b^q$; see \cite[Theorem I.1.1]{Lang94}.
  Similarly, by the same theorem, there exists $a \in \calV_{K}$ such that $a$ is  a unit modulo $\pp_K$ and not a $q$-th power modulo $\pp_K$. 
  By Lemma \ref{le:nosolution}, it now follows that the norm equation does not have solution for this choice of $a$ and $b$.

  Assume now that all poles of $x$ are in $\calS_K$, and $a, b$ are units at valuations in $\calS_K$ such that $a$ is equivalent to 1 modulo valuations in $\calS_K$.  It now follows that the norm equation has solutions by Lemma \ref{le:existsolutions}.  
\end{proof}

\section{\texorpdfstring{$q$-Bounded fields}{q-Bounded fields}}
\label{sec:q_bounded_def}

Now that we have established some definability and decidability results for finite extensions of $\F_p(t)$ in Section~\ref{sec:global}, we turn our attention to the class of infinite algebraic extensions of $\F_p(t)$ which we call $q$-bounded fields. 
In the introduction, we presented the concept of \emph{global $q$-boundedness}  (Definition~\ref{def:global_q_bounded}), whose name reflects the fact that the property concerns degrees of global extensions $\hat K\supseteq K \supseteq \F_p(t)$.
In reality, the properties that we exploit are local conditions concerning primes $\pp_K$ and their corresponding completions $K_{\pp_K}$, which we call local fields.
The present section is dedicated to developing this idea.
We ultimately present three equivalent ways to conceptualize $q$-boundedness; see Proposition~\ref{prop:equiv_q_bounded_defs}.

\subsection{\texorpdfstring{$q$}{q}-Bounded algebraic extensions of local fields}

We first provide a local definition, which will pave the way for subsequently defining $q$-boundedness for algebraic extensions of $\F_p(t)$.
We use the term \emph{local function field} to refer to a completion $K_\pp$, where $K$ is a finite extension of $\F_p(t)$ and $\pp$ is a prime of $K$.
In the local setting, $q$-boundedness is merely a restriction on the degrees of finite subextensions.

\begin{definition}[\texorpdfstring{$q$}{q}-Bounded algebraic extensions of local function fields]
\label{def:q_bounded_local}
 If ${\bf L}_{\pp}$ is an algebraic extension of a local function field $K_{\pp}$, then we say that ${\mathbf L}_{\pp}$ is $q$-bounded if there exists some $r \in \Z_{>0}$ so that $\ord_q([H_{\pp}:K_{\pp}]) \leq r$ for every finite extension $H_{\pp}$ of $K_{\pp}$ contained in ${\bf L}_\pp$.
\end{definition}

Equivalently, $q$-boundedness in the local setting can be formulated in terms of the existence of a certain maximal finite extension.
\begin{lemma}
\label{le:equivalent}
  If ${\bf L}_{\pp}$ is an algebraic  extension of a local function field $K_{\pp}$, then the following are equivalent
  \begin{enumerate}
      \item ${\bf L}_{\pp}$ is $q$-bounded;\
      \item There exists a finite extension $H_\pp$ of $K_\pp$ in ${\bf L}_{\pp}$ such that $\ord_q([E_{\pp}:H_\pp])=0$ for every finite extension $E_{\pp}/H_{\pp}$ contained in ${\bf L}_{\pp}$. 
  \end{enumerate}
\end{lemma}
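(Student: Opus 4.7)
The plan is to prove each implication separately, exploiting only multiplicativity of degrees in towers and the basic fact that the compositum of two finite subextensions of a common algebraic extension is itself finite, with $[F_\pp H_\pp : H_\pp]$ dividing (or at most) $[F_\pp : K_\pp]$.

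For the direction (1) $\Rightarrow$ (2), I would exploit the fact that the set
\[
S := \{\ord_q([H_\pp:K_\pp]) : H_\pp/K_\pp \text{ finite},\ H_\pp \subseteq {\bf L}_\pp\}
\]
is a set of non-negative integers which, by $q$-boundedness, is bounded above by the constant $r$ of Definition~\ref{def:q_bounded_local}. Hence $S$ has a maximum $r_0$, and I choose $H_\pp$ to be any finite extension of $K_\pp$ inside ${\bf L}_\pp$ realizing this maximum. For any finite extension $E_\pp/H_\pp$ inside ${\bf L}_\pp$, tower multiplicativity gives
\[
\ord_q([E_\pp:K_\pp]) = \ord_q([E_\pp:H_\pp]) + \ord_q([H_\pp:K_\pp]) = \ord_q([E_\pp:H_\pp]) + r_0,
\]
and since the left-hand side lies in $S$ and is hence at most $r_0$, we force $\ord_q([E_\pp:H_\pp]) = 0$, which is exactly condition~(2).

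For the direction (2) $\Rightarrow$ (1), I would take the $H_\pp$ provided by~(2) and consider an arbitrary finite extension $F_\pp/K_\pp$ inside ${\bf L}_\pp$. The compositum $F_\pp H_\pp$ is then a finite extension of $H_\pp$ inside ${\bf L}_\pp$, with $[F_\pp H_\pp : H_\pp] \leq [F_\pp : K_\pp]$ (a standard fact about compositums). Applying tower multiplicativity twice to $K_\pp \subseteq H_\pp \subseteq F_\pp H_\pp$ and $K_\pp \subseteq F_\pp \subseteq F_\pp H_\pp$ gives
\[
\ord_q([F_\pp:K_\pp]) \leq \ord_q([F_\pp H_\pp:K_\pp]) = \ord_q([F_\pp H_\pp:H_\pp]) + \ord_q([H_\pp:K_\pp]) = \ord_q([H_\pp:K_\pp]),
\]
where the last equality uses hypothesis~(2) applied to $E_\pp = F_\pp H_\pp$. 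Thus $r := \ord_q([H_\pp:K_\pp])$ serves as a uniform bound, establishing~(1).

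Neither direction presents a real obstacle; the only subtlety to be careful about is ensuring that the compositum $F_\pp H_\pp$ is formed inside the ambient field ${\bf L}_\pp$ so that hypothesis~(2) is applicable, and noting that the maximum in the first direction exists because $S \subseteq \{0,1,\ldots,r\}$ and is nonempty (it contains $\ord_q([K_\pp:K_\pp]) = 0$).
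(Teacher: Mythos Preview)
Your proposal is correct and follows essentially the same argument as the paper's proof: for (1)$\Rightarrow$(2) you both choose $H_\pp$ realizing the maximum value of $\ord_q[H_\pp:K_\pp]$ and use multiplicativity of degrees, and for (2)$\Rightarrow$(1) you both pass to the compositum $F_\pp H_\pp$ and apply the hypothesis there. The only cosmetic difference is that you mention the bound $[F_\pp H_\pp:H_\pp]\leq [F_\pp:K_\pp]$, which is true but not actually needed for the inequality chain you write.
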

\begin{proof}
First assume that ${\bf L}_{\pp}$ is $q$-bounded.  Let 
\[
r=\max\{\ord_q[F_{\pp}:K_{\pp}] : F_{\pp}/K_{\pp} \text{ is finite }, F_{\pp} \subset {\bf L}_{\pp}\}.
\]    
By assumption $r<\infty$ and therefore there is a field $H$ such that $\ord_q[H_{\pp}:K_{\pp}]=r$.  By the definition of $r$ and the multiplicativity of degrees in towers, it now follows that $\ord_q[E_{\pp}:H_{\pp}]=0$ for every finite extension $E_{\pp}/H_{\pp}$ contained in  ${\bf L}_{\pp}$.

Now assume the field $H_{\pp}$ exists.  If $E_{\pp} \subset {\bf L}_{\pp}$ is a finite extension of $K_{\pp}$, then 
\[
\ord_q[F_{\pp}:K_{\pp}]\leq \ord_q[F_{\pp}H_{\pp}:K_{\pp}]=\ord_q[H_{\pp}:K_{\pp}]+\ord_q[F_{\pp}H_{\pp}:H_{\pp}]=\ord_q[H_{\pp}:K_{\pp}]
\]
by the definition of $H_{\pp}$, which completes the proof.
\end{proof}

\subsection{\texorpdfstring{$q$}{q}-Boundedness for algebraic extensions of global function fields}

Our next goal is to define $q$-boundedness for global function fields in terms of the local property outlined in the previous subsection.
Rather than appealing to the completions of arbitrary algebraic extensions of $\F_p(t)$, it is sufficient to work with completions of global fields and their unions in towers, as described in the following definition.

\begin{definition}[Tower of completions]
\label{def:tower_completion}
 Let ${\bf K}$   be an algebraic extension of $\F_p(t)$. 
 Write ${\bf K} = \cup_{j = 0}^\infty K_j$ where 
 $$
    \F_p(t) \subseteq K_1\subseteq K_2\subseteq \dots
$$
is a tower of finite extensions.
 Let $v$ be a valuation on $\F_p(t)$, let ${\bf v}$ be an extension of $v$ to ${\bf K}$, and let $v_i$ be the restriction of ${\bf v}$ to $K_i$.
 Let $\iota:{\bf K}\hookrightarrow {\bf K}_{{\bf v}}$ be an embedding and let $(\iota(K_i))_{v_i}=(\iota(K_i))_{{\bf v}}$ be the completion of $\iota(K_i)$ inside ${\bf K}_{{\bf v}}$ with respect to $v_i$ for each $i\geq 1$.
 We say that $\bigcup_{i=0}^{\infty}(\iota(K_i))_{v_i}$ is a tower of completions of ${\bf K}$ with respect ${\bf v}$, and we denote it by ${\bf K}_{T,{\bf v}}$.
\end{definition}

Intuitively, the tower of completions ${\bf K}_{T,{\bf v}}$ is the smallest extension of $K_{v_K}$ encapsulating all the local requirements for $q$-boundedness.
See the appendix (Section \ref{ssec:valuations}) for a discussion of the difference between the tower of completions ${\bf K}_{T,{\bf v}}$ and the completion ${\bf K_v}$ of ${\bf K}$.
As indicated by our choice of notation, the field ${\bf K}_{T,{\bf v}}$ only depends on the field ${\bf K}$ and the specified valuation ${\bf v}$, as shown in the following lemma.

\begin{lemma}
    In the notation of Definition~\ref{def:tower_completion}, the tower of completions ${\bf K}_{T,{\bf v}}$ is independent of the choice of the tower of global fields $K_1\subseteq K_2\subseteq \dots $ and the choice of transcendental element $t$.
\end{lemma}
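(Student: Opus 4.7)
The plan is to exhibit ${\bf K}_{T,{\bf v}}$ as a union over the directed system of all finite subextensions of $\F_p(t)$ inside ${\bf K}$, which makes both independence claims fall out of a standard cofinality argument. The crucial preliminary step is to identify, for any finite subextension $F$ of $\F_p(t)$ in ${\bf K}$, the local field $(\iota(F))_{{\bf v}|_F}$ with the topological closure of $\iota(F)$ inside ${\bf K}_{{\bf v}}$. This rests on two standard facts about valued fields: the topology on $\iota(F)$ inherited from ${\bf K}_{{\bf v}}$ coincides with the topology defined by ${\bf v}|_F$, and the completion of $\iota(F)$ with respect to ${\bf v}|_F$ can be realized as its closure inside any complete valued overfield containing it. Once this identification is in place, if $F \subseteq F'$ are two such finite subextensions of $\F_p(t)$ in ${\bf K}$, then monotonicity of closures yields $(\iota(F))_{{\bf v}|_F} \subseteq (\iota(F'))_{{\bf v}|_{F'}}$ as subfields of ${\bf K}_{{\bf v}}$.

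With this preliminary step, independence from the choice of tower is straightforward. Given two towers $\{K_i\}$ and $\{K_j'\}$ of finite extensions of $\F_p(t)$ with union ${\bf K}$, each $K_i$ is finitely generated over $\F_p(t)$, so its generators lie in some $K_{j(i)}'$, giving $K_i \subseteq K_{j(i)}'$. The monotonicity above then implies
\[
\bigcup_{i} (\iota(K_i))_{v_i} \subseteq \bigcup_{j} (\iota(K_j'))_{v_j'},
\]
and symmetry establishes equality.

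For independence from the choice of transcendental element, let $t, t' \in {\bf K}$ both be transcendental over $\F_p$, and fix towers $\{K_i\}$ over $\F_p(t)$ and $\{K_j'\}$ over $\F_p(t')$, each with union ${\bf K}$. Because ${\bf K}/\F_p(t)$ is algebraic and $t' \in {\bf K}$, the element $t'$ is algebraic over $\F_p(t)$. Hence $\tilde K_i := K_i(t')$ is a tower whose terms are finite extensions of \emph{both} $\F_p(t)$ and $\F_p(t')$, still with union ${\bf K}$. Applying the cofinality argument first to $\{K_i\}$ and $\{\tilde K_i\}$ (as towers over $\F_p(t)$) and then to $\{\tilde K_i\}$ and $\{K_j'\}$ (as towers over $\F_p(t')$) gives the desired equality of unions of completions.

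There is no substantial obstacle; the only point requiring genuine care is the preliminary identification of the completions appearing in the definition with the closures inside ${\bf K}_{{\bf v}}$, which is a routine application of the universal property of completions of valued fields. Once that identification is made, the rest of the proof is purely a cofinality argument on directed unions.
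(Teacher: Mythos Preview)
Your proof is correct and follows essentially the same approach as the paper: both use the cofinality argument that each $K_i$ sits inside some $K'_{j(i)}$ because it is finitely generated, and both handle the change of transcendental element by passing through a tower whose terms are finite over both $\F_p(t)$ and $\F_p(t')$. Your preliminary identification of $(\iota(F))_{{\bf v}|_F}$ with the closure of $\iota(F)$ in ${\bf K}_{\bf v}$ makes explicit the monotonicity step that the paper states in a single line.
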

\begin{proof}
    Consider another tower of global function fields
   $L_1\subseteq L_2\subseteq \dots $ satisfying ${\bf K}=\cup_{j = 0}^\infty L_j$, and let $w_i$ be the restriction of ${\bf v}$ to $L_i$.  
   It is enough to note that for all $i$ there exists $j$ so that $L_i \subset K_j$ because each $L_i$ is finitely generated over $\F_p(t)$. 
    Consequently, $\iota(L_i) \subset \iota(K_j)$ and $(\iota(L_i))_{w_i}\subset (\iota(K_j))_{v_j}$ since both valuations are restrictions of ${\bf v}$.
    We conclude that $$\bigcup_{i=0}^{\infty}(\iota(K_i))_{v_i}\supseteq \bigcup_{i=0}^{\infty}(\iota(L_i))_{w_i}.$$
    The reverse containment is clear from reversing the roles of $\{K_i\}$ and $\{L_j\}$.

    Now let $t_1, t_2 \in {\bf K}$ be two transcendental elements over $\F_p$. 
    Observe that $\F_p(t_1,t_2)$ is a finite algebraic extension of $\F_p(t_j)$ inside ${\bf K}$ for each $1\leq j\leq 2$.  
    Therefore, there is a tower extending from $\F_p(t_j)$ to ${\bf K}$ containing $\F_p(t_1,t_2)$ for both $j=1$ and $j=2$, and these towers give rise to the same field ${\bf K}_{T,{\bf v}}$ by construction.
\end{proof}

\begin{proposition}
\label{prop:existalpha}
Let $K$ be a global function field, let ${\bf K}$ be an algebraic extension of $K$, and let $\pp_{\bf K}$ be a prime of ${\bf K}$ lying over a prime $\pp_K$ of $K$. If $\tt F$ is a finite extension of $K_{\pp_K}$ inside ${\bf K}_{T,\pp_{\bf K}}$, then there is a finite extension $F_0$ of $K$ contained in ${\bf K}$ so that  $(F_0)_{\pp_{F_0}} \supseteq {\tt F}$, where $\pp_{F_0}$ is the prime of $F_0$ lying under $\pp_{\bf K}$.
\end{proposition}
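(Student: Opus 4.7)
The plan is to exploit the fact that ${\tt F}$ is a finite extension of $K_{\pp_K}$ together with the filtered union structure of ${\bf K}_{T,\pp_{\bf K}}$. Being finite over $K_{\pp_K}$, the field ${\tt F}$ is finitely generated: I would pick generators $\alpha_1, \ldots, \alpha_m \in {\tt F}$ so that ${\tt F} = K_{\pp_K}(\alpha_1, \ldots, \alpha_m)$ (for instance, any $K_{\pp_K}$-vector space basis of ${\tt F}$ does the job, so separability issues in positive characteristic are irrelevant).

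Next, I would write ${\bf K} = \bigcup_{i=0}^\infty K_i$ as a tower of finite extensions $K = K_0 \subseteq K_1 \subseteq \cdots$ whose union is ${\bf K}$, and set $v_i = {\bf v}|_{K_i}$, so that by Definition~\ref{def:tower_completion} we have
\[
{\bf K}_{T,\pp_{\bf K}} \;=\; \bigcup_{i=0}^\infty (\iota(K_i))_{v_i}.
\]
Since each $\alpha_j$ lies in this union, there exists an index $i_j$ with $\alpha_j \in (\iota(K_{i_j}))_{v_{i_j}}$. Because the chain is increasing, taking $N = \max_{1 \le j \le m} i_j$ places every $\alpha_j$ inside the single completion $(\iota(K_N))_{v_N}$. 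Combined with the inclusion $K_{\pp_K} = (\iota(K_0))_{v_0} \subseteq (\iota(K_N))_{v_N}$, this yields ${\tt F} \subseteq (\iota(K_N))_{v_N}$.

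Finally, I would set $F_0 = K_N$. Then $F_0$ is a finite extension of $K$ sitting inside ${\bf K}$, and the valuation $v_N$ is by construction the restriction of ${\bf v}$ to $F_0$, hence corresponds to the prime $\pp_{F_0}$ of $F_0$ lying below $\pp_{\bf K}$. Identifying $(F_0)_{\pp_{F_0}}$ with $(\iota(K_N))_{v_N}$ then gives $(F_0)_{\pp_{F_0}} \supseteq {\tt F}$, as required. There is no serious obstacle in this argument; the proof amounts to the principle that finitely many elements of a filtered union already lie in a single member of the union. The only point requiring any care is bookkeeping about the embedding $\iota$ and the identification of the abstract completion $(F_0)_{\pp_{F_0}}$ with the concrete subfield $(\iota(K_N))_{v_N}$ of ${\bf K}_{\bf v}$, which is immediate from the definitions.
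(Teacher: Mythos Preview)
Your proposal is correct and follows essentially the same approach as the paper: both arguments use that ${\tt F}$, being a finite extension of $K_{\pp_K}$, is finitely generated, and hence its finitely many generators must lie in a single term $(K_j)_{\pp_j}$ of the filtered union defining ${\bf K}_{T,\pp_{\bf K}}$. The paper's proof is simply a terser version of yours, omitting the explicit ``take the maximum index'' step.
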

\begin{center}
\begin{tabular}{c}
\xymatrix{ {\bf K} \ar[r]^{\iota} & {\bf K}_{T,\pp_{\bf K}} \\
            F_0\ar[r]\ar[u]^{\text{infinite}}& (F_0)_{\pp_{F_0}}\ar[u]_{\text{infinite}}\\
            & {\tt F} \ar[u]_{\text{finite}}\\
            K \ar[uu]^{\text{finite}} \ar[r] & K_{\pp_{K}} \ar[u]_{\text{finite}}
              }\\
             {\tiny {\tt F} is a finite extension of $K_{\pp_{K}}$}\\
              {\tiny $\pp_{\bf K}$ is a valuation of ${\bf K}$}\\
              {\tiny $\pp_{K}, \pp_{F_0}$ are restrictions of $\pp_{\bf K}$ to $K$ and $F_0$ respectively.}
\end{tabular}
\end{center}
\begin{proof}
   By definition of ${\bf K}_{T,\pp_K}$, there exists a sequence of finite extensions $\{K_i\}$ of $K$ such that $K=K_0$ and ${\bf K}_{T,\pp_{\bf K} }=\bigcup_{i=0}^{\infty}K_{i,\pp_i}$, where $\pp_i =\pp_{\bf K}\cap K_i$.  Since ${\tt F}$ is finitely generated over $K_{\pp_K}$, we have that there exists a $j$ such that ${\tt F} \subset K_{j,\pp_j}$. 
\end{proof}

We have now established the technical background for providing our first presention of the concept of $q$-boundedness for algebraic extensions of $\F_p(t)$.

\begin{definition}[\texorpdfstring{$q$}{q}-boundedness]
\label{def:q-bounded_tower_of_completions}
Let ${\bf K}$  be an algebraic  extension of $\F_p(t)$, and let ${\bf v}$ be a valuation of ${\bf K}$.
\begin{enumerate}
    \item We call ${\bf v}$  (or its valuation ideal) $q$-bounded if the corresponding tower of completions is $q$-bounded in the sense of Definition~\ref{def:q_bounded_local}.
    \item If $K$ is a subfield of ${\bf K}$ and $v$ is a valuation of $K$, then we say $v$ (or its valuation ideal) is $q$-bounded in ${\bf K}$ if every valuation of ${\bf K}$ over $v$ is $q$-bounded.
    \item We say that ${\bf K}$ is $q$-bounded if every of valuation of ${\bf K}$ is $q$-bounded. 
\end{enumerate}  
\end{definition}

Although towers of completions give one way to understand $q$-boundedness as a local property, there are other equivalent definitions. 
We present two equivalent definitions in the proposition below.

\begin{proposition}
\label{prop:equiv_q_bounded_defs}
    Let {\bf K} be an algebraic extension of $\F_p(t)$.  For any valuation ${\bf v}$ of ${\bf K}$, the following are equivalent:
    \begin{enumerate}
        \item  There exists a finite extension $K$ of $\F_p(t)$ such that for any finite extension $L/K$ with $L \subset {\bf K}$ we have that $\ord_qe(v_{|L}/v_{|K})=0$ and $\ord_qf(v_{|L}/v_{|K})=0$.
        \label{prop:equiv_q_bounded_defs:path}
        \item The set $\{\ord_q[F_v : \F_p(t)_v]\}$ is bounded, where $F$ ranges over all finite extension of $\F_p(t)$ contained in ${\bf K}$.
        \label{prop:equiv_q_bounded_defs:local_deg}
        \item The valuation $v$ is $q$-bounded.
         \label{prop:equiv_q_bounded_defs:tower}
    \end{enumerate}
\end{proposition}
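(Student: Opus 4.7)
The plan is to prove $(\ref{prop:equiv_q_bounded_defs:path}) \Rightarrow (\ref{prop:equiv_q_bounded_defs:local_deg}) \Rightarrow (\ref{prop:equiv_q_bounded_defs:tower}) \Rightarrow (\ref{prop:equiv_q_bounded_defs:path})$, using throughout that for any finite extension of local fields $L_w/K_v$ the local degree factors as $[L_w:K_v] = e(w/v) \cdot f(w/v)$, and that local degrees multiply in towers. The three equivalences are essentially the same fact viewed at three different levels (relative local degrees, absolute local degrees, and the tower of completions), and the engine that translates between the global and local worlds is Proposition~\ref{prop:existalpha} together with Lemma~\ref{le:equivalent}.

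For $(\ref{prop:equiv_q_bounded_defs:path}) \Rightarrow (\ref{prop:equiv_q_bounded_defs:local_deg})$, fix the field $K$ provided by (\ref{prop:equiv_q_bounded_defs:path}) and let $F$ be any finite extension of $\F_p(t)$ in ${\bf K}$. Form the compositum $FK$ inside ${\bf K}$; this is a finite extension of $K$, so by hypothesis both $\ord_q e(v_{|FK}/v_{|K}) = 0$ and $\ord_q f(v_{|FK}/v_{|K}) = 0$, hence $\ord_q[(FK)_v : K_v] = 0$. Then $\ord_q[(FK)_v : \F_p(t)_v] = \ord_q[K_v : \F_p(t)_v]$, and since $F_v \subseteq (FK)_v$ its local degree over $\F_p(t)_v$ divides this, giving a uniform bound independent of $F$.

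For $(\ref{prop:equiv_q_bounded_defs:local_deg}) \Rightarrow (\ref{prop:equiv_q_bounded_defs:tower})$, write ${\bf K} = \bigcup_i K_i$ as an increasing union of finite extensions of $\F_p(t)$ and let $v_i$ be the restriction of ${\bf v}$ to $K_i$. Any finite extension $H_{\pp}$ of $\F_p(t)_v$ inside the tower of completions ${\bf K}_{T,{\bf v}} = \bigcup_i (K_i)_{v_i}$ lies in some $(K_i)_{v_i}$ because $H_\pp$ is finitely generated; hence $\ord_q[H_\pp : \F_p(t)_v] \le \ord_q[(K_i)_{v_i} : \F_p(t)_v]$, which is bounded by (\ref{prop:equiv_q_bounded_defs:local_deg}). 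This is exactly Definition~\ref{def:q_bounded_local} applied to the tower of completions over $\F_p(t)_v$, so ${\bf v}$ is $q$-bounded.

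For $(\ref{prop:equiv_q_bounded_defs:tower}) \Rightarrow (\ref{prop:equiv_q_bounded_defs:path})$, apply Lemma~\ref{le:equivalent} to the $q$-bounded tower ${\bf K}_{T,{\bf v}}$ to obtain a finite extension $H_\pp$ of $\F_p(t)_v$ inside the tower such that $\ord_q[E_\pp : H_\pp] = 0$ for every finite extension $E_\pp$ of $H_\pp$ inside the tower. By Proposition~\ref{prop:existalpha}, there is a finite extension $K$ of $\F_p(t)$ with $K \subset {\bf K}$ such that $K_{v_{|K}} \supseteq H_\pp$. Given any finite $L/K$ with $L \subset {\bf K}$, the completion $L_{v_{|L}}$ is a finite extension of $H_\pp$ inside the tower, so $\ord_q[L_{v_{|L}} : H_\pp] = 0$ and therefore $\ord_q[L_{v_{|L}} : K_{v_{|K}}] = 0$. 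Factoring this local degree as $e(v_{|L}/v_{|K}) \cdot f(v_{|L}/v_{|K})$ forces both factors to have $\ord_q = 0$, establishing (\ref{prop:equiv_q_bounded_defs:path}).

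The main substantive step is the final implication: it is where we must cross from local data (the bound in the tower of completions) back to global data (a global field $K$ controlling relative ramification and residue degrees), and this crossing relies essentially on Proposition~\ref{prop:existalpha} to dominate the local field $H_\pp$ by the completion of a global field inside ${\bf K}$.
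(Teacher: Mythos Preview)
Your proof is correct and uses the same ingredients as the paper's: the factorization $[L_w:K_v]=e\cdot f$, Proposition~\ref{prop:existalpha} to pass between global subfields and finite pieces of the tower of completions, and (implicitly or explicitly) Lemma~\ref{le:equivalent}. The only structural difference is that the paper argues $(\ref{prop:equiv_q_bounded_defs:path})\Leftrightarrow(\ref{prop:equiv_q_bounded_defs:local_deg})$ and $(\ref{prop:equiv_q_bounded_defs:local_deg})\Leftrightarrow(\ref{prop:equiv_q_bounded_defs:tower})$ separately, observing that $(\ref{prop:equiv_q_bounded_defs:tower})\Rightarrow(\ref{prop:equiv_q_bounded_defs:local_deg})$ is immediate since each $F_v$ is itself a finite extension inside the tower; your cyclic route instead proves $(\ref{prop:equiv_q_bounded_defs:tower})\Rightarrow(\ref{prop:equiv_q_bounded_defs:path})$ directly via Lemma~\ref{le:equivalent}, which is slightly more work but equally valid.
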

\begin{proof}
    Recall that the degree of a finite local extension is equal to the product of the corresponding ramification and relative degrees.
    In particular, for any finite extension $F$ of $\F_p(t)$ in ${\bf K}$, we have
    $$
        \ord_q[F_v : \F_p(t)_v] = \ord_qe(v_{|F}/v_{|\F_p(t)})+\ord_qf(v_{|F}/v_{|\F_p(t)})
    $$
    Therefore, the order at $q$ of ramification and relative degrees
    is bounded if and only if the order at $q$ of finite local degrees is bounded, proving the equivalence of the first two properties. 

    Assume that the second condition holds. Let $\F_p(t) = K_0\subset K_1 \dots$ be a sequence of fields with $\cup_{i = 0}^\infty K_i = {\bf K}$ and let ${\bf K}_{{\bf v}, T}$ be the corresponding tower of completions. Let ${\tt F}$ be a finite extension of $\F_{p}(t)$ inside ${\bf K}$.
    By Proposition~\ref{prop:existalpha}, there is a finite extension $F/\F_p(t)$ so that $F_v\supseteq {\tt F}$.  Therefore, because $\ord_q[F_v : \F_p(t)_v]$ is bounded, we know that $\ord_q[{\tt F} : \F_p(t)_v]$ is bounded as well. This proves that ${\bf K}$ is $q$-bounded.

    Finally, if $F$ is a finite extension of $\F_p(t)$, then $F_v$ is a finite extension of $\F_p(t)_v$, so $q$-boundedness clearly implies the second property by definition.
\end{proof}

\begin{corollary}
   ${\bf K}$ is $q$-bounded if and only if on any path from $\F_p(t)$ to ${\bf K}$ through the factor tree of prime ideals, the order at $q$ of the ramification and relative degree is bounded.  We will refer to this version of $q$-boundedness as the {\it path condition}.
\end{corollary}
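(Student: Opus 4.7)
The plan is to observe that the corollary is essentially a restatement of Proposition~\ref{prop:equiv_q_bounded_defs} once we correctly identify paths in the factor tree with valuations of ${\bf K}$. Recall that ${\bf K}$ is $q$-bounded precisely when every valuation ${\bf v}$ of ${\bf K}$ is $q$-bounded (Definition~\ref{def:q-bounded_tower_of_completions}), so it suffices to check the equivalence one valuation at a time.

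First I would write ${\bf K} = \bigcup_i K_i$ as a union of finite extensions with $K_0 = \F_p(t)$ and note that specifying a path in the factor tree of $\F_p(t)$ through ${\bf K}$ is the same as specifying a compatible family of primes $\pp_{K_i} \subset K_i$ lying over the successive primes below, which is in turn the same as specifying a single valuation ${\bf v}$ on ${\bf K}$ together with its restrictions $v_i := {\bf v}_{|K_i}$. Thus "any path" ranges over the same data as "any valuation of ${\bf K}$."

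Next I would use the multiplicativity of ramification and relative degree in towers: for any finite extension $F \subset {\bf K}$ with $K_0 \subseteq K_1 \subseteq \dots \subseteq K_n = F$ a chain inside our tower,
\[
\ord_q e(v_{|F}/v_{|K_0}) = \sum_{i=0}^{n-1} \ord_q e(v_{i+1}/v_i),
\quad
\ord_q f(v_{|F}/v_{|K_0}) = \sum_{i=0}^{n-1} \ord_q f(v_{i+1}/v_i).
\]
Consequently, the sums $\sum_i \ord_q e(v_{i+1}/v_i)$ and $\sum_i \ord_q f(v_{i+1}/v_i)$ along the entire infinite path being bounded is exactly the condition that $\ord_q e(v_{|F}/v_{|K_0})$ and $\ord_q f(v_{|F}/v_{|K_0})$ be uniformly bounded as $F$ ranges over finite subextensions of $K_0$ in ${\bf K}$. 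By the identity $[F_v:(K_0)_v] = e(v_{|F}/v_{|K_0}) \cdot f(v_{|F}/v_{|K_0})$, this is in turn equivalent to $\ord_q[F_v:(K_0)_v]$ being bounded, which is condition \eqref{prop:equiv_q_bounded_defs:local_deg} of Proposition~\ref{prop:equiv_q_bounded_defs}.

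Finally I would invoke Proposition~\ref{prop:equiv_q_bounded_defs} to conclude that this path condition holds for the chosen path if and only if ${\bf v}$ is $q$-bounded, and since this holds for every valuation ${\bf v}$ of ${\bf K}$ if and only if it holds on every path, the corollary follows. There is no real obstacle here beyond carefully matching up vocabulary; the only thing to be slightly careful about is noting that the equivalence between "each summand $\ord_q e(v_{i+1}/v_i)$ being bounded with bounded partial sums" and "$\ord_q e(v_{|F}/v_{|K_0})$ being uniformly bounded in $F$" is immediate because the summands are non-negative integers, so a bounded monotone sequence of partial sums is exactly a sequence whose total is finite.
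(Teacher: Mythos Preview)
Your proposal is correct and follows the paper's approach: the paper states this corollary without proof, treating it as an immediate reformulation of Proposition~\ref{prop:equiv_q_bounded_defs} together with Definition~\ref{def:q-bounded_tower_of_completions}. Your argument spells out exactly this identification of paths with valuations ${\bf v}$ of ${\bf K}$, which is the only thing to check; note that you could have invoked condition~\eqref{prop:equiv_q_bounded_defs:path} directly (it already speaks of $\ord_q e$ and $\ord_q f$ along a fixed restriction $v_{|L}$) rather than passing through condition~\eqref{prop:equiv_q_bounded_defs:local_deg}, but either route is fine.
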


\subsection{Extensions of prime degree \texorpdfstring{$q$}{q} of a field of characteristic not \texorpdfstring{$q$}{q}} 
Fix two distinct primes $p$ and $q$.

\begin{proposition}
\label{prop:extq}
    If ${\bf L}_\pp$ is a $q$-bounded algebraic extension of
    a local function field $K_\pp$,
    then the algebraic closure of $\F_p$ in ${\bf L}_\pp$ has an extension of degree $q$.
\end{proposition}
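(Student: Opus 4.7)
The plan is to extract information about the algebraic closure ${\bf k}$ of $\F_p$ in ${\bf L}_\pp$ by relating its finite subextensions to unramified extensions of $K_\pp$ inside ${\bf L}_\pp$, and then invoking $q$-boundedness to bound the $q$-part of its (supernatural) degree over $\F_p$. Since extensions of $\F_p$ of degree a power of $q$ correspond to closed subgroups of $\Gal(\bar\F_p/\F_p) = \hat\Z$, the field ${\bf k}$ will admit a degree-$q$ extension as soon as the $q$-part of $[{\bf k}:\F_p]$ (viewed as a supernatural number) is finite.

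First, I would fix notation: let $\kappa$ denote the residue field of $K_\pp$, a finite extension of $\F_p$, identified as a subfield of $K_\pp$ via the canonical embedding $\kappa \hookrightarrow K_\pp \cong \kappa((\pi))$. Since every element of $\kappa$ is algebraic over $\F_p$ and lies in $K_\pp \subseteq {\bf L}_\pp$, we have $\kappa \subseteq {\bf k}$. Next, for any finite extension $\kappa'/\kappa$ with $\kappa' \subseteq {\bf k}$, consider the compositum $H_\pp := K_\pp \cdot \kappa' \subseteq {\bf L}_\pp$. This is an unramified extension of $K_\pp$ of degree $[H_\pp : K_\pp] = [\kappa' : \kappa]$.

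Now I apply $q$-boundedness: by Definition~\ref{def:q_bounded_local}, there exists $r \in \Z_{>0}$ such that $\ord_q[H_\pp : K_\pp] \leq r$ for every finite extension $H_\pp$ of $K_\pp$ contained in ${\bf L}_\pp$. Hence $\ord_q[\kappa' : \kappa] \leq r$ for every finite subextension $\kappa'$ of $\kappa$ inside ${\bf k}$. Writing ${\bf k} = \bigcup_{\kappa'} \kappa'$ as a directed union of such finite extensions, we conclude that the supernatural degree $[{\bf k} : \kappa]$ has $q$-part at most $q^r$. Since $[\kappa : \F_p]$ is finite, the $q$-part of $[{\bf k}:\F_p]$ is finite as well.

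Finally, I would translate this bounded $q$-part into the existence of a degree-$q$ extension: identifying ${\bf k}$ with the fixed field of a closed subgroup $H \subseteq \Gal(\bar\F_p/\F_p) \cong \hat{\Z}$, the group $H$ decomposes as $\prod_\ell \ell^{a_\ell}\Z_\ell$ for some supernatural exponents $a_\ell$, and we have just shown $a_q < \infty$. Thus $qH \subsetneq H$, meaning $H$ has an open subgroup of index $q$. The corresponding intermediate field is an extension of ${\bf k}$ of degree $q$, completing the proof. The step requiring the most care is the passage from the uniform finite bound on $\ord_q[\kappa':\kappa]$ for each finite $\kappa'$ to the finiteness of the supernatural $q$-part $\ord_q[{\bf k}:\F_p]$, but this is immediate once one recalls that supernatural numbers are determined by their finite subdivisors.
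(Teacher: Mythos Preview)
Your argument is correct, but it takes a different route from the paper's proof. The paper invokes Lemma~\ref{le:equivalent} to find a finite $H_\pp \subseteq {\bf L}_\pp$ above which all relative degrees are prime to $q$, then exhibits a concrete element $a \in \kappa_{H_\pp} \setminus \kappa_{H_\pp}^q$ and argues (via \cite{Lang_Algebra}*{Theorem~VI.9.1}) that $a$ remains a non-$q$-th power in every $\kappa_{E_\pp}$ and hence in $\kappa_{{\bf L}_\pp}$; since the algebraic closure of $\F_p$ in ${\bf L}_\pp$ sits inside $\kappa_{{\bf L}_\pp}$, the conclusion follows. Your approach instead embeds finite constant subfields $\kappa' \subseteq {\bf k}$ back into ${\bf L}_\pp$ via unramified extensions $K_\pp\cdot\kappa'$, reads off a uniform bound on $\ord_q[\kappa':\kappa]$ directly from Definition~\ref{def:q_bounded_local}, and then finishes with the structure of closed subgroups of $\hat\Z$. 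The paper's version is more constructive---it hands you an explicit non-$q$-th power, which is what is actually used downstream (e.g.\ in Lemma~\ref{le:choose_a_b_S_integers})---while yours is structurally cleaner and avoids the detour through Lemma~\ref{le:equivalent}. Both are perfectly valid; just be aware that the explicit element is the form in which the result gets consumed later.
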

\begin{proof}
    By Lemma~\ref{le:equivalent},
    there exists a finite extension $H_\pp$ of $K_\pp$, such that for any field $E_\pp$ with $H_\pp \subset E_\pp \subset {\bf L}_\pp$ and $E_\pp/H_\pp$ is finite we have that $\ord_q f(E_\pp/H_\pp)=0$.  
    Writing $\kappa_{H_\pp}$ and $\kappa_{E_\pp}$ for the residue fields of $H_\pp$ and $E_\pp$, respectively, we have 
    \[
    [\kappa_{E_\pp}:\F_p]=[\kappa_{H_\pp}:\F_p][\kappa_{E_\pp}:\kappa_{H_\pp}]
    \]
    and $\ord_q[\kappa_{E_\pp}:\F_p]=\ord_q[\kappa_{H_\pp}:\F_p]$.  

    Let $a \in \kappa_{H_\pp}\setminus (\kappa_{H_\pp})^q$.  Then $a\in  \kappa_{E_\pp}\setminus (\kappa_{E_\pp})^q$ by \cite[Theorem~VI.9.1]{Lang_Algebra}.  
    Suppose now that $a$ is a $q$-th power in the residue field $\kappa_{{\bf L}_\pp}$ of ${\bf L}_\pp$.  Then there exists a field $E_\pp$ as above such that $a \in \kappa_F^q$, leading to a contradiction.  Therefore $\kappa_{{\bf L}_\pp}$ contains elements that are not $q$-th powers in the field. Since $\F_{\bf L_\pp} \subset \kappa_{{\bf L}_\pp}$, we have that $\F_{\bf L_\pp}$ has extensions of degree $q$.
\end{proof}

\begin{corollary}
\label{cor:extq}
 If ${\bf K}$  is a $q$-bounded extension of $\F_p(t)$, then the algebraic closure of $\F_p$ in ${\bf K}$ has an extension of degree $q$.
\end{corollary}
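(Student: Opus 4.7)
The plan is to reduce the corollary to the local Proposition~\ref{prop:extq} via the tower of completions. Fix any valuation ${\bf v}$ of ${\bf K}$, and let $v$ denote its restriction to $\F_p(t)$. Because ${\bf K}$ is $q$-bounded, the valuation ${\bf v}$ is $q$-bounded in the sense of Definition~\ref{def:q-bounded_tower_of_completions}, meaning that the tower of completions ${\bf K}_{T,{\bf v}}$ is a $q$-bounded algebraic extension of the local function field $\F_p(t)_v$ in the sense of Definition~\ref{def:q_bounded_local}. Proposition~\ref{prop:extq} therefore applies directly and shows that the algebraic closure of $\F_p$ inside ${\bf K}_{T,{\bf v}}$, which I will denote ${\bf k}_T$, admits an extension of degree $q$.

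The second step is to transfer this conclusion from ${\bf k}_T$ to the algebraic closure ${\bf k}$ of $\F_p$ in ${\bf K}$. Definition~\ref{def:tower_completion} supplies an embedding $\iota:{\bf K}\hookrightarrow {\bf K}_{\bf v}$, and by construction $\iota({\bf K})=\bigcup_i \iota(K_i)\subseteq \bigcup_i (\iota(K_i))_{v_i}={\bf K}_{T,{\bf v}}$. Restricting $\iota$ to ${\bf k}$ and noting that $\iota({\bf k})$ is an algebraic extension of $\F_p$ inside ${\bf K}_{T,{\bf v}}$, we obtain an inclusion ${\bf k}\hookrightarrow {\bf k}_T$ of algebraic extensions of $\F_p$.

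Finally, I would prove that the property ``admits an extension of degree $q$'' descends from ${\bf k}_T$ to its subfield ${\bf k}$. I would argue this by contrapositive: if ${\bf k}$ had no extension of degree $q$, then $\F_{p^{q^n}}\subseteq {\bf k}$ for every $n\geq 1$. This uses the classification of algebraic extensions of $\F_p$ by supernatural numbers (equivalently, by closed subgroups of $\widehat{\Z}=\mathrm{Gal}(\overline{\F_p}/\F_p)$): a subfield $F\subseteq \overline{\F_p}$ fails to admit an extension of degree $q$ if and only if the $q$-adic component of its supernatural degree is infinite, which is equivalent to $F\supseteq \bigcup_n \F_{p^{q^n}}$. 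Since ${\bf k}\subseteq {\bf k}_T$, this would force $\F_{p^{q^n}}\subseteq {\bf k}_T$ for all $n$, contradicting the fact that ${\bf k}_T$ has an extension of degree $q$.

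The reduction to the local setting and the appeal to Proposition~\ref{prop:extq} are essentially formal; the only step requiring any care is the final descent argument, which hinges on the supernatural-number description of algebraic extensions of $\F_p$.
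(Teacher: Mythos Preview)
Your proof is correct and follows essentially the same approach as the paper: pick a valuation, apply Proposition~\ref{prop:extq} to the associated local extension to conclude that the algebraic closure of $\F_p$ there has a degree-$q$ extension, and then descend via the inclusion of constant fields. The paper's proof is terser---it simply cites the containment ${\bf K}\subset{\bf K}_{\pp_{\bf K}}$ and declares the assertion follows---whereas you are more careful both in working with the tower of completions (the object to which Proposition~\ref{prop:extq} literally applies) and in spelling out the descent step via supernatural numbers.
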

\begin{proof}
Let $\pp_K$ be any valuation of ${\bf K}$. Since the algebraic closure of $\F_p$ in ${\bf K}_{\pp_{\bf K}}$ has an extension of degree $q$ by Proposition \ref{prop:extq} and ${\bf K} \subset {\bf K}_{\pp_{\bf K}}$, the assertion of the corollary follows.
\end{proof}

\subsection{Uniformly of \texorpdfstring{$q$}{q}-bounded primes and fields}

For some of our results, we need even more control on the behavior of primes in the infinite algebraic extension ${\bf K}$ of $\F_{p}(t)$. 
This leads to our notion of \emph{uniform} bounds.
As we will see, this uniformity is automatic in the case of Galois extensions.

\begin{definition}[Uniform $q$-boundedness]
\label{def:uniform_bounded}
 Let ${\bf K}$ be an algebraic extension of a global function field $K$ and let $\pp_K$ be a prime of $K$. In each of the following, we consider supremums where the arguments range over all finite extensions $\hat K/K$ and all primes $\pp_{\hat K}$ of $\hat K$ lying over $\pp_K$.
 \begin{enumerate}[(a)]
     \item Say that  $\pp_K$ has \emph{uniformly $q$-bounded ramification} if there is some $B \in \N$ so that $\sup\{\ord_q e(\pp_{\hat K}/\pp_K)\}\leq B$.
     \item  If the stronger condition $\sup\{e(\pp_{\hat K}/\pp_K)\}\leq B$ holds for some $B \in \N$, then we say that $\pp_K$ has \emph{uniformly absolutely bounded ramification}.
     \item Analogously, say that  $\pp_K$ has \emph{uniformly $q$-bounded relative degree} if there is some $B \in \N$ so that $\sup\{\ord_q f(\pp_{\hat K}/\pp_K)\}\leq B$.
     \item If ${\bf K}/K$ has both uniformly $q$-bounded ramification and uniformly $q$-bounded relative degree at $\pp_K$, then we say ${\bf K}/K$ is uniformly $q$-bounded at $\pp_K$.
     If ${\bf K}/K$ is uniformly $q$-bounded at all primes of $K$, then we say that the extension is uniformly $q$-bounded.
 \end{enumerate}
\end{definition}

\begin{remark}
    The presentation of $q$-boundedness described in Proposition~\ref{prop:equiv_q_bounded_defs}.\eqref{prop:equiv_q_bounded_defs:path} is similar to the definitions of uniform boundedness given above, except that Proposition~\ref{prop:equiv_q_bounded_defs}.\eqref{prop:equiv_q_bounded_defs:path} restricts to only one factor path at a time.
    Indeed, ${\bf K}/K$ is a $q$-bounded extension if, for every prime $\pp_{\bf K}$ of ${\bf K}$, there is a $B\in \N$ so that 
    \begin{equation}
        \label{rem:q_bd_uniform}
    \begin{tabular}{ccc}
        $\sup\{\ord_q e(\pp_{\bf K}\cap \hat K/\pp_K)\} \leq B $
       & \text{ and   } &
        $\sup\{\ord_q f(\pp_{\bf K}\cap \hat K/\pp_K)\} 
        \leq B$
    \end{tabular}
    \end{equation}
    where $\hat K$ ranges over all finite extensions of $K$ in ${\bf K}$ and $\pp_K := \pp_{\bf K}\cap K$. 
    In contrast, \emph{uniform} {$q$-boundedness} of ramification and relative degree means that there is a single bound $B$ so that the respective part of \eqref{rem:q_bd_uniform} holds for all primes $\pp_{\bf K}$ in ${\bf K}$ over $\pp_K$ simultaneously.
\end{remark}

Although some $q$-bounded primes may fail to be uniformly $q$-bounded, the following result shows that this technical difficulty can be mostly avoided by passing to a finite extension.

\begin{proposition}
    Let $K$ be a finite extension of $\F_p(t)$, and let ${\bf K}$ be an algebraic (possibly infinite) extension of $K$.
    If $\pp_K$ is a $q$-bounded prime of $K$, then there exists a finite (possibly trivial) extension $L$ of $K$ inside ${\bf K}$ and an  $L$-prime $\pp_L$ over $\pp_K$ such that $e(\pp_M/\pp_L)f(\pp_M/\pp_L) \not\equiv 0 \bmod q$ for every finite extension $M$ of $L$ inside ${\bf K}$ and every $M$-prime $\pp_M$ over $\pp_L$. In particular, ${\bf K}/L$ is uniformly $q$-bounded at $\pp_L$.
    \end{proposition}
    \begin{proof}
    We first consider ramification.
    Assume for the sake of contradiction that for every finite extension $L/K$ and every $L$-prime $\pp_L$ over $\pp_K$ there is a finite extension $M/L$ and an $M$-prime $\pp_M$ over $\pp_L$ so that 
    \begin{equation}
        e(\pp_M/\pp_L) \equiv 0 \bmod q. \label{eq:ram_ML}
    \end{equation} We apply this assumption repeatedly as follows. Starting with $L := K$, write $M_1$ and $\pp_{M_1}$ for the field $M$ and prime $\pp_M$, respectively, that make Equation~\ref{eq:ram_ML} true. Similarly, after the construction of $M_i$, write $M_{i+1}$ and $\pp_{M_{i+1}}$ for the field and prime which make Equation~\ref{eq:ram_ML} true when applying the assumption with $L := M_i$. This explicitly constructs a tower $K \subseteq M_1 \subseteq M_2\subseteq \dots \subseteq {\bf K}$ so that $e(\pp_{M_{i+1}}/\pp_{M_i}) \equiv 0\bmod q$ for all $i$, and therefore $\ord_q e(\pp_{M_i}/\pp_K) \geq i$ for all $i$. Therefore, $\pp_K$ is not $q$-bounded by definition. 

    Since the proposition assumes that $\pp_K$ is $q$-bounded, we deduce that there is an extension $L/K$ and an $L$-prime $\pp_L$ over $\pp_K$ so that $e(\pp_M/\pp_L) \not\equiv 0 \bmod q$ for every finite extension $M$ of $L$ inside ${\bf K}$ and every $M$-prime $\pp_M$ over $\pp_L$. By repeating the same argument, we obtain the same with relative degree $f(\pp_M/\pp_L)$ replacing ramification $e(\pp_M/\pp_L)$. The proof is completed by combining these statements.
    \end{proof}

    \begin{corollary}
    \label{cor:existunifqbound}
If ${\bf K}$ be a $q$-bounded extension of $\F_p(t)$, then there are infinitely many pairs $(K, \pp_K)$, where $K/\F_p(t)$ is a finite extension and $\pp_K$ is a uniformly $q$-bounded prime in ${\bf K}$.
    \end{corollary}

In some cases, we can determine directly that a prime is uniformly $q$-bounded without passing to an extension. We have two results of this flavor.

\begin{lemma}
    Let ${\bf K}/K$ be an algebraic extension. If $\pp_K$ is a prime of $K$ which is $q$-bounded and has only finitely many factors in ${\bf K}$, then $\pp_K$ is uniformly $q$-bounded.
\end{lemma}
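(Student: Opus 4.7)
The plan is to leverage the path-condition reformulation of $q$-boundedness from Proposition~\ref{prop:equiv_q_bounded_defs}, and then use the finiteness of factors above $\pp_K$ to collapse the family of per-path bounds into a single uniform bound. The crucial point is the standard fact that every prime of a finite intermediate extension $\hat K/K$ lying above $\pp_K$ is the restriction of at least one prime of ${\bf K}$ above $\pp_K$.

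First, enumerate the finitely many primes of ${\bf K}$ above $\pp_K$ as $\pp_{\bf K}^{(1)},\dots,\pp_{\bf K}^{(n)}$. By hypothesis, each $\pp_{\bf K}^{(i)}$ is $q$-bounded, so Proposition~\ref{prop:equiv_q_bounded_defs} supplies, for each $i$, an integer $B_i$ such that
\[
\ord_q e\bigl(\pp_{\bf K}^{(i)}\cap \hat K \,/\, \pp_K\bigr) \leq B_i \quad \text{and} \quad \ord_q f\bigl(\pp_{\bf K}^{(i)}\cap \hat K \,/\, \pp_K\bigr) \leq B_i
\]
for every finite extension $\hat K/K$ inside ${\bf K}$. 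I would then set $B := \max_i B_i$, which is finite because the index set is finite.

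Next, given any finite extension $\hat K/K$ inside ${\bf K}$ and any prime $\pp_{\hat K}$ of $\hat K$ above $\pp_K$, the plan is to lift $\pp_{\hat K}$ to a prime of ${\bf K}$ above it. Since primes of ${\bf K}$ above $\pp_K$ are exhausted by the list $\pp_{\bf K}^{(1)},\dots,\pp_{\bf K}^{(n)}$, this lift must coincide with some $\pp_{\bf K}^{(i)}$, and hence $\pp_{\hat K}=\pp_{\bf K}^{(i)}\cap \hat K$. Applying the inequality from the previous paragraph immediately yields $\ord_q e(\pp_{\hat K}/\pp_K)\leq B$ and $\ord_q f(\pp_{\hat K}/\pp_K)\leq B$, which is precisely uniform $q$-boundedness in the sense of Definition~\ref{def:uniform_bounded}.

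There is no real obstacle here: the only step requiring any care is the existence of an extension of $\pp_{\hat K}$ to a prime of ${\bf K}$, which is standard (e.g., one extends the associated valuation ring by Zorn's lemma to a valuation ring of ${\bf K}$ lying above $\pp_K$, which by construction is one of the finitely many listed). Everything else is taking a maximum over a finite set and invoking the equivalence already established in Proposition~\ref{prop:equiv_q_bounded_defs}.
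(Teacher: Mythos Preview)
Your proposal is correct and follows essentially the same approach as the paper: enumerate the finitely many factors in ${\bf K}$, take the maximum of the per-path bounds supplied by $q$-boundedness, and conclude. The paper's proof is terser and leaves implicit the lifting step (that every intermediate prime over $\pp_K$ is the restriction of some $\pp_{\bf K}^{(i)}$), which you rightly make explicit.
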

\begin{proof}
    Because $\pp_K$ is $q$-bounded, we know that for each prime $\pp_{\bf K}$ of ${\bf K}$ over $\pp_K$ there is a bound $B$ so that \eqref{rem:q_bd_uniform} is satisfied. Since there are only finitely many factors $\pp_{\bf K}$ of $\pp_K$ in ${\bf K}$ by assumption, it is clear that the maximum all such $B$ is a finite number, and the prime $\pp_K$ is uniformly $q$-bounded in ${\bf K}$ by definition.
\end{proof}

Finally, in some cases all primes are uniformly $q$-bounded.

\begin{proposition}
\label{prop:Galois_q-bounded}
    Let $K/\F_p(t)$ be a finite extension such that ${\bf K}/K$ is Galois. 
    If ${\bf K}$ is $q$-bounded at a prime $\pp_K$ of $K$, then it is uniformly $q$-bounded at $\pp_K$.
\end{proposition}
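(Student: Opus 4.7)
The approach is to exploit the transitivity of the Galois action on primes above $\pp_K$ in order to transform a bound along one path in the factor tree of $\pp_K$ into a simultaneous bound along every path. My plan uses the path formulation of $q$-boundedness from Proposition~\ref{prop:equiv_q_bounded_defs}.\eqref{prop:equiv_q_bounded_defs:path}.

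First, I fix an arbitrary prime $\pp_{\bf K}$ of ${\bf K}$ lying over $\pp_K$. By $q$-boundedness at $\pp_K$ combined with the path condition, there exists a constant $B\in\N$ such that for every finite extension $\hat K/K$ with $\hat K\subseteq {\bf K}$,
\[
\ord_q e\bigl((\pp_{\bf K}\cap \hat K)/\pp_K\bigr)\leq B
\quad\text{and}\quad
\ord_q f\bigl((\pp_{\bf K}\cap \hat K)/\pp_K\bigr)\leq B.
\]
I claim this single $B$ is the uniform bound witnessing Definition~\ref{def:uniform_bounded}.

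Given any finite extension $\hat K/K$ inside ${\bf K}$ and any prime $\pp_{\hat K}$ of $\hat K$ lying over $\pp_K$, I extend $\pp_{\hat K}$ to a prime $\pp_{\bf K}'$ of ${\bf K}$ by Zorn's lemma applied to valuation rings. Since ${\bf K}/K$ is Galois, the group $\Gal({\bf K}/K)$ acts transitively on the primes of ${\bf K}$ above $\pp_K$, so there exists $\sigma\in\Gal({\bf K}/K)$ with $\sigma(\pp_{\bf K}) = \pp_{\bf K}'$. Setting $\hat K^* := \sigma^{-1}(\hat K)$ yields another finite extension of $K$ inside ${\bf K}$ of the same degree as $\hat K$, and by construction $\sigma(\pp_{\bf K}\cap \hat K^*) = \pp_{\bf K}'\cap \hat K = \pp_{\hat K}$. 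Because $\sigma$ restricts to a $K$-isomorphism $\hat K^*\xrightarrow{\sim} \hat K$ carrying $\pp_{\bf K}\cap \hat K^*$ to $\pp_{\hat K}$, the ramification and relative degrees over $\pp_K$ are preserved:
\[
e(\pp_{\hat K}/\pp_K) = e\bigl((\pp_{\bf K}\cap \hat K^*)/\pp_K\bigr),
\qquad
f(\pp_{\hat K}/\pp_K) = f\bigl((\pp_{\bf K}\cap \hat K^*)/\pp_K\bigr).
\]
Since $\hat K^*$ is itself a finite subextension of ${\bf K}/K$, the choice of $B$ above yields $\ord_q e(\pp_{\hat K}/\pp_K)\leq B$ and $\ord_q f(\pp_{\hat K}/\pp_K)\leq B$, which is exactly uniform $q$-boundedness at $\pp_K$.

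The main technical point to verify carefully is the transitivity of $\Gal({\bf K}/K)$ on the primes of ${\bf K}$ over $\pp_K$ in the possibly infinite Galois setting. This is a standard fact, obtained from the finite case via Zorn's lemma (or equivalently an inverse limit over finite Galois subextensions): for any two primes of ${\bf K}$ above $\pp_K$, one constructs compatible automorphisms of every finite Galois subextension $L/K$ interchanging the restrictions, then uses compactness of $\Gal({\bf K}/K)$ to glue these into a single automorphism. Everything else in the argument reduces to the elementary observation that $K$-isomorphisms preserve ramification indices and residue field degrees.
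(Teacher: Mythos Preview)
Your proof is correct and takes a genuinely different route from the paper. The paper's argument stays entirely at the level of finite subextensions: it writes ${\bf K}$ as a union of finite \emph{Galois} subextensions $L_i/K$ (taking Galois closures inside ${\bf K}$), fixes one path $\ttt_{L_i}=\ttt_{\bf K}\cap L_i$ to obtain the bound $B$, and then uses only the classical fact that in a finite Galois extension all primes above $\pp_K$ share the same ramification and relative degree. Any $\hat K$ is absorbed into some $L_i$ by multiplicativity in towers. By contrast, you work directly with the infinite Galois group, invoking transitivity of $\Gal({\bf K}/K)$ on primes above $\pp_K$ and transporting the bound along $\sigma$ via the conjugate subfield $\hat K^*=\sigma^{-1}(\hat K)$. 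Your approach is more conceptual and avoids the bookkeeping of the tower $\{L_i\}$, at the cost of appealing to the infinite-extension transitivity (which, as you note, is a compactness argument). The paper's approach is more elementary in that it never leaves the finite setting, and it makes explicit the reduction to a cofinal family of Galois subextensions---a device reused elsewhere in the paper.
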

\begin{proof}
    Since ${\bf K}/K$ is algebraic, we may write ${\bf K} = \cup_{j = 0}^\infty K_i$ where $K = K_0 \subseteq K_1 \subseteq \dots$ is an increasing tower of finite extensions of $K$. 
    Because ${\bf K}/K$ is Galois, we observe that each field $K_j$ is separable over $K$, and the Galois closure $L_j$ of $K_j$ over $K$ is contained in ${\bf K}$.
    Therefore, ${\bf K} = \cup_j^\infty L_j$ is represented by a tower of finite Galois extensions of $K$.
    
    Fix a prime $\ttt_{\bf K}$ of ${\bf K}$ over $\pp_K$ and define $\ttt_{L_i} := \ttt_{\bf K}\cap L_i$ for all $i$.  Because $\pp_K$ is $q$-bounded, we know that there is $B$ so that 
        $\sup_{i\in \N}\{\ord_q e(\ttt_{L_i}/\pp_K)\} \leq B $
        and 
        $\sup_{i\in \N}\{\ord_q f(\ttt_{L_i}/\pp_K)\} \leq B$; see Proposition~\ref{prop:equiv_q_bounded_defs}.

    Let $\hat K$ be any finite extension of $K$ in ${\bf K}$ and let $\pp_{\hat K}$ be a prime of $\hat K$ over $\pp_K$. 
    Because ramification indices and relative degrees grow multiplicatively in towers, we may assume without loss of generality that $\hat K = L_i$ for some $i$.
    But ramification and relative degrees are equal across all primes of $L_i$ lying over $\pp_K$ because the extension $L_i/K$ is Galois. 
    Therefore, $\ord_q e(\pp_{\hat K}/\pp_K) = \ord_q e(\ttt_{L_i}/\pp_K)\leq B$ and similarly $\ord_q f(\pp_{\hat K}/\pp_K)\leq B$, as desired.
 \end{proof}

The following relates uniform $q$-boundedness to the simpler notion of global $q$-boundedness, as defined in Definition~\ref{def:global_q_bounded}.
 \begin{proposition}
    \label{prop:global_to_uniform_q}
     Let $K/\F_p(t)$ be a finite extension and ${\bf K}/K$ an algebraic Galois extension. If ${\bf K}$ is globally $q$-bounded, then it is uniformly $q$-bounded.
 \end{proposition}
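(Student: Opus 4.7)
The plan is to reduce to the case of finite Galois subextensions of ${\bf K}/K$ and then exploit the fundamental identity $efg = [L:K]$, which holds for Galois extensions $L/K$ and makes $\ord_q e$ and $\ord_q f$ at a fixed prime $\pp_K$ directly comparable to $\ord_q[L:K]$. The hypothesis of global $q$-boundedness bounds this last quantity uniformly by $D$.

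First I would observe, as in the proof of Proposition~\ref{prop:Galois_q-bounded}, that ${\bf K}$ is a directed union $\bigcup_i L_i$ of finite Galois extensions $L_i/K$ contained in ${\bf K}$: indeed, writing ${\bf K}$ as a union of finite (possibly non-Galois) extensions, the Galois closure in ${\bf K}$ of each such extension exists and lies in ${\bf K}$ because ${\bf K}/K$ is Galois. So for any finite $\hat K/K$ with $\hat K\subset {\bf K}$, there is a finite Galois extension $L/K$ with $\hat K\subseteq L\subset {\bf K}$.

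Next, fix a prime $\pp_K$ of $K$ and consider any finite Galois extension $L/K$ inside ${\bf K}$. For a prime $\pp_L$ of $L$ over $\pp_K$, let $e = e(\pp_L/\pp_K)$, $f = f(\pp_L/\pp_K)$, and let $g$ be the number of primes of $L$ above $\pp_K$. Because $L/K$ is Galois, the pair $(e,f)$ is independent of the choice of $\pp_L$, and $efg = [L:K]$. Taking $\ord_q$ and using global $q$-boundedness yields
\[
\ord_q e(\pp_L/\pp_K) + \ord_q f(\pp_L/\pp_K) \leq \ord_q[L:K] \leq D.
\]
In particular $\ord_q e(\pp_L/\pp_K) \leq D$ and $\ord_q f(\pp_L/\pp_K) \leq D$, with bounds independent of $L$ and of the chosen prime $\pp_L$.

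Finally, to pass from Galois subextensions to arbitrary finite subextensions, let $\hat K/K$ be any finite subextension of ${\bf K}$, let $\pp_{\hat K}$ be any prime of $\hat K$ above $\pp_K$, and choose a finite Galois $L/K$ with $\hat K\subseteq L\subset{\bf K}$ by the first step, together with a prime $\pp_L$ of $L$ over $\pp_{\hat K}$. By multiplicativity of ramification and relative degrees in the tower $K \subseteq \hat K \subseteq L$, the integers $e(\pp_{\hat K}/\pp_K)$ and $f(\pp_{\hat K}/\pp_K)$ divide $e(\pp_L/\pp_K)$ and $f(\pp_L/\pp_K)$ respectively. Hence $\ord_q e(\pp_{\hat K}/\pp_K) \leq D$ and $\ord_q f(\pp_{\hat K}/\pp_K) \leq D$. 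Since $\pp_K$, $\hat K$, and $\pp_{\hat K}$ were arbitrary, the constant $B = D$ witnesses uniform $q$-boundedness at every prime of $K$, completing the proof. There is no real obstacle here; the only subtlety is remembering to take Galois closures so that the identity $efg=[L:K]$ applies.
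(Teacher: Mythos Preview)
Your proof is correct and follows essentially the same approach as the paper: both reduce to finite Galois subextensions of ${\bf K}/K$ and then use that in a Galois extension $e$ and $f$ divide the degree (the paper states this divisibility directly, you derive it from $efg=[L:K]$). Your final paragraph makes explicit the ``without loss of generality $\hat K$ is Galois'' step that the paper leaves implicit, but otherwise the arguments coincide.
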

 \begin{proof}
     Let $K = L_0 \subseteq L_1 \subseteq \dots$ be a tower of Galois extensions such that $\cup_{j = 0}^\infty L_j = {\bf K}$.
     Fix a prime $\pp_K$ of $K$.
     Let $\hat K$ be any finite extension of $K$ inside ${\bf K}$.
     Without loss of generality, we may assume $\hat K$ is Galois over $K$.
     Let $\pp_{\hat K}$ be any prime of $\hat K$ lying over $\pp_K$.
     Because $\hat K$ is Galois over $K$, we have that $e(\pp_{\hat K}/\pp_{ K})$ and $f(\pp_{\hat K}/\pp_{ K})$ both divide the degree $[\hat K : K]$.
     Since ${\bf K}$ is globally $q$-bounded, we have $D = \sup\{\ord_q[\hat K : K]\} < \infty$, and we conclude that ${\bf K}$ is uniformly $q$-bounded by definition. 
 \end{proof}

\section{Defining the integral closure of a valuation ring in an infinite \texorpdfstring{$q$}{q}-bounded extension}
\label{sec:def_val_ring}

Throughout this section, let ${\bf K}$ be an algebraic extension of a global function field $K$ and let $\pp_K$ be a prime of $K$. 
The goal of this section is to define the integral closure of the valuation ring $\OO_{\pp_K}$ in ${\bf K}$ by leveraging the global results of Section~\ref{sec:global}.
As always, we assume without loss of generality that $K$ contains a primitive $q$-th root of unity; see Corollary~\ref{cor:zeta_in_K}.

Recall that we provided a diophantine definition of valuations rings of global fields in Corollary~\ref{cor:val_ring_global}. 
In this section, we extend this machinery to prove analogous results for certain infinite algebraic extensions of global function fields.
Under the most restrictive hypotheses, we obtain a diophantine definition of the integral closure of a valuation ring. 
This includes the case where ${\bf K}$ is a $q$-bounded algebraic extension of a global field $K$.
Under more general hypotheses, it is a first-order definition. 
We start with a lemma that provides the existence of the necessary auxiliary elements $a$ and $b$ which will be used to define a norm equation.

\begin{lemma}
\label{le:ab}
If $\pp_K$ has uniformly $q$-bounded ramification and relative degree in ${\bf K}$, then there exists a finite extension $F/K$ with $F \subset {\bf K}$ and $a, b \in F$ such that for any finite extension $\hat F/F$ inside ${\bf K}$ we have that $a$ and $b$ satisfy the following conditions for any factor $\pp_{\hat F}$ of $\pp_K$ in $\hat F$:
\begin{enumerate}
    \item $\ord_{\pp_{\hat F}}b<0$ and $\ord_{\pp_{\hat F}}b\not\equiv 0\bmod q$;
    \item $a$ is a unit at $\pp_{\hat F}$ and is not a $q$-th power modulo $\pp_{\hat F}$;
    \item $a \equiv 1$ modulo any prime occurring in the divisor of $b$ not lying above $\pp_K$.
\end{enumerate}
Moreover, if we make the further assumption that $\pp_K$ has uniformly absolutely bounded ramification, then we also obtain the following stronger condition:
\begin{enumerate}
    \item[(1')] $\ord_{\pp_{\hat F}}b=-1$;
\end{enumerate}
\end{lemma}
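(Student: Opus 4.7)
The strategy is to find a finite base field $F \subseteq {\bf K}$ large enough that the $q$-parts of ramification and relative degree at primes above $\pp_K$ cannot grow further, and then use Weak Approximation in $F$ to construct $a$ and $b$. The key point is that once $F$ is chosen appropriately, any further finite extension $\hat F/F$ contributes only ramification and residue degree coprime to $q$ at each prime above $\pp_K$, which lets local conditions verified inside $F$ be lifted to $\hat F$.

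To produce $F$, let $B_e$ and $B_f$ denote the finite suprema defining uniform $q$-boundedness at $\pp_K$; both are attained since the values are nonnegative integers. Starting from $K$, I would iteratively enlarge to a finite $F/K$ inside ${\bf K}$ so that eventually every prime $\pp_F$ of $F$ above $\pp_K$ realizes $\ord_q e(\pp_F/\pp_K)=B_e$ and $\ord_q f(\pp_F/\pp_K)=B_f$, by adjoining at each stage a witness to the uniform maximum at an unresolved prime. Once a branch has reached the uniform maximum, the multiplicativity of $e$ and $f$ in towers combined with uniform $q$-boundedness forces $\ord_q e(\pp_{\hat F}/\pp_F) = \ord_q f(\pp_{\hat F}/\pp_F) = 0$ for every further $\hat F/F$ in ${\bf K}$ and every $\pp_{\hat F} \mid \pp_F$.

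With $F$ in hand, I would apply Weak Approximation in $F$ (in the spirit of Lemma~\ref{le:exists}) to produce $b \in F$ with $\ord_{\pp_F}b = -1$ simultaneously at every prime $\pp_F \mid \pp_K$ of $F$, and $a \in F$ that is a unit with non-$q$-th-power residue at each such $\pp_F$ and congruent to $1$ modulo every prime of $F$ in the divisor of $b$ that does not lie over $\pp_K$. Non-$q$-th powers exist in each $\kappa_{\pp_F}$ because $\zeta_q \in K$ (Assumption~\ref{as:zeta_in_K}) reduces to a primitive $q$-th root of unity since $q \ne p$, so $q \mid |\kappa_{\pp_F}^\times|$. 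For a further extension $\hat F \supseteq F$ with prime $\pp_{\hat F} \mid \pp_F$: stabilization gives both $e(\pp_{\hat F}/\pp_F)$ and $[\kappa_{\pp_{\hat F}} : \kappa_{\pp_F}]$ coprime to $q$, so $\ord_{\pp_{\hat F}}b = -e(\pp_{\hat F}/\pp_F)$ is negative and coprime to $q$, and a short finite-field calculation using the criterion $a^{(|\kappa|-1)/q} \neq 1$ shows that a non-$q$-th power in $\kappa_{\pp_F}$ remains a non-$q$-th power in $\kappa_{\pp_{\hat F}}$ when the residue extension has degree coprime to $q$. Condition~(3) lifts because any prime of $\hat F$ in the divisor of $b$ restricts to a prime of $F$ already in the divisor of $b$.

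For the stronger statement under uniformly absolutely bounded ramification, the same procedure applies using the absolute ramification index $e$ in place of $\ord_q e$: the resulting $F$ satisfies $e(\pp_{\hat F}/\pp_F) = 1$ for every further $\hat F$, so $\ord_{\pp_F}b = -1$ lifts verbatim to $\ord_{\pp_{\hat F}}b = -1$. The main obstacle is the construction of the stabilized $F$ in the second paragraph: enlarging $F$ to resolve one prime above $\pp_K$ can split other primes and thereby alter the set of unresolved branches, so a careful invariant on the factor tree is needed (for example a descent on $\sum_{\pp_F \mid \pp_K} \bigl((B_e - \ord_q e(\pp_F/\pp_K)) + (B_f - \ord_q f(\pp_F/\pp_K))\bigr)$, which is non-increasing under enlargement and drops at the chosen branch) in order to guarantee termination.
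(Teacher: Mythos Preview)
Your overall plan—find a stabilization field $F$, use Weak Approximation in $F$ to build $a,b$, then check persistence in every $\hat F\supseteq F$—is precisely the paper's approach, and your lifting arguments for (1)--(3) and (1') are correct. The paper simply asserts that such an $F$ exists ``by definition'' without further justification; you go further and attempt to construct $F$ by an iterative descent, and you correctly identify the obstacle that enlarging $F$ may split primes and alter the unresolved set.

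Your proposed fix, however, does not work: the invariant $\sum_{\pp_F\mid\pp_K}\bigl((B_e-\ord_q e(\pp_F/\pp_K))+(B_f-\ord_q f(\pp_F/\pp_K))\bigr)$ is \emph{not} non-increasing. If an unresolved prime $\pp_F$ splits in $F'$ into $g>1$ primes that are each still unresolved, its contribution is replaced by $g$ equal terms and the sum strictly grows. Nor does the sum necessarily drop when you resolve a chosen branch, because the only enlargements available lie inside ${\bf K}$, and ${\bf K}$ may force the chosen prime to split into one resolved factor and one new unresolved factor, leaving the sum unchanged. Concretely, in a non-Galois tower $K=K_0\subset K_1\subset\cdots$ where at each stage the sole unresolved prime $\pp_n$ above $\pp_K$ splits in $K_{n+1}$ into $\qq_{n+1}$ with $e(\qq_{n+1}/\pp_n)=q$ and $\pp_{n+1}$ with $e=1$, while all previously resolved primes acquire only $e,f$ prime to $q$, the bound $B_e=1$ holds uniformly yet your invariant stays equal to $1$ forever. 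In this ${\bf K}$ no finite $F$ stabilizes at all: for any $F\subset K_n$ the prime of $F$ below $\pp_n$ has a factor $\qq_{n+1}$ in $K_{n+1}\supseteq F$ with $e$ divisible by $q$, so no $b\in F$ can satisfy (1) in every $\hat F$. The existence of the stabilization field thus appears to require an extra hypothesis—for instance ${\bf K}/K$ Galois, where all factors of $\pp_K$ at each level share the same $e$ and $f$ (cf.\ Proposition~\ref{prop:Galois_q-bounded})—rather than a sharper invariant.
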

\begin{proof}
    By definition of uniformly $q$-bounded ramification and relative degree there exists a finite extension $F/K$ with $F \subset {\bf K}$ such that the following holds for any factor $\pp_F$ of $\pp_K$ in $F$: for any finite extension $\hat F/F$ and any factor $\pp_{\hat F}$ of $\pp_F$ we have that $e(\pp_{\hat F}/\pp_F)$ and $f(\pp_{\hat F}/\pp_F)$ are prime to $q$. Under the stronger assumption of uniformly absolutely bounded ramification, we can assume that $e(\pp_{\hat F}/\pp_F)=1$.
    
    By the Weak Approximation Theorem (see \cite[Theorem II.1.1]{Lang94}), there exist $(a,b) \in F^2$ such that for any factor $\pp_F$ of $\pp_K$ in $F$ we have that 
    \begin{enumerate}
    \item $\ord_{\pp_{F}}b=-1$;
    \item $a \equiv 1$ modulo any prime occurring in the divisor of $b$ not lying above $\pp_K$;
    \item $a$ is a unit at $\pp_F$ and is not a $q$-th power modulo $\pp_{F}$.
\end{enumerate}
The lemma follows from observing these conditions as we go up from $F$ to ${\bf K}$.  

Let $\hat F/F$ be a finite extension inside ${\bf K}$ and let $\pp_{\hat F}$ be any factor of any $\pp_F$ in $\hat F$.  
By the definition of $F$, we see that 
$$
    \ord_{\pp_{\hat F}}b=e(\pp_{\hat F}/\pp_F)\ord_{\pp_{F}}b =-e(\pp_{\hat F}/\pp_F)
$$
is coprime to $q$, and is actually equal to $-1$ under the stronger condition of uniformly absolutely bounded ramification.
The equivalence $a \equiv 1$ modulo any prime occurring in the divisor of $b$ not lying above $\pp_K$ holds automatically.  Further, since $[\kappa_{\pp_{\hat F}}:\kappa_{\pp_F}]$ is prime to $q$ and the residue class of $a$ is not a $q$-th power in $\kappa_{\pp_F}$, it is not a $q$-th power in $\kappa_{\pp_{\hat F}}$ by \cite[Theorem~VI.9.1]{Lang_Algebra}.
\end{proof}

  The main results of this section are the following two theorems.
  The first, which has more restrictive assumptions, provides an existential definition of $\OO_{\pp_K, {\bf K}}$. 
  The second theorem is more general and produces a first-order definition.
  Indeed, uniformly absolutely bounded ramification in Theorem~\ref{thm:val_ring_ex_def} means that $e(\pp_{\hat K}/\pp_K) < B$ for all finite extension $\hat K$ of $K$ inside ${\bf K}$ and all primes $\pp_{\hat K}$ of $\hat K$ over $\pp_K$ for some fixed $B$, whereas uniformly $q$-bounded ramification in Theorem~\ref{thm:val_ring_first_order_def} allows for arbitrarily large ramification indices, as long as the order at $q$ is uniformly bounded; see Definition~\ref{def:uniform_bounded}.
  
\begin{theorem}
\label{thm:val_ring_ex_def}
    If $\pp_K$ has uniformly absolutely bounded ramification and uniformly $q$-bounded relative degree in ${\bf K}$ then $\OO_{\pp_K,{\bf K}}$ has an existential definition over ${\bf K}$.
\end{theorem}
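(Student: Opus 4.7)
The plan is to adapt the existential definition of a valuation ring from Corollary~\ref{cor:val_ring_global} to the infinite extension ${\bf K}$ by choosing parameters $a,b$ that behave uniformly at every finite stage. Using the hypothesis of uniformly absolutely bounded ramification and uniformly $q$-bounded relative degree, Lemma~\ref{le:ab} supplies a finite extension $F\subseteq {\bf K}$ of $K$ and elements $a,b\in F$ such that, for every finite extension $\hat F/F$ inside ${\bf K}$ and every prime $\pp_{\hat F}$ of $\hat F$ over $\pp_K$, one has $\ord_{\pp_{\hat F}}b = -1$, the element $a$ is a unit at $\pp_{\hat F}$ which is not a $q$-th power modulo $\pp_{\hat F}$, and $a \equiv 1$ modulo every prime in the divisor of $b$ not lying over $\pp_K$. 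For each $x\in {\bf K}$, define
$$
L_1 = {\bf K}(\sqrt[q]{1+(bx^q+b^q)^{-1}}), \qquad L_2 = L_1(\sqrt[q]{1+(a+a^{-1})b^{-1}}),
$$
and consider the condition
$$
\exists y \in L_2(\sqrt[q]{a}): \ {\bf N}_{L_2(\sqrt[q]{a})/L_2}(y) = bx^q + b^q.
$$
By Lemma~\ref{le:norm_as_diophantine}, this is an existential formula in $x$ over ${\bf K}$, and the plan is to show that it cuts out precisely $\OO_{\pp_K,{\bf K}}$.

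For the forward direction, suppose $x\in \OO_{\pp_K,{\bf K}}$ and pick any finite extension $F'\subseteq {\bf K}$ of $F$ with $x\in F'$; let $L_1', L_2'$ be defined analogously over $F'$. At every prime $\pp_{F'}$ of $F'$ lying over $\pp_K$, the integrality hypothesis gives $\ord_{\pp_{F'}}x\geq 0 > -\tfrac{q-1}{q} = \tfrac{q-1}{q}\ord_{\pp_{F'}}b$, and Lemma~\ref{le:ab} supplies the remaining local conditions on $a$ and $b$ at $\pp_{F'}$. Running the ``solvable'' direction of the proof of Proposition~\ref{prop:L2/L1} verbatim over $F'$, the Hasse-norm-principle check of Proposition~\ref{prop:hasse_application} succeeds at every prime of $F'$, yielding a solution $y\in L_2'(\sqrt[q]{a})$. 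Since $L_2'\subseteq L_2$, Lemma~\ref{le:sol_down_to_sol_up} promotes this to a solution over $L_2(\sqrt[q]{a})$.

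For the reverse direction, suppose $x\notin \OO_{\pp_K,{\bf K}}$, so $\ord_{\pp_{\bf K}}x < 0$ for some prime $\pp_{\bf K}$ of ${\bf K}$ over $\pp_K$, and suppose for contradiction that $y\in L_2(\sqrt[q]{a})$ solves the norm equation. Apply Proposition~\ref{prop:finandinfin} with generators $\beta_1 = \sqrt[q]{1+(bx^q+b^q)^{-1}}$, $\beta_2 = \sqrt[q]{1+(a+a^{-1})b^{-1}}$, $\alpha = \sqrt[q]{a}$, and the element $y$, to obtain a finite extension $\hat E\subseteq {\bf K}$ of $F$ containing $x$ such that, setting $L = \hat E(\beta_1,\beta_2)$, the norm equation ${\bf N}_{L(\sqrt[q]{a})/L}(y)=bx^q+b^q$ holds with the norm computed over $L$. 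Put $\pp_{\hat E} = \pp_{\bf K}\cap \hat E$; then $\ord_{\pp_{\hat E}}x\leq -1 < -\tfrac{q-1}{q} = \tfrac{q-1}{q}\ord_{\pp_{\hat E}}b$, and Lemma~\ref{le:ab} ensures the remaining hypotheses of Proposition~\ref{prop:L2/L1} hold at $\pp_{\hat E}$. The ``no-solution'' direction of Proposition~\ref{prop:L2/L1} applied to $\hat E$ with distinguished prime $\pp_{\hat E}$ then rules out such a $y$, giving the required contradiction.

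The main technical point is the forward direction: one must verify that the global argument of Proposition~\ref{prop:L2/L1} genuinely transposes to $F'$ even though $F'$ may have several primes over $\pp_K$, each a ``distinguished pole'' of $b$. This goes through because Lemma~\ref{le:ab} installs exactly the same favorable configuration of $(a,b)$ simultaneously at every such prime, and because at each such $\pp_{F'}$ the hypothesis $\ord_{\pp_{F'}}x \geq 0$ forces $\ord_{\pp_{F'}}(bx^q+b^q) = -q \equiv 0 \bmod q$, so the Hasse norm principle application is unobstructed.
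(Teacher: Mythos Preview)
Your proof is correct and follows essentially the same strategy as the paper: take $a,b,F$ from Lemma~\ref{le:ab}, use the same norm equation over ${\bf L}_2(\sqrt[q]{a})$, descend via Proposition~\ref{prop:finandinfin} to a finite level where Proposition~\ref{prop:L2/L1} applies, and lift solutions back up with Lemma~\ref{le:sol_down_to_sol_up}. Your final paragraph is a welcome clarification: the paper simply asserts that ``$a,b$ satisfy conditions of Proposition~\ref{prop:L2/L1} in $\hat F$ with respect to any factor of $\pp_K$,'' but as you notice, condition~(3) of that proposition fails literally when $\pp_K$ has several factors in $\hat F$ (since $a\not\equiv 1$ at the other factors); your observation that $\ord_{\pp_{F'}}(bx^q+b^q)=-q\equiv 0\bmod q$ at every such factor is exactly what makes the Hasse-norm-principle check go through anyway.
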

\begin{proof} 
    Take $a, b$ and $F$ from Lemma \ref{le:ab}.
Given $x \in {\bf K}$, consider the norm equation 
\begin{equation}
\label{eq:normabove}
  {\bf N}_{{\bf L}_2(\sqrt[q]{a})/{\bf L}_2}(y)=(bx^q+b^q).  
\end{equation} 
where
 ${\bf L}_1={\bf K}(\sqrt[q]{1+(bx^q+b^q)^{-1}})$ and
${\bf L}_2={\bf L}_1(\sqrt[q]{1+(a+a^{-1})b^{-1}})$.
We claim that \eqref{eq:normabove} has a solution $y$ in ${\bf L}_2(\sqrt[q]{a})$ if an only if $x\in \OO_{\pp_K, {\bf K}}$, or equivalently that $x$ is integral at all factors of $\pp_K$ in ${\bf K}$.  

First suppose that \eqref{eq:normabove} has a solution $y$ in ${\bf L}_2(\sqrt[q]{a})$.  
By applying Proposition \ref{prop:finandinfin} with $\alpha = \sqrt[q]{a}$, $z=y$, $\beta_1 = \sqrt[q]{1+(bx^q+b^q)^{-1}}$, $\beta_2 =\sqrt[q]{1+(a+a^{-1})b^{-1}}$, $\bf L = {\bf L}_2$ and $E=F(x)$ there exists a finite extension $\hat F$ of $F(x)$ such that for $L_2=\hat F(\beta_1,\beta_2) $ the norm equation 
\begin{equation}
\label{eq:normbelow}
     {\bf N}_{{L_2}(\sqrt[q]{a})/{L_2}}(y)=(bx^q+b^q)   
\end{equation}
has a solution $y \in L_2$.  
    By Lemma \ref{le:ab} and our assumption on $\pp_K$ we have that $a, b \in K$ satisfy conditions of Proposition \ref{prop:L2/L1} in $\hat F$ with respect to any factor of $\pp_K$.  
    Therefore, existence of $y \in L_2(\sqrt[q]{a})$ satisfying \eqref{eq:normbelow} implies that $x$ is integral at all factors of $\pp_K$ in $\hat F$ and therefore in $\bf K$.

Suppose now that $x \in {\bf K}$ is integral at all factors of $\pp_K$ in ${\bf K}$.  Let $\hat F=F(x)$.  Let $\beta_1, \beta_2$ be as above and let $L_2=\hat F(\beta_1,\beta_2)$.  
There exists $y \in L_2(\sqrt[q]{a})$ such that \eqref{eq:normbelow} is satisfied by Proposition~\ref{prop:L2/L1}, using the assumptions on $\pp_K, a$ and $b$, so \eqref{eq:normabove} is also satisfiable by Lemma~\ref{le:sol_down_to_sol_up}.
\end{proof}

A key ingredient in the previous proof is the fact that, due to uniform absolute boundedness of ramification, we have $\ord_{\pp_{\hat F}}b = -1$ for every finite extension $\hat F$ of $K$ inside ${\bf K}$ and every prime $\pp_{\hat F}$ of $\hat F$ lying over $\pp_K$. 
This means that $\ord_{\pp_{\hat F}}x < 0$ is equivalent to $\ord_{\pp_{\hat F}}x < \frac{q-1}{q}\ord_{\pp_{\hat F}}b$, which makes a definition of the valuation ring straightforward, given Proposition~\ref{prop:L2/L1}.

At the same time, if we only assume uniformly $q$-bounded ramification, then we only know that $\ord_{\pp_{\hat F}}b$ is coprime to $q$, and it is possible that some elements $x$ satisfy $\frac{q-1}{q}\ord_{\pp_{\hat F}}b < \ord_{\pp_{\hat F}}x < 0$, which is a case not covered by Proposition~\ref{prop:L2/L1}.
To fix this problem, we observe that if $x$ has a very small pole, then we can increase the pole by taking powers and eventually reach  $\ord_{\pp_{\hat F}}x^r < \frac{q-1}{q}\ord_{\pp_{\hat F}}b$ for some~$r$.
This observation leads to the following theorem, where the definition of the valuation ring exploits a norm equation along with a multiplicative condition.

\begin{theorem}
\label{thm:val_ring_first_order_def}
If ${\bf K}$ is uniformly $q$-bounded at $\pp_K$, then $\OO_{\pp_K,{\bf K}}$ has a first-order definition over ${\bf K}$.
\end{theorem}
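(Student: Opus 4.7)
The plan is to extend the norm-equation approach of Theorem~\ref{thm:val_ring_ex_def} and to recover the valuation ring as the \emph{multiplicatively stable} subset of a larger diophantine set. Under uniformly $q$-bounded ramification, Lemma~\ref{le:ab} still provides a finite extension $F/K$ inside ${\bf K}$ and elements $a,b \in F$ such that $\ord_{\pp_{\hat F}}b < 0$ and $\ord_{\pp_{\hat F}}b \not\equiv 0 \bmod q$ for every factor $\pp_{\hat F}$ of $\pp_K$ in every finite $\hat F \subseteq {\bf K}$. However, we can no longer force $\ord_{\pp_{\hat F}}b = -1$ uniformly, so the norm equation of Theorem~\ref{thm:val_ring_ex_def} now defines a diophantine set (by Lemma~\ref{le:norm_as_diophantine})
\[
    R_{a,b} = \left\{ w \in {\bf K} : \ord_{\pp_{\hat F}}w > \tfrac{q-1}{q}\ord_{\pp_{\hat F}}b \text{ at every factor } \pp_{\hat F} \text{ of } \pp_K \text{ in every finite } \hat F \subseteq {\bf K} \right\},
\]
which strictly contains $\OO_{\pp_K,{\bf K}}$ in general, since it admits elements with small poles relative to $b$. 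The fact that the norm equation cuts out precisely this set follows from Proposition~\ref{prop:L2/L1} together with the descent-and-ascent technique of Proposition~\ref{prop:finandinfin} and Lemma~\ref{le:sol_down_to_sol_up}, exactly as in the proof of Theorem~\ref{thm:val_ring_ex_def}.

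The crucial step is to establish the characterization
\[
    x \in \OO_{\pp_K,{\bf K}} \quad\Longleftrightarrow\quad xz \in R_{a,b} \text{ for every } z \in R_{a,b}.
\]
The forward direction is immediate: if $\ord_{\pp_{\hat F}}x \geq 0$ and $\ord_{\pp_{\hat F}}z > \tfrac{q-1}{q}\ord_{\pp_{\hat F}}b$, then $\ord_{\pp_{\hat F}}(xz) \geq \ord_{\pp_{\hat F}}z > \tfrac{q-1}{q}\ord_{\pp_{\hat F}}b$. For the converse, I would substitute $z = 1 \in R_{a,b}$ to obtain $x \in R_{a,b}$, and then iterate to conclude $x^n \in R_{a,b}$ for every positive integer $n$. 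This is exactly the ``powers of $x$'' observation highlighted in the discussion preceding the theorem: if some factor $\pp_{\hat F}$ of $\pp_K$ in a finite $\hat F$ containing $x,a,b$ satisfies $\ord_{\pp_{\hat F}}x < 0$, then $\ord_{\pp_{\hat F}}x^n = n\,\ord_{\pp_{\hat F}}x$ tends to $-\infty$, while the lower bound $\tfrac{q-1}{q}\ord_{\pp_{\hat F}}b$ remains a fixed finite rational. This eventually contradicts $x^n \in R_{a,b}$, ruling out the existence of any negative-order factor.

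Finally, writing $\varphi(w)$ for the existential formula defining $R_{a,b}$, the characterization above yields the first-order definition
\[
    x \in \OO_{\pp_K,{\bf K}} \quad\Longleftrightarrow\quad \forall z \in {\bf K} \; \big(\varphi(z) \Longrightarrow \varphi(xz)\big),
\]
which is first-order over ${\bf K}$ by the standard quantifier manipulation used in Lemma~\ref{le:fo}: rewriting the implication as $\forall z(\lnot\varphi(z) \vee \varphi(xz))$ and merging the outer quantifiers with the negated and unnegated existentials produces a $\forall\forall\exists$ formula in polynomial atomics. The main obstacle I anticipate is the careful verification that the norm-equation description of $R_{a,b}$ passes cleanly from a finite subfield up to ${\bf K}$, so that the ``powers of $x$'' argument localizes to a computation at a single prime; Proposition~\ref{prop:finandinfin} is designed precisely for this finite-versus-infinite reduction, and the remaining details should follow the template already established in the proof of Theorem~\ref{thm:val_ring_ex_def}.
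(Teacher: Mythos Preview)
Your proposal is correct and follows essentially the same approach as the paper: the paper defines $N$ to be your diophantine set $R_{a,b}$ and sets $R=\{x\in N:\forall u\in N,\ xu\in N\}$, then proves $R=\OO_{\pp_K,{\bf K}}$ via exactly the ``powers of $x$'' argument you describe (using $x^r\in N$ for all $r$ to force integrality) together with the descent/ascent through Proposition~\ref{prop:finandinfin} and Lemma~\ref{le:sol_down_to_sol_up}. The only cosmetic difference is that the paper includes $x\in N$ explicitly in the definition of $R$ rather than deriving it from $z=1$, and it carries out the finite-level verifications directly rather than first isolating the explicit order-theoretic description of $R_{a,b}$.
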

\begin{proof}
Take $a,b$ and $F$ from Lemma \ref{le:ab}.
Consider the following first-order definable subsets of ${\bf K}$:
\[
    N=\{u \in {\bf K} \mid \exists y \in {\bf L}_2(\sqrt[q]{a}) 
    \text{ such that }
    {\bf N}_{{\bf L}_2(\sqrt[q]{a})/{\bf L}_2}(y)=(bu^q+b^q)\}
\]
\[
    R=\{x \in N \mid \forall u \in N: xu \in N\}
\]
where
 ${\bf L}_1={\bf K}(\sqrt[q]{1+(bx^q+b^q)^{-1}})$ and
${\bf L}_2={\bf L}_1(\sqrt[q]{1+(a+a^{-1})b^{-1}})$, as usual.
We claim that $R = \OO_{\pp_K, {\bf K}}$.

First, let $x\in R$, let $\pp_{F(x)}$ be a prime of $F(x)$ over $\pp_K$, and   suppose $\ord_{\pp_{F(x)}}x<0$.  
Then for some $r$ we have that 
\[
    \ord_{\pp_{F(x)}}x^r = r\ord_{\pp_{F(x)}}x<\frac{q-1}{q} \ord_{\pp_{F(x)}}b.
\]
Observe that we have  $x^r \in N$ by induction.  
Indeed, given $x^m\in N$ for $m\geq 1$, we deduce $x^{m+1} = x\cdot x^m \in N$ by the definition of $R$.  
Therefore, by the definition of $N$, there is an element 
$z \in {\bf K}(\sqrt[q]{\alpha})$  
such that 
${\bf N}_{{\bf L}_2(\sqrt[q]{a})/{\bf L}_2}(z)=b(x^r)^q+b^q$.

We now choose $\hat F$ by Proposition \ref{prop:finandinfin}  with $\alpha = \sqrt[q]{a}$, $\beta_1 = \sqrt[q]{1+(b(x^r)^q+b^q)^{-1}}$, and $\beta_2 =\sqrt[q]{(1+(a+a^{-1})b^{-1})}$ so that $F(x) \subset \hat F \subset {\bf K}$, and
  $$
    {\mathbf N}_{{\bf L}(\alpha)/{\bf L}}(z)
    ={\mathbf N}_{{ L(\alpha)}/{ L}}(z) = b(x^r)^q +b^q
    $$
    where $L=\hat F(\beta_1,\beta_2)$, ${\bf L}={\bf K}(\beta_1, \beta_2)$. 
By Proposition \ref{prop:L2/L1} we now have that 
\begin{equation}
    \label{eq:ineqorder}
    \ord_{\pp_{\hat F}}x^r > \frac{q-1}{q}\ord_{\pp_{\hat F}}b. 
\end{equation}
which is a contradiction. Therefore, $x\in \OO_{\pp_K, {\bf K}}$, as desired.

Conversely, suppose $\ord_{\pp_{F(x)}}x \geq 0$ for some prime $\pp_{F(x)}$ of $F(x)$ over $\pp_K$. We wish to show that $x\in R$. We can immediately see that $x\in N$ by combining Proposition~\ref{prop:L2/L1} and Lemma~\ref{le:sol_down_to_sol_up}.
Now select $u \in N$. 
By definition, there is some $z \in {\bf L}_2(\sqrt[q]{a})$ satisfying 
\[
{\mathbf N}_{{{\bf L}_2(\sqrt[q]{a})}/{ \bf L}_2}(z) = bu^q + b^q. 
\]
Applying Proposition \ref{prop:finandinfin} with $E=F(x,u)$ we conclude that there is a finite extension $\hat F$ of $F(x,u)$ with $z \in L(\sqrt[q]{a})$ and
\[
{\mathbf N}_{{ L(\alpha)}/{ L}}(z)=bu^q+b^q
\]
for  $L=\hat F(\beta_1,\beta_2)$ with $\beta_1=\sqrt[q]{1+(bu^q+b^q)^{-1}}$ and $\beta_2$, $\alpha$ as defined above.
Therefore, by Proposition \ref{prop:L2/L1} we have that 
\[
\ord_{\pp_{\hat F}}u >\frac{q-1}{q}\ord_{\pp_{\hat F}}b
\]
for any factor $\pp_{\hat F}$ of $\pp_K$.  Since $\ord_{\pp_{\hat F}}x\geq 0$, we also have that 
\[
\ord_{\pp_{\hat F}}ux >\frac{q-1}{q}\ord_{\pp_{\hat F}}b.
\]
Now let $\hat\beta_1 = \sqrt[q]{1+(b(ux)^q+b^q)^{-1}}$ and let $\beta_2$, $\alpha$ be as above. Define $\hat L = \hat F(\hat \beta_1, \beta_2)$.  Then by Proposition \ref{prop:L2/L1} there exists $\hat z \in \hat L(\sqrt[q]{a})$ such that 
\[
{\mathbf N}_{{ \hat L(\alpha)}/{ \hat L}}(\hat z)=b(xu)^q+b^q.
\]
Hence $ux \in N$ by Lemma~\ref{le:sol_down_to_sol_up}, which proves that $x\in R$ and completes the proof.
\end{proof}

In the main results of this section, we exploit norm equations where the auxiliary elements $a$ and $b$ are fixed.
In contrast, the main results of the Section~\ref{sec:q-bounded-S-ints} define rings of $\calS$-integers in ${\bf K}$, under certain conditions, by exploiting norm equations where the auxiliary elements $a$ and $b$ vary over elements of $\calV_{\calS, {\bf K}}$ and $U_{\calS, {\bf K}}$, respectively.
Consequently, we need to establish that these unit groups to be definable.
This can be deduced from the definability of valuation rings, given the following observation.

\begin{remark}
\label{rem:unit_gps_definable}
    If $\pp_K$ is a prime of $K$, then the following are definable in $\OO_{{\bf K}, \pp_K}$.
    \begin{enumerate}
        \item The unit group at $\pp_K$:  
    $$
        U_{{\bf K}, \pp_K}
            =\{u \in \OO_{\pp_K, {\bf K}} : \exists v\in \OO_{\pp_K, {\bf K}} \ uv = 1\}.
        $$
        \item The subgroup of units congruent to $1$ modulo all primes over $\pp_K$:
       $$
        \calV_{{\bf K}, \pp_K}
            =\{u \in U_{\pp_K, {\bf K}} : (u -1) \not\in U_{\pp_K, {\bf K}} \}.
        $$
    \end{enumerate}
\end{remark}

\begin{corollary}
\label{cor:unit_gps_definable}
    The following are true.
    \begin{enumerate}[(a)]
    \item    If ${\bf K}$ is uniformly $q$-bounded at a prime $\pp_{K}$ of $K$, then $U_{{\bf K}, \pp_K}$ and $\calV_{{\bf K}, \pp_K}$ are definable in ${\bf K}$.\label{cor:unit_gps_definable:gen}
        \item 
    If ${\bf K}/K$ is $q$-bounded and Galois, then $U_{\pp_K, {\bf K}}$ and $\calV_{\pp_K, {\bf K}}$ are definable in ${\bf K}$ for every prime $\pp_{K}$ of $K$. 
    \label{cor:unit_gps_definable:Galois}
    \end{enumerate}
\end{corollary}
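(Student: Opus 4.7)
The plan is to reduce both parts to the definability of the integral closure $\OO_{\pp_K, {\bf K}}$ combined with the observation in Remark~\ref{rem:unit_gps_definable}. The main engine has already been built in Theorem~\ref{thm:val_ring_first_order_def}, so the remaining work is essentially bookkeeping and one appeal to the Galois-case uniformity result.

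For part \eqref{cor:unit_gps_definable:gen}, fix $\pp_K \in \calS_K$. By hypothesis ${\bf K}$ is uniformly $q$-bounded at $\pp_K$, so Theorem~\ref{thm:val_ring_first_order_def} supplies a first-order formula $\varphi_{\pp_K}(x)$ over ${\bf K}$ whose solution set in ${\bf K}$ is exactly $\OO_{\pp_K, {\bf K}}$. By Remark~\ref{rem:unit_gps_definable}, $U_{{\bf K}, \pp_K}$ is defined inside $\OO_{\pp_K, {\bf K}}$ by the existential formula $\exists v\,(uv = 1)$ where $v$ ranges over $\OO_{\pp_K, {\bf K}}$. Relativizing the quantifier to the definable set $\OO_{\pp_K, {\bf K}}$ gives the ${\bf K}$-formula
\[
    \varphi_{\pp_K}(u) \ \wedge\ \exists v\,\bigl(\varphi_{\pp_K}(v) \wedge uv = 1\bigr),
\]
which defines $U_{{\bf K}, \pp_K}$ in ${\bf K}$. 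An identical relativization of the formula for $\calV_{{\bf K}, \pp_K}$ from Remark~\ref{rem:unit_gps_definable}, namely $u \in U_{{\bf K}, \pp_K} \wedge (u-1) \notin U_{{\bf K}, \pp_K}$, yields the required first-order definition of $\calV_{{\bf K}, \pp_K}$ in ${\bf K}$.

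For part \eqref{cor:unit_gps_definable:Galois}, observe that if ${\bf K}/K$ is $q$-bounded and Galois then Proposition~\ref{prop:Galois_q-bounded} upgrades $q$-boundedness at $\pp_K$ to uniform $q$-boundedness at $\pp_K$. Thus the hypothesis of part \eqref{cor:unit_gps_definable:gen} is satisfied for the singleton $\calS_K = \{\pp_K\}$, and the conclusion follows immediately.

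The only conceptual point that needs care is the relativization step: we must check that quantifying over the definable set $\OO_{\pp_K, {\bf K}}$ inside ${\bf K}$ preserves first-order definability, which it does by standard replacement of $\exists v \in \OO_{\pp_K, {\bf K}}$ with $\exists v \in {\bf K}\,(\varphi_{\pp_K}(v) \wedge \ldots)$ and similarly for negations/universal quantifiers. There is no obstacle here beyond writing out the formulas, so the corollary follows directly from the cited results.
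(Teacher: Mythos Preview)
Your proposal is correct and follows essentially the same approach as the paper: the paper's proof simply cites Theorem~\ref{thm:val_ring_first_order_def} together with Remark~\ref{rem:unit_gps_definable} for part~\eqref{cor:unit_gps_definable:gen}, and then invokes Proposition~\ref{prop:Galois_q-bounded} to reduce part~\eqref{cor:unit_gps_definable:Galois} to part~\eqref{cor:unit_gps_definable:gen}. You have merely spelled out the relativization step that the paper leaves implicit.
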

\begin{proof}
    Part \eqref{cor:unit_gps_definable:gen} follows immediately from Theorem~\ref{thm:val_ring_first_order_def}, given Remark~\ref{rem:unit_gps_definable}.
    Then, part \eqref{cor:unit_gps_definable:gen}  implies part \eqref{cor:unit_gps_definable:Galois} by Proposition~\ref{prop:Galois_q-bounded}.
\end{proof}

\section{Defining rings of \texorpdfstring{$\calS$}{S}-integers in \texorpdfstring{$q$}{q}-bounded infinite extensions}
\label{sec:q-bounded-S-ints}

Throughout this section, $K$ is a global field of nonzero characteristic  $p\neq q$. 
For the sake of our proofs, we assume that $K$ contains a $q$-th root of unity. We remind the reader that the definability results remain true if $K$ does not contain a $q$-th root of unity; see Corollary~\ref{cor:zeta_in_K}.

Before establishing the definability of $\calS$-integers in certain $q$-bounded extensions, we prove a lemma which provides the existence of the desired auxiliary elements $a$ and $b$ used within the norm equations. 
As in Section~\ref{sec:def_val_ring}, the ultimate goal is to exploit the work of Section~\ref{sec:global} on global function fields by leveraging the property of $q$-boundedness. We remind the reader that $U_{\calS, K}$ and $\calV_{\calS, K}$ are certain groups of units; see Notation and Assumptions~\ref{notation}.

\begin{lemma}
\label{le:choose_a_b_S_integers}
    Let ${\bf K}/K$ be a $q$-bounded extension and 
    let $\calS$ be a set of primes of $K$.
    If $x\in {\bf K}$
    and  $\pp_{\bf K}$ is a prime of ${\bf K}$ which does not lie over a prime of $\calS$, 
    then there exists a finite extension $K_1$ of $K(x)$ inside ${\bf K}$ and elements 
    $a\in \calV_{\calS, K_1}$ 
    and $b\in U_{\calS, K_1}$ 
    satisfying the following properties for every finite extension $K_2$ of $K_1$ inside ${\bf K}$, 
    where we define $\pp_{K_2} = \pp_{\bf K}\cap K_2$.
    \begin{enumerate}
        \item $a$ is a unit at $\pp_{K_2}$ 
        and is not a $q$-th power modulo $\pp_{K_2}$;
        \item $\ord_{\pp_{K_2}} b \not\equiv 0\bmod q$ 
        and $\ord_{\pp_{K_2}}b < 0$;
        \item Either $\ord_{\pp_{K_2}}x \geq 0$, 
        or $\ord_{\pp_{K_2}}x < \frac{q-1}{q}\ord_{\pp_{K_2}}b$.
    \end{enumerate}
\end{lemma}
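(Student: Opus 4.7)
The plan is to combine the path-condition formulation of $q$-boundedness from Proposition~\ref{prop:equiv_q_bounded_defs} with the Weak Approximation Theorem, in the same spirit as Lemma~\ref{le:ab}. First, applying Proposition~\ref{prop:equiv_q_bounded_defs} to the valuation of ${\bf K}$ determined by $\pp_{\bf K}$, I obtain a finite extension $K_0/\F_p(t)$ inside ${\bf K}$ such that for every finite extension $L/K_0$ with $L\subseteq{\bf K}$, both $\ord_q e(\pp_L/\pp_{K_0})$ and $\ord_q f(\pp_L/\pp_{K_0})$ vanish, where $\pp_L := \pp_{\bf K}\cap L$ and $\pp_{K_0} := \pp_{\bf K}\cap K_0$. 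Enlarging $K_0$ if necessary, I may assume $K_0 \supseteq K(x)$; the coprimality persists by multiplicativity of $e$ and $f$ in towers. Set $K_1 := K_0$.

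Next, I construct $a, b \in K_1$ by Weak Approximation \cite[Theorem~I.1.1]{Lang94}. Since by hypothesis $\pp_{\bf K}$ does not lie over any prime of $\calS$, the prime $\pp_{K_1}$ is distinct from the finitely many primes of $K_1$ lying over $\calS$. I therefore choose $b \in K_1$ with $\ord_{\pp_{K_1}} b = -1$ and $\ord_\qq b = 0$ for every prime $\qq$ of $K_1$ above $\calS$, so that $b \in U_{\calS, K_1}$. For $a$, I fix a lift $w \in K_1$ of an element of the residue field at $\pp_{K_1}$ which is not a $q$-th power (such an element exists because the residue field is a finite field of characteristic $p\neq q$), and apply Weak Approximation again to select $a\in K_1$ with $\ord_{\pp_{K_1}}(a-w) > 0$ and $\ord_\qq(a-1) > 0$ for each $\qq$ of $K_1$ above $\calS$. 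Then $a\in \calV_{\calS, K_1}$, $a$ is a unit at $\pp_{K_1}$, and its reduction modulo $\pp_{K_1}$ equals that of $w$, hence is not a $q$-th power.

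Finally, I verify conditions (1)-(3) for an arbitrary finite extension $K_2/K_1$ inside ${\bf K}$. Since $\ord_q f(\pp_{K_2}/\pp_{K_1}) = 0$, a unit which is not a $q$-th power in the residue field at $\pp_{K_1}$ remains one in the residue field at $\pp_{K_2}$ by \cite[Theorem~VI.9.1]{Lang_Algebra}, giving (1). For (2), $\ord_{\pp_{K_2}} b = -e(\pp_{K_2}/\pp_{K_1})$, which is strictly negative and coprime to $q$ by the choice of $K_1$. For (3), either $\ord_{\pp_{K_1}} x \geq 0$, in which case $\ord_{\pp_{K_2}} x \geq 0$ and the first alternative holds, or $\ord_{\pp_{K_1}} x \leq -1$, whence
\[
\ord_{\pp_{K_2}} x = e(\pp_{K_2}/\pp_{K_1})\ord_{\pp_{K_1}} x \leq -e(\pp_{K_2}/\pp_{K_1}) < -\tfrac{q-1}{q}e(\pp_{K_2}/\pp_{K_1}) = \tfrac{q-1}{q}\ord_{\pp_{K_2}} b,
\]
yielding the second alternative.

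The main conceptual observation is that condition (3) becomes automatic once $\ord_{\pp_{K_1}} b = -1$, because any negative integer is at most $-1 < -(q-1)/q$; no genuine obstacle remains, only the verification that the path condition of $q$-boundedness simultaneously handles the non-$q$-th-power property of $a$ and the $q$-coprimality of $\ord_{\pp_{K_2}} b$ as one moves up into ${\bf K}$.
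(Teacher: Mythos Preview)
Your proof is correct and follows essentially the same approach as the paper: invoke the path condition from Proposition~\ref{prop:equiv_q_bounded_defs} to pick $K_1$, then use Weak Approximation to build $a$ and $b$, and finally check that the conditions persist in every $K_2$ via the coprimality of $e$ and $f$ to $q$. One small imprecision: the claim that a non-$q$-th power exists in the residue field ``because the residue field is a finite field of characteristic $p\neq q$'' is not true in general (e.g.\ every element of $\F_2$ is a cube); what makes it work here is the standing assumption that $K$ contains $\zeta_q$, so the residue field does too and hence $q$ divides the order of its multiplicative group. The paper handles this step by citing Corollary~\ref{cor:extq}.
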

\begin{proof}
    By Proposition~\ref{prop:equiv_q_bounded_defs}.\eqref{prop:equiv_q_bounded_defs:path}, we may take $K_1$ to be a finite extension of $K(x)$ inside ${\bf K}$ satisfying the property that $\ord_q e(\pp_{K_2}/\pp_{K_1}) = 0$ and $\ord_q f(\pp_{K_2}/\pp_{K_1}) = 0$ for every finite extension ${K_2}$ of $K_1$ inside ${\bf K}$, where $\pp_{K_j} =\pp_{\bf K}\cap K_j$ for $j\in\{1,2\}$.

    By Corollary~\ref{cor:extq}, there exists a residue class $[a_0] \in \kappa_{\pp_{K_1}}$ such that $[a_0]$ is not  a $q$-th power in $\kappa_{\pp_{K_1}}$.
    By the Weak Approximation Theorem (see \cite[Theorem II.1.1]{Lang94}), there exists  $a \in \calV_{K_1,\calS}$ and $b\in U_{K_1, \calS}$ so that $[a]=[a_0]$
    and $\ord_{\pp_{K_1}}b=-1$.

    Let ${K_2}$ be a finite extension of $K_1$ inside ${\bf K}$ and define $\pp_{K_2} = \pp_{\bf K}\cap K_2$. 
    We verify the claims in order. 
    First, $[a] = [a_0]$ is a prescribed residue modulo $\pp_{K_1}$ which ensures that $a$ is a unit and not a $q$-th power in $\kappa_{\pp_{K_1}}$. If $a$ becomes a $q$-th power in $\kappa_{\pp_{K_2}}$, then $f(\pp_{K_2}/\pp_{K_1})$ is divisible by $q$ by \cite[Theorem~VI.9.1]{Lang_Algebra}, which cannot happen by the definition of $K_1$.
    Next, $\pp_{K_2}$ is a pole of $b$ because it lies over $\pp_{K_1}$, and we have
    $$
        \ord_{\pp_{K_2}}b = e(\pp_{K_2}/\pp_{K_1})\ord_{\pp_{K_1}}b\not\equiv0\bmod q
    $$ 
    by the definition of $K_1$.
    For the final claim, the case $K_1 = K_2$
    is obvious because
    $1 < \frac{q-1}{q}\ord_{\pp_{K_1}}b < 0$, so the claim in general follows because $x\in K_1$. 
\end{proof}

The following theorem is our first step towards a first-order definition of rings of $\calS$-integral functions.
We warn the reader that the formula \eqref{eq:sentence} is only a first-order formula if the unit groups $U_{{\bf K}, \calS}$ and $\calV_{{\bf K}, \calS}$ have first-order definitions themselves.
This issue was already anticipated at the end of Section~\ref{sec:def_val_ring}, and we address the problem in the corollaries that follow the theorem.

\begin{theorem}
\label{thm:infinite}
    Let ${\bf K}$ be a possibly infinite $q$-bounded algebraic extension of $K$, 
    and let $\calS$ be a nonempty finite set of valuations of some finite extension of $K$.
    For any $a,b\in {\bf K}^\times$ and $x\in {\bf K}$, define 
    ${\bf L}_1={\bf K}(\sqrt[q]{1+(bx^q+b^q)^{-1}})$, 
    ${\bf L}_3={\bf L}_1(\sqrt[q]{1+x^{-1}})$, 
    and ${\bf L}_4={\bf L}_3(\sqrt[q]{(1+(a+a^{-1})x^{-1})}$.
    Then the sentence 
    \begin{equation}
    \label{eq:sentence}
    \forall a \in \calV_{{\bf K},\calS}  \
    \forall b\in U_{{\bf K},\calS}  \
    \exists y \in {\bf L}_4(\sqrt[q]{a}) 
    : {\mathbf N}_{{\bf L}_4(\sqrt[q]{a})/{ \bf L}_4}(y)=bx^q+b^q
    \end{equation}
    is true if and only if $x$ is an $\calS$-integer.
\end{theorem}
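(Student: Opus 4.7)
The plan is to reduce both directions to the analogous statement for finite extensions (Proposition~\ref{prop:finite}). For the ``only if'' direction, we use Lemma~\ref{le:sol_down_to_sol_up} to transport a solution from a suitable finite subfield up to ${\bf K}$; for the ``if'' direction, we argue by contrapositive and use Proposition~\ref{prop:finandinfin} together with $q$-boundedness (via Lemma~\ref{le:choose_a_b_S_integers}) to descend a hypothetical solution down to a finite subfield where it can be contradicted by Lemma~\ref{le:nosolution}.

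First, suppose $x$ is an $\calS$-integer and fix arbitrary $a \in \calV_{{\bf K},\calS}$ and $b \in U_{{\bf K},\calS}$. Choose a finite extension $F$ of $K$ inside ${\bf K}$ large enough to contain $x$, $a$, $b$, and the field over which $\calS$ is defined, and let $\calS_F$ denote the set of extensions of valuations in $\calS$ to $F$. The definitions of $\calV_{{\bf K},\calS}$ and $U_{{\bf K},\calS}$ imply $a \in \calV_{\calS_F, F}$ and $b \in U_{\calS_F, F}$; moreover, every pole of $x$ in $F$ lies in $\calS_F$, since a pole of $x$ at $\pp_F$ would extend to a pole at any prime of ${\bf K}$ above it, forcing $\pp_F \in \calS_F$. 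Applying Lemma~\ref{le:existsolutions} over $F$ yields a solution to the norm equation over $L_4(\sqrt[q]{a})$, where $L_4$ is the field built from $F$, $a$, $b$, $x$ in the same way ${\bf L}_4$ is built from ${\bf K}$, $a$, $b$, $x$. Since $L_4 \subseteq {\bf L}_4$ and $a \in L_4$, a single application of Lemma~\ref{le:sol_down_to_sol_up} with $E = L_4$ and ${\bf L} = {\bf L}_4$ transports this solution to one in ${\bf L}_4(\sqrt[q]{a})$.

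For the converse, argue contrapositively: assume $x$ is not an $\calS$-integer, so there exists a prime $\pp_{\bf K}$ of ${\bf K}$ with $\pp_{\bf K}$ not lying over $\calS$ and $\ord_{\pp_{\bf K}} x < 0$. Invoke Lemma~\ref{le:choose_a_b_S_integers} at $\pp_{\bf K}$ to produce a finite extension $K_1/K(x)$ inside ${\bf K}$ and elements $a \in \calV_{\calS, K_1} \subseteq \calV_{{\bf K},\calS}$ and $b \in U_{\calS, K_1} \subseteq U_{{\bf K},\calS}$ satisfying properties (1)--(3) of that lemma, so that in particular $\ord_{\pp_{K_2}}x < \tfrac{q-1}{q}\ord_{\pp_{K_2}}b$ for every finite extension $K_2$ of $K_1$ in ${\bf K}$, where $\pp_{K_2} := \pp_{\bf K} \cap K_2$. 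Suppose for contradiction that some $y \in {\bf L}_4(\sqrt[q]{a})$ satisfies ${\bf N}_{{\bf L}_4(\sqrt[q]{a})/{\bf L}_4}(y) = bx^q + b^q$. Apply Proposition~\ref{prop:finandinfin} with $E = K_1$, $\alpha = \sqrt[q]{a}$, and $(\beta_1, \beta_2, \beta_3) = \bigl(\sqrt[q]{1+(bx^q+b^q)^{-1}},\, \sqrt[q]{1+x^{-1}},\, \sqrt[q]{1+(a+a^{-1})x^{-1}}\bigr)$ to obtain a finite extension $\hat K$ of $K_1$ in ${\bf K}$ such that the analogous norm equation over $L_4 := \hat K(\beta_1, \beta_2, \beta_3)$ is solvable. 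However, properties (1)--(3) of Lemma~\ref{le:choose_a_b_S_integers} applied at $\pp_{\hat K}$ are precisely the hypotheses of Lemma~\ref{le:nosolution}, which then asserts that this descended norm equation has \emph{no} solution in $L_4(\sqrt[q]{a})$. Contradiction.

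The main obstacle is the contrapositive direction, where one must ensure that the auxiliary data $(a,b)$ chosen over $K_1$ continues to block solvability at \emph{every} finite extension $\hat K$ that could appear in the conclusion of Proposition~\ref{prop:finandinfin}; since the size of $\hat K$ cannot be controlled in advance, uniform behavior along the factor tree is essential. This is exactly where $q$-boundedness is used: it is the hypothesis that powers Lemma~\ref{le:choose_a_b_S_integers} and guarantees that the order, unit, and non-$q$-th-power conditions on $a$ and $b$ propagate stably up the tower.
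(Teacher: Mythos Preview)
Your proof is correct and follows essentially the same route as the paper's: both directions reduce to the global-field lemmas (Lemma~\ref{le:existsolutions} and Lemma~\ref{le:nosolution}) via Lemma~\ref{le:sol_down_to_sol_up} going up and Proposition~\ref{prop:finandinfin} going down, with $q$-boundedness invoked through Lemma~\ref{le:choose_a_b_S_integers} exactly as you describe. One cosmetic slip: in your opening paragraph the labels ``if'' and ``only if'' are swapped relative to the statement (transporting a solution upward is the \emph{if} direction, and the contrapositive argument is the \emph{only if} direction), but the body of the proof is labeled and argued correctly.
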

\begin{proof}
    Let $x \in {\bf K}$ and assume that \eqref{eq:sentence} is true.
    To prove that $x$ is a $\calS$-integer, we fix a prime $\pp_{\bf K}$ of ${\bf K}$ which does not lie over $\calS$ and show that $\pp_{\bf K}$ cannot be a pole of $x$.
    Choose $a,b,K_1$ by Lemma~\ref{le:choose_a_b_S_integers} applied to ${\bf K}/K$, $\calS$, $x$ and $\pp_{\bf K}$.

    Since we assumed that \eqref{eq:sentence} is true for $x$, there exists a $y\in{\bf L}_4(\sqrt[q]{a})$ which solves the following equation: 
\begin{equation}
\label{eq:above}
  {\mathbf N}_{{\bf L}_4(\sqrt[q]{a})/{ \bf L}_4}(y)=bx^q+b^q.  
\end{equation}  
We now apply Proposition \ref{prop:finandinfin} to the extensions ${\bf K}/K_1$, ${\bf L}_4$ and the elements $y$ and $\alpha =\sqrt[q]{a}$ to obtain fields $\hat K$ and $L_4$ such that 
\begin{equation}
\label{eq:below}
    {\mathbf N}_{{L_4(\sqrt[q]{a})}/{ L}_4}(y)
    =bx^q + b^q.
\end{equation}
where the extension $L_4/\hat K$ is generated as a tower by
${L}_1={\hat K}(\sqrt[q]{1+(bx^q+b^q)^{-1}})$, 
${L}_3={L}_1(\sqrt[q]{1+x^{-1}})$, and
${L}_4={L}_3(\sqrt[q]{(1+(a+a^{-1})x^{-1})}$.
The conclusion of Lemma~\ref{le:choose_a_b_S_integers} with the choice $K_2 = \hat K$ states that $a$ and $b$ satisfy all of the necessary hypotheses of  Lemma~\ref{le:nosolution} by construction.
Therefore, $\pp_{\bf K}$ cannot lie over a pole of $x$. 

To complete the proof, assume $x \in {\bf K}$ is an $\calS$-integer,
and choose any $b \in U_{{{\bf K},\calS}}$ and $a\in \calV_{{\bf K},\calS}$.  
Consider  $K_1=K(x,a,b)$ and let $L_1, L_3$ and $L_4$ be defined as $\bf {L}_1, \bf {L}_3$ and $\bf {L}_4$ respectively with $\bf K$ replaced by $K_1$. 
By Lemma \ref{le:existsolutions}, equation \eqref{eq:below} has a solution $y \in L_4(\sqrt[q]{a})$, so \eqref{eq:above} has a solution in ${\bf L}_4(\sqrt[q]{a})$ by Lemma~\ref{le:sol_down_to_sol_up}
\end{proof}

\begin{corollary}
\label{cor:fo}
    In the notation above, if the sets $\calV_{{\bf K},\calS}$ and $U_{{\bf K},\calS}$ are diophantine subsets of ${\bf K}$, then the ring of $\calS$-integers has a first-order definition over ${\bf K}$.
\end{corollary}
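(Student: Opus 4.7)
The plan is to combine Theorem~\ref{thm:infinite} with the auxiliary Lemmas~\ref{le:norm_as_diophantine} and~\ref{le:fo} in a direct way. Theorem~\ref{thm:infinite} already produces a sentence of the form $\forall a \in \calV_{{\bf K},\calS}\,\forall b\in U_{{\bf K},\calS}\,\exists y\,\Phi(a,b,x,y)$ whose truth set is exactly the ring $\OO_{{\bf K},\calS}$; the only obstacle to this being a genuine first-order formula over ${\bf K}$ is that the bounded quantifiers range over the sets $\calV_{{\bf K},\calS}$ and $U_{{\bf K},\calS}$ and that the inner norm condition involves variables in a varying radical extension ${\bf L}_4(\sqrt[q]{a})$ rather than directly in ${\bf K}$.

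First I would apply Lemma~\ref{le:norm_as_diophantine} with $n = 4$ and $P(T) = bT^q + b^q$. The tower of $q$-th root extensions generating ${\bf L}_4(\sqrt[q]{a})$ over ${\bf K}$ is built from $a$ and the three elements $1+(bx^q+b^q)^{-1}$, $1+x^{-1}$, and $1+(a+a^{-1})x^{-1}$, each of which is a polynomial expression in $a,b,x$ (together with inverses, which are handled by introducing an extra equation imposing the inverse). Lemma~\ref{le:norm_as_diophantine} then gives us an existentially definable subset $A \subset {\bf K}^{3}$ consisting of those triples $(a,b,x)$ for which the norm equation ${\bf N}_{{\bf L}_4(\sqrt[q]{a})/{\bf L}_4}(y) = bx^q + b^q$ admits a solution $y$.

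Next I would invoke the hypothesis: $B := \calV_{{\bf K},\calS}$ and $C := U_{{\bf K},\calS}$ are by assumption diophantine subsets of ${\bf K}$. Applying Lemma~\ref{le:fo} to this triple $(A,B,C)$ yields that the set
\[
D = \{\,x \in {\bf K} \mid \forall a \in B\;\forall b \in C:\ (a,b,x) \in A\,\}
\]
is first-order definable in ${\bf K}$. But Theorem~\ref{thm:infinite} identifies $D$ as precisely $\OO_{{\bf K},\calS}$, which concludes the proof.

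I do not expect any serious obstacle: the corollary is essentially a packaging statement, and the substantive work has already been done in Theorem~\ref{thm:infinite} (the norm-equation definability) and in the lemmas of Section~\ref{sec:definability_and_norm_eqs} (the translation between norm equations and polynomial systems, and the construction of first-order formulas from bounded universal quantifiers over diophantine sets). The only mildly delicate bookkeeping point is verifying that the coefficients of the defining tower for ${\bf L}_4(\sqrt[q]{a})$ are given by polynomial (not merely rational) expressions in $a,b,x$, which is handled by introducing auxiliary variables and equations $u(bx^q+b^q)=1$, $vx = 1$, $wa = 1$ to eliminate the inverses before invoking Lemma~\ref{le:norm_as_diophantine}.
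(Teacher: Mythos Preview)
Your proposal is correct and follows essentially the same approach as the paper: invoke Lemma~\ref{le:norm_as_diophantine} to see that the set of triples $(a,b,x)$ for which the norm equation over ${\bf L}_4(\sqrt[q]{a})$ is solvable is diophantine in ${\bf K}$, and then apply Lemma~\ref{le:fo} together with the hypothesis on $\calV_{{\bf K},\calS}$ and $U_{{\bf K},\calS}$ and Theorem~\ref{thm:infinite}. The paper's proof is terser and omits the bookkeeping about inverses that you mention, but the argument is the same.
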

\begin{proof}
    By Lemma \ref{le:norm_as_diophantine}, the set
\[
    A_{\bf K} := \{(a,b,x)\in {\bf K}^3 \mid \exists y \in {\bf L}_4, \ {\mathbf N}_{{L_4(\sqrt[q]{a})}/{L}_4}(y)=bx^q+b^q \}
\]
is diophantine over ${\bf K}$.
Hence, by Lemma \ref{le:fo} we have that the subset of ${\bf K}$ defined by \eqref{eq:sentence} is first-order definable over ${\bf K}$.
\end{proof}

\begin{corollary}    
\label{cor:S-ints_def}
 Let $\calS$ be a finite set of primes of $K$. Assume either that ${\bf K}/K$ is Galois, or that every prime in $\calS$ is uniformly $q$-bounded in~${\bf K}$. Then the ring of $\calS$-integers $\OO_{{\bf K}, \calS}$ is definable in ${\bf K}$.

\end{corollary}

\begin{proof}
     Corollary~\ref{cor:unit_gps_definable} proves that $U_{{\bf K}, \calS}$ and $\calV_{{\bf K}, \calS}$ are definable in ${\bf K}$ in these cases, so Theorem~\ref{thm:infinite} provides a first-order definition of $\calS$-integers.
\end{proof}
\begin{corollary}
\label{cor:infmany}
    Let ${\bf K}/\F_p(t)$  be a $q$-bounded extension.  Then there are infinitely many non-constant $u$ such that the integral closure $\OO_{\mathbf{K},u}$ of $\F_p[u]$ in ${\bf K}$ is definable over ${\bf K}$.
\end{corollary}
\begin{proof}
Let $\pp_{\F_p(t)}$ be any prime of $\F_p(t)$.  By Corollary \ref{cor:existunifqbound}, there are infinitely many pairs $(K,\pp_K)$ where $K \subset {\bf K}$ is finite extension of $\F_p(t)$ and the $K$-prime $\pp_K$ is uniformly bounded in ${\bf K}$.  By the Strong Approximation Theorem (see \cite[\S15]{Cassels_GlobalFields}), there exists $u \in K$ such that $u$ has a pole at $\pp_K$ only.   Let $\calS=\{\pp_K\}$.  Then the integral closure of $\OO_{{\bf K},\calS}$ is definable over ${\bf K}$ by Corollary~\ref{cor:S-ints_def}.  Observe that $\OO_{K,\calS}$ is the integral closure of $\F_p[u]$ in $K$.  Therefore, therefore the integral closure of $\F_p[u]$ is definable over ${\bf K}$.
\end{proof}

\begin{remark}
    In this paper we generically used only one prime $q$ to describe ``boundedness''.  However, as described in \cite{Shlapentokh18}, one can use different rational primes for different purposes.  For example in a $q$-bounded field, a prime of $\F_p(t)$ can be uniformly $q'$-bounded for some prime $q' \ne q$.  So one would use $q'$ in the definition of the integral closure of a valuation ring.  
    
    More generally, in some fields the primes of $\F_p(t)$ can be partitioned into a finite collection of sets with each set of primes $q$-bounded with a different $q$.  There are examples of such infinite extensions in \cite{Shlapentokh18} and a similar construction will work in some infinite extension of $\F_p(t)$.
\end{remark}

\begin{theorem}
\label{thm:empty}
    If ${\bf K}$ is a $q$-bounded algebraic extension of $\F_p(t)$, then the set of all elements of ${\bf K}$ algebraic over $\F_p$ has a first-order definition over ${\bf K}$.
\end{theorem}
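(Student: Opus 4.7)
The constant field ${\bf k}$ is precisely the integral closure of $\F_p$ in ${\bf K}$, i.e., the set of elements integral at every prime of ${\bf K}$; in the paper's notation, ${\bf k} = \OO_{{\bf K}, \emptyset}$. My plan is to reuse the formula of Theorem~\ref{thm:infinite} verbatim with $\calS = \emptyset$. The essential gain from this choice is that $\calV_{{\bf K}, \emptyset}$ and $U_{{\bf K}, \emptyset}$ both equal ${\bf K}$, since their defining conditions are vacuous; so no separate first-order definition of these unit groups is required. Consequently, the uniform $q$-boundedness hypothesis of Corollary~\ref{cor:unit_gps_definable} can be dropped, which is exactly why Theorem~\ref{thm:empty} holds under mere $q$-boundedness.

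\textbf{Forward direction.} Consider
\[
\Phi(x)\ :\ \forall a,b \in {\bf K}^{\times}\ \exists y \in {\bf L}_4(\sqrt[q]{a}) :\ {\bf N}_{{\bf L}_4(\sqrt[q]{a})/{\bf L}_4}(y) = bx^q + b^q,
\]
with ${\bf L}_1, {\bf L}_3, {\bf L}_4$ built from $a,b,x$ as in Theorem~\ref{thm:infinite}, where the formula is interpreted as vacuously true when $bx^q+b^q=0$. By Lemmas~\ref{le:norm_as_diophantine} and~\ref{le:fo}, $\Phi$ is first-order over ${\bf K}$. For $x \in {\bf k}$, the equality $v(x) = 0$ at every place $v$ implies that all poles of $bx^q+b^q$ are poles of $b$ with order $qv(b)$, hence automatically $q$-divisible, while zeros of $bx^q+b^q$ lie only at primes where $v(b) \geq 0$ and are made $q$-divisible upon passing to ${\bf L}_1$ by Lemma~\ref{le:ramify}. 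A parallel application of Lemma~\ref{le:split2} along the tower, using that $v(x)=0$ trivially satisfies the hypothesis, shows the ${\bf L}_4$-divisor of $a$ is a $q$-th power. Proposition~\ref{prop:finandinfin} reduces to a finite global subfield containing $a,b,x$, where Proposition~\ref{prop:hasse_application} supplies a solution; Lemma~\ref{le:sol_down_to_sol_up} then lifts this back to ${\bf L}_4(\sqrt[q]{a})$.

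\textbf{Backward direction and main obstacle.} Conversely, if $x \notin {\bf k}$, then $x$ has a pole at some prime $\pp_{{\bf K}}$. I apply Lemma~\ref{le:choose_a_b_S_integers} with $\calS = \emptyset$: the unit-group constraints on $a,b$ become vacuous, and the only nontrivial ingredient is the existence of a non-$q$-th-power residue class at $\pp_{K_1}$, which is furnished by Corollary~\ref{cor:extq} under the hypothesis of $q$-boundedness. This yields a finite $K_1 \subseteq {\bf K}$ and elements $a, b \in K_1^{\times}$ satisfying, at every prime $\pp_{K_2}$ above $\pp_{{\bf K}}$ in every finite intermediate extension $K_2$, that $\ord_{\pp_{K_2}} b < 0$ is coprime to $q$, $a$ is a non-$q$-th-power unit at $\pp_{K_2}$, and $\ord_{\pp_{K_2}} x < \frac{q-1}{q}\ord_{\pp_{K_2}} b$ (the latter holding because $x$ has a pole there). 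Were $\Phi(x)$ true for these $a,b$, Proposition~\ref{prop:finandinfin} would descend a solution to a finite subextension, where Lemma~\ref{le:nosolution} at $\pp_{\hat K}$ yields a contradiction. The principal technical obstacle is verifying that Lemma~\ref{le:choose_a_b_S_integers} functions correctly with $\calS = \emptyset$; this reduces entirely to the availability of the non-$q$-th-power residue guaranteed by Corollary~\ref{cor:extq}, which in turn is where the hypothesis of $q$-boundedness is used.
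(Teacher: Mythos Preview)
Your proposal is correct and follows essentially the same approach as the paper: take the formula of Theorem~\ref{thm:infinite} with $\calS=\emptyset$, so that the quantification over $\calV_{{\bf K},\calS}$ and $U_{{\bf K},\calS}$ becomes unrestricted and no auxiliary definability of unit groups is needed. The paper's own proof is terse, merely pointing to the argument of Theorem~\ref{thm:infinite}, and you have filled in the details faithfully; the only minor slip is that in the forward direction you invoke Proposition~\ref{prop:finandinfin} to pass to a finite subfield, whereas all that is needed there is to work directly in $K(a,b,x)$ and apply Lemma~\ref{le:existsolutions} (with $\calS=\emptyset$, whose proof goes through since $x$ has no poles) before lifting via Lemma~\ref{le:sol_down_to_sol_up}.
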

\begin{proof}
Consider the following first-order sentence:
\begin{equation}
    \label{eq:sentence_constants}
    \forall a \  \forall b \ \exists y \in {\bf L}_4(\sqrt[q]{a}) : {\mathbf N}_{{\bf L}_4(\sqrt[q]{a})/{ \bf L}_4}(y)=bx^q+b^q
    \end{equation}
Now the argument similar to the one used to prove Theorem \ref{thm:infinite} shows that this sentence is true for some $x \in {\bf K}$ if and only if $x$ has no poles at any valuation of $\F_p(t,x)$.  Hence, the sentence is true of $x \in {\bf K}$ if and only if $x$ is a constant.  Now, by the same argument used in the proof of Corollary \ref{cor:fo}, we have that \eqref{eq:sentence_constants} is a first-order sentence in the language of rings.
\end{proof}

\section{First-order undecidability of rings of \texorpdfstring{$\calS$}{S}-integers} 
\label{sec:decidability}
In this section we describe some results concerning undecidability and infinite extensions of global fields.
We write $\OO_{{\bf K}}$ for the integral closure of $\F_p[t]$ in ${\bf K}$, which is equivalent to taking $\calS$ to be the set containing the pole of $t$ in the notation of the previous section.
Recall that the constant subfield of an extension ${\bf K}/\F_p(t)$ is the algebraic closure of $\F_p$ inside~${\bf K}$.
\subsection{The case of an infinite constant field}
\begin{proposition}
\label{prop:polyring}
    Let ${\bf K}$ be any extension of $\F_p(t)$
    and let $\bf k$ be the field of constants of ${\bf K}$. 
    If ${\bf k}$  is infinite and  first-order definable in $\OO_{{\bf K}}$, then ${\bf k}[t]$ is first-order definable in $\OO_{{\bf K}}$. 
\end{proposition}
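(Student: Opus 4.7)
The plan is to exhibit ${\bf k}[t]$ as the first-order definable subset
\[
R := \bigl\{f \in \OO_{\bf K} : \forall c \in {\bf k}, \ \exists d \in {\bf k}, \ f - d \in (t-c)\OO_{\bf K}\bigr\}
\]
of $\OO_{\bf K}$. Since ${\bf k}$ is definable in $\OO_{\bf K}$ by hypothesis and divisibility in $\OO_{\bf K}$ is existential, $R$ is first-order definable with parameter $t$. The inclusion ${\bf k}[t] \subseteq R$ will be immediate by taking $d = f(c)$. For the reverse inclusion, I will fix $f \in R$ and choose the minimal $s \geq 0$ such that $f^{p^s}$ is separable over ${\bf k}(t)$, using the fact that $f^{p^s} \in R$ as well, since $(f - d_c)^{p^s} = f^{p^s} - d_c^{p^s}$ lies in $(t-c)^{p^s}\OO_{\bf K} \subseteq (t-c)\OO_{\bf K}$.

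The first main step will be a Galois-theoretic divisor argument showing $f^{p^s} \in {\bf k}[t]$. I will let $M$ be the Galois closure of ${\bf k}(t, f^{p^s})/{\bf k}(t)$, let $\widetilde{\OO}_M$ be the integral closure of ${\bf k}[t]$ in $M$, and let $G = \Gal(M/{\bf k}(t))$. For any $\sigma \in G$ and any prime $\mathfrak{P}$ of $\widetilde{\OO}_M$ above $(t-c)$ with $c \in {\bf k}$, the residue of $\sigma(f^{p^s})$ at $\mathfrak{P}$ equals $\sigma$ applied to the residue of $f^{p^s}$ at $\sigma^{-1}(\mathfrak{P})$; since $\sigma^{-1}(\mathfrak{P})$ is again a prime above $(t-c)$ where $f^{p^s} \equiv d_c^{p^s}$, this residue equals $\sigma(d_c^{p^s}) = d_c^{p^s}$. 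Thus $\sigma(f^{p^s}) - f^{p^s}$ vanishes at some prime of $\widetilde{\OO}_M$ above $(t-c)$ for every $c \in {\bf k}$, producing infinitely many distinct zeros in the function field $M$. This forces $\sigma(f^{p^s}) = f^{p^s}$ for every $\sigma$, so $f^{p^s} \in M^G = {\bf k}(t)$, and integrality yields $\psi(t) := f^{p^s} \in {\bf k}[t]$.

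The second main step—which will be the principal obstacle when $s \geq 1$—extracts a $p^s$-th root of $\psi$ via Hasse derivatives. Using $\OO_{\bf K} \cap {\bf k}(t) = {\bf k}[t]$ together with $\psi(t) - d_c^{p^s} = (f - d_c)^{p^s} \in (t-c)^{p^s}\OO_{\bf K}$, I will deduce that $\psi(t) \equiv d_c^{p^s} \pmod{(t-c)^{p^s}}$ in ${\bf k}[t]$ for every $c \in {\bf k}$. Consequently the Hasse derivatives $\psi^{[i]}$ for $1 \leq i \leq p^s - 1$ vanish on the infinite set ${\bf k}$, and hence vanish identically; by Lucas' theorem this forces $\psi \in {\bf k}[t^{p^s}]$. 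Writing $\psi(t) = \sum_m b_m t^{m p^s}$ and using the perfectness of ${\bf k}$ (being algebraic over $\F_p$), I will set $\phi(t) := \sum_m b_m^{1/p^s} t^m \in {\bf k}[t]$, which satisfies $\phi(t)^{p^s} = \psi(t) = f^{p^s}$; the uniqueness of $p^s$-th roots in characteristic $p$ then yields $f = \phi(t) \in {\bf k}[t]$. In the separable case $s = 0$, Step 1 alone immediately gives $f \in {\bf k}(t) \cap \OO_{\bf K} = {\bf k}[t]$, so Step 2 is only needed in the genuinely inseparable situation.
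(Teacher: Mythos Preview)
Your proof is correct and follows the same strategy as the paper: both define the same set $R$ and argue that any $f\in R$ coincides with all its conjugates over ${\bf k}(t)$ because their difference would otherwise have infinitely many zeros (one above each $t-c$). The paper passes to the normal closure of ${\bf K}$ over ${\bf k}(t)$ and argues directly with conjugates of $x$, while you work in the finite Galois closure of ${\bf k}(t,f^{p^s})$; this is only a cosmetic difference.

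The one substantive addition in your argument is the treatment of inseparability. The paper's proof concludes from $\hat x=x$ for every conjugate $\hat x$ that $x\in{\bf k}(t)$, which as written only yields that $x$ is purely inseparable over ${\bf k}(t)$. Your reduction to the separable element $f^{p^s}$, followed by the Hasse-derivative argument showing $\psi\in{\bf k}[t^{p^s}]$ and the extraction of a $p^s$-th root using perfectness of ${\bf k}$, closes this gap cleanly. So your proof is a more careful version of the same idea, and the extra Step~2 is genuinely needed when ${\bf K}/{\bf k}(t)$ is not separable.
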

\begin{proof}
    Let $x \in \OO_{{\bf K}}$ and suppose that for every $c\in {\bf k}$ there exists $ b \in {\bf k}$ such that $x -b\equiv 0 \bmod (t-c)$.  
    Let $\hat {\bf K}$ be the normal closure of ${\bf K}$ over ${\bf k}(t)$.  
    Let $\OO_{\hat {\bf K}}$ be the integral closure of $\OO_{{\bf K}}$ in $\hat {\bf K}$.  
    Then the equivalence holds in $\OO_{\hat {\bf K}}$ too. 
    Let $\hat x$ be any conjugate of $x$ over ${\bf k}(t)$.  
    Then $\hat x -b\equiv 0 \bmod (t-c)$ and $\hat x -x \equiv 0 \bmod (t-c)$.  
    Since the last equivalence must hold for infinitely many $c$, we conclude that $x=\hat x$ or, alternatively, $x \in {\bf k}(t) \cap \OO_{{\bf K}}={\bf k}[t]$.
\end{proof}

\begin{corollary}
    \label{cor:dec_integral_infinite_constants}
    If ${\bf K}$ is a $q$-bounded algebraic extension of $\F_p(t)$ with an infinite fields of constants ${\bf k}$, then $\F_p[t]$ is definable in $\OO_{\bf K}$ and the first-order theory of $\OO_{\bf K}$ is undecidable.
\end{corollary}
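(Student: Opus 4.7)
The plan is to exhibit $\F_p[t]$ as a first-order definable subring of $\OO_{\bf K}$ and then invoke the known undecidability of the first-order theory of $\F_p[t]$. The argument proceeds in three stages, each building on the previous.

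Stage one: show ${\bf k}$ is first-order definable in $\OO_{\bf K}$. Theorem~\ref{thm:empty} gives a first-order definition of ${\bf k}$ in ${\bf K}$, so it suffices to transfer this down to $\OO_{\bf K}$. Every $x \in {\bf K}$ is algebraic over $\F_p(t)$, and clearing denominators in a minimal equation produces $d \in \F_p[t]$ with $dx \in \OO_{\bf K}$; hence ${\bf K} = \operatorname{Frac}(\OO_{\bf K})$. The fraction field is first-order interpretable in $\OO_{\bf K}$ via equivalence classes of pairs $(u,v) \in \OO_{\bf K}^2$ with $v \neq 0$, so the ${\bf K}$-formula for ${\bf k}$ pulls back to a first-order formula in $\OO_{\bf K}$; instantiating at pairs $(x,1)$ with $x \in \OO_{\bf K}$ defines ${\bf k} \subseteq \OO_{\bf K}$. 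Since ${\bf k}$ is infinite by hypothesis, Proposition~\ref{prop:polyring} then yields a first-order definition of ${\bf k}[t]$ inside $\OO_{\bf K}$ (with $t$ as a parameter).

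Stage two: define $\F_p[t]$ inside ${\bf k}[t]$ via a Frobenius identity. For any $g = \sum_i a_i t^i \in {\bf k}[t]$ and $\alpha \in {\bf k}$, one computes
\[
g(\alpha^p) - g(\alpha)^p = \sum_i (a_i - a_i^p) \alpha^{ip}.
\]
If this vanishes for every $\alpha \in {\bf k}$, then the polynomial $\sum_i (a_i - a_i^p) T^{ip} \in {\bf k}[T]$ has infinitely many roots and is therefore zero, forcing $a_i \in \F_p$ for every $i$; the converse is immediate. Encoding the evaluation $g(\alpha) = \beta$ as the divisibility $(t - \alpha) \mid (g - \beta)$, I obtain the first-order definition
\[
\F_p[t] = \{g \in {\bf k}[t] : \forall \alpha, \beta \in {\bf k}\ [(t-\alpha) \mid (g-\beta)] \Rightarrow [(t-\alpha^p) \mid (g-\beta^p)]\}.
\]

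Stage three: the first-order theory of $\F_p[t]$ is classically undecidable (for instance, because its existential theory is undecidable by \cite{Shlapentokh92}, which generalizes Denef's result). Combined with the first-order definability of $\F_p[t]$ inside $\OO_{\bf K}$, this forces the first-order theory of $\OO_{\bf K}$ to be undecidable as well.

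The principal obstacle is the first stage: transferring the definition of ${\bf k}$ from ${\bf K}$ to $\OO_{\bf K}$ in a first-order way, since the quantifiers in Theorem~\ref{thm:empty} range over ${\bf K}$ rather than over the subring. The standard fraction field interpretation resolves this but requires the quantifier-by-quantifier translation of the defining formula. The Frobenius identity in the second stage, once stated, is verified by an elementary polynomial argument that crucially uses the infinitude of ${\bf k}$.
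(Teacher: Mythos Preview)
Your proof is correct and follows the same overall structure as the paper's: define ${\bf k}$ in $\OO_{\bf K}$ via Theorem~\ref{thm:empty}, apply Proposition~\ref{prop:polyring} to get ${\bf k}[t]$, pass to $\F_p[t]$, and invoke undecidability of $\F_p[t]$.

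The one genuine difference is your Stage two. The paper simply cites \cite{Shlapentokh93}*{Theorem 4.17} for the definability of $\F_p[t]$ inside ${\bf k}[t]$, whereas you supply an explicit and elementary Frobenius argument: the condition $g(\alpha^p)=g(\alpha)^p$ for all $\alpha\in{\bf k}$ forces all coefficients of $g$ to lie in $\F_p$ because ${\bf k}$ is infinite, and this condition is encoded first-order via divisibility by $t-\alpha$. This is a nice self-contained substitute for the cited result. Your Stage one is also more detailed than the paper's one-line ``and therefore also in $\OO_{\bf K}$,'' spelling out the fraction-field interpretation; this is the right justification. For Stage three, the paper cites Denef \cite{Denef79} directly rather than \cite{Shlapentokh92}, but either reference suffices.
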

\begin{proof}
    By Theorem~\ref{thm:empty}, the field of constants is definable in ${\bf K}$, and therefore also in $\OO_{\bf K}$. 
    Thus, ${\bf k}[t]$ is definable in $\OO_{\bf K}$ by Proposition~\ref{prop:polyring}.
    By a result of the first author \cite{Shlapentokh93}*{Theorem 4.17}, we have that $\F_p[t]$ is definable in ${\bf k}[t]$, so $\F_p[t]$ is definable in $\OO_{\bf K}$.  
    By a result of Denef (\cite{Denef79}), the theory of $\F_p[t]$ is undecidable.  
    Hence the theory of $\OO_{\bf K}$ is undecidable.
\end{proof}

\begin{corollary}
\label{cor:dec_field_infinite_constants_general}
    Let ${\bf K}$ be a $q$-bounded algebraic extension of $\F_p(t)$ with an infinite constant field.  Then the first-order theory of ${\bf K}$ is undecidable.
\end{corollary}
\begin{proof}
    This is a combination of  Corollary~\ref{cor:infmany} and Corollary~\ref{cor:dec_integral_infinite_constants}.
\end{proof}

\subsection{The case of finitely many factors}
\label{subsec:fin_factors}
In this section we consider a class of algebraic extensions  ${\bf K}$ of $\F_p(t)$ with constant field ${\bf k}$ enjoying the following property.  
There exists $t \in {\bf K} \setminus {\bf k}$  and a prime $\pp_{{\bf k}(t)}$ of ${\bf k}(t)$ with finitely many distinct factors in ${\bf K}$.   
The fact that $\pp_{{\bf k}(t)}$ has finitely many distinct factors only in ${\bf K}$ implies that there exists a finite extension $K$ of ${\bf k}(t)$ and a prime $\pp_K$ of $K$ such that $\pp_K$ has only one prime $\pp_{\bf K}$ above it in ${\bf K}$. Let $\calS_{\bf K}=\{\pp_{\bf K}\}$.  
Then by \cite{Shlapentokh92}*{Theorem~5.1} the ring $\calO_{{\bf K},\calS_{\bf K}}$ is existentially undecidable in the language $(0,1, \times, +, s)$, where $s$ is a parameter representing either a non-constant element or equivalently a non-unit of the ring (whether or not the field of constants is infinite).  

If we use the full first-order language, then we can say that an element of the ring is not a unit and therefore not a constant.  Hence, the first-order theory of such a ring is undecidable in the ring language without parameters. The fields to which \cite{Shlapentokh92}*{Theorem~5.1} applies include the towers of rational function fields from the undecidability result in \cite{MRSU24}.  In particular, in the undecidability result in \cite{MRSU24}, no restriction on $r$ or characteristic is necessary. The following summarizes our decidability result in the case of a prime with finitely many factors.

\begin{theorem}
    Let ${\bf K}$ be an $q$-bounded extension of a global function field $K$ and assume there is a prime $\pp_{K}$ of $K$ with only finitely many factors in ${\bf K}$. Then the first order theory of ${\bf K}$ in the language $\{0,1,+,\times\}$ is undecidable.
\end{theorem}
\begin{proof}
This follows from the discussion above. Namely, letting $\calS_{\bf K}$ be the primes of ${\bf K}$ over $\pp_K$, we observe that the first-order theory of the ring of $\calS_{\bf K}$-integral functions $\OO_{K, \calS_{\bf K}}$ is undecidable by \cite{Shlapentokh92}*{Theorem~5.1} and $\OO_{K, \calS_{\bf K}}$ is definable in ${\bf K}$ by Corollary~\ref{cor:S-ints_def}.
\end{proof}

In \cite{Shlapentokh92}, there is no discussion of how to construct fields admitting a prime with finitely many factors.  To conclude the paper, we show that such a construction is easy, concentrating on the cases where the algebraic closure of $\F_p$ in ${\bf K}$ is finite. We use a construction designed to produce a prime inert in an infinite extension. In comparison, a tower of fields $\F_p(t^{1/m_i})$ with $m_i | m_{i+1}$ produces a totally ramified prime in the infinite extension.  In either case the result of \cite{Shlapentokh92}*{Theorem~5.1} applies, since the relevant condition is for a prime of $\F_p(t)$ to have only finitely many distinct ideals above it in the infinite extension. \\

We start with two preliminary lemmas.

\begin{lemma}
\label{le:inert2}
  Let $K$ be a global function field and let $L$ be a finite extension of $K$.  Let $\pp_K$ be a prime of $K$, let $\alpha$ be a generator of $L/K$ and assume that $\alpha$ is integral with respect to $\pp_K$.  Let $g(X)\in K[X]$ be the monic irreducible polynomial of $\alpha$ over $K$.  If $g(X)$ remains irreducible in the residue field of $\pp_K$, then the prime $\pp_K$ is inert in the extension $L/K$.
\end{lemma}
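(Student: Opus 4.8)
The plan is to use the standard correspondence between factorizations of $\pp_K$ in $L = K(\alpha)$ and factorizations of $g(X)$ modulo $\pp_K$, as in Dedekind's theorem, specialized to the case at hand. First I would observe that since $\alpha$ is integral at $\pp_K$, the ring $\OO_{\pp_K}[\alpha]$ sits inside the integral closure $\OO_{\pp_K, L}$, and in fact for the purpose of detecting \emph{inertness} we can pass to completions and work with $L_{\pp_L} = K_{\pp_K}(\alpha)$. The key point is that the degree of the local extension $[L_{\pp_L} : K_{\pp_K}]$ equals $\deg h$, where $h(X) \in K_{\pp_K}[X]$ is the monic irreducible factor of $g(X)$ over $K_{\pp_K}$ having $\alpha$ as a root.

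The main step is then to show that the hypothesis ``$g(X)$ remains irreducible modulo $\pp_K$'' forces $g(X)$ to be irreducible over the complete local field $K_{\pp_K}$ as well. This follows from Hensel's lemma / the theory of reduction of polynomials over complete discrete valuation rings: if $g = h_1 h_2$ were a nontrivial factorization over $K_{\pp_K}$ with $h_i$ monic (which we may assume, since $g$ is monic and $\OO_{\pp_K}$ is integrally closed, so each monic factor has coefficients in the valuation ring $\OO_{K_{\pp_K}}$), then reducing modulo $\pp_K$ would give a nontrivial factorization $\bar g = \bar h_1 \bar h_2$ in $\kappa_{\pp_K}[X]$, both factors of positive degree since the $h_i$ are monic of positive degree. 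This contradicts the irreducibility of $\bar g$. Hence $g$ is irreducible over $K_{\pp_K}$, so $[L_{\pp_L} : K_{\pp_K}] = \deg g = [L : K]$. Since the local degree equals the global degree, there is a unique prime $\pp_L$ above $\pp_K$ with $e(\pp_L/\pp_K) = 1$ and $f(\pp_L/\pp_K) = [L:K]$; equivalently, $\pp_K$ is inert in $L/K$.

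I expect the only genuinely delicate point to be the justification that a monic factorization of $g$ over $K_{\pp_K}$ automatically has factors with integral coefficients — this is exactly Gauss's lemma over the discrete valuation ring $\OO_{K_{\pp_K}}$, valid because that ring is integrally closed (indeed a PID) and $\alpha$, being a root of the monic $g$, is integral. Everything else is bookkeeping with the fundamental identity $\sum_i e_i f_i = [L:K]$ and the fact that completion at $\pp_K$ commutes with the local data. One could alternatively phrase the whole argument without completions, directly via Dedekind's factorization theorem applied to the order $\OO_{\pp_K}[\alpha]$ (whose conductor at $\pp_K$ is trivial precisely because we only need the statement at $\pp_K$ and $\alpha$ generates $L$), but the completed-local-field formulation is cleanest and avoids conductor subtleties.
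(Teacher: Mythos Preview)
Your argument is correct in outline but takes a longer route than the paper, and it contains one small logical slip at the end. You pass to the completion $K_{\pp_K}$, use Gauss's lemma to show that a monic factorization of $g$ over $K_{\pp_K}$ would reduce to a nontrivial factorization of $\bar g$, and conclude that $g$ is irreducible over $K_{\pp_K}$, hence $[L_{\pp_L}:K_{\pp_K}]=[L:K]$. That is fine, but from ``local degree equals global degree'' you only get that there is a \emph{unique} prime $\pp_L$ above $\pp_K$ with $e(\pp_L/\pp_K)f(\pp_L/\pp_K)=[L:K]$; it does \emph{not} by itself give $e=1$. (For instance, an Eisenstein polynomial is irreducible over $K_{\pp_K}$ but yields a totally ramified extension.) To finish you must reuse the hypothesis once more: the image $\bar\alpha\in\kappa_{\pp_L}$ is a root of the irreducible $\bar g\in\kappa_{\pp_K}[X]$, so $f(\pp_L/\pp_K)=[\kappa_{\pp_L}:\kappa_{\pp_K}]\geq\deg g$, which forces $e=1$.

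By contrast, the paper's proof is a three-line residue-field argument with no completions, no Hensel, and no Gauss: pick any $\pp_L$ above $\pp_K$, note that $\bar\alpha\in\kappa_{\pp_L}$ is a root of the irreducible $\bar g$, so $f(\pp_L/\pp_K)\geq\deg g=[L:K]$; the fundamental identity then forces $f(\pp_L/\pp_K)=[L:K]$, i.e.\ $\pp_K$ is inert. Your detour through irreducibility over $K_{\pp_K}$ is unnecessary here---the residue-field observation you need at the very end is already the whole proof.
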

\begin{proof}
    Let $\pp_L$ be a prime of $L$ over $\pp_K$.  The element $\alpha$ reduces to a root of $g(X)$ in the residue field $\kappa_{\pp_L}$, so we have 
    $$
     f(\pp_L/\pp_K) = [\kappa_{\pp_L} : \kappa_{\pp_K}] = \deg(g) = [L : K]
     $$ 
     because $g(X)$ is irreducible modulo $\pp_K$. Therefore, $\pp_K$ is inert in $L$ by definition.
\end{proof}

\begin{lemma}
\label{le:totram}
    Let $K$ be a global function field and let $P(X)=A_0+A_1X+\ldots X^r \in \F_p(t)[X]$ be such that there exists a prime $\qq_K$ such that $\ord_{\qq_K}A_0=-1$ and $\ord_{\qq_K}A_i \geq 0$ for $i>0$.  Then
    \begin{enumerate}
        \item $P(X)$ is irreducible.
        \item If $L$ is an extension of $K$ generated by a root of $P(X)$, then $\qq_K$ is completely ramified in the extension $L/K$.
    \end{enumerate}
\end{lemma}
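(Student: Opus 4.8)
This is a classical Eisenstein-type criterion adapted to the valuation $\ord_{\qq_K}$, so the plan is to mimic the standard proof of Eisenstein's criterion using the discrete valuation $v = \ord_{\qq_K}$ in place of a $p$-adic valuation. First I would extend $v$ to the algebraic closure $\Kbar$ of $K$ (or just to a splitting field of $P$), using the fact that a discrete valuation on a global function field extends to any finite extension, with the value group enlarged by the ramification index. Normalize so that $v(\qq_K$-uniformizer$) = 1$ on $K$.

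For part (1), I would argue by contradiction: suppose $P(X) = Q(X)R(X)$ with $Q, R \in K[X]$ monic of positive degree (we may assume monic since $P$ is monic; apply Gauss's lemma over the valuation ring $\OO_{\qq_K}$ if needed, though monicity already handles this). Since $\ord_{\qq_K}A_0 = -1 < 0$, the polynomial $P$ does not have coefficients in $\OO_{\qq_K}$, so the naive reduction mod $\qq_K$ does not apply directly; instead I would rescale. Note $v(A_0) = -1$ and $v(A_i) \geq 0$ for $i \geq 1$, with leading coefficient $1$ so $v(A_r) = 0$. Looking at the Newton polygon of $P$ with respect to $v$: the lower convex hull has vertices at $(0, -1)$ and $(r, 0)$, and all other points $(i, v(A_i))$ for $1 \le i \le r-1$ lie on or above the segment joining these, which has slope $1/r$. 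Since $\gcd(1, r)$ considerations show the only segment has slope $1/r$ with horizontal length $r$, the Newton polygon consists of a single segment of slope $1/r$; hence all roots of $P$ in $\Kbar$ have $v$-valuation exactly $-1/r$. But if $P = QR$ with $\deg Q = s$, $0 < s < r$, then $Q(0)$ is (up to sign) the product of $s$ roots, so $v(Q(0)) = -s/r \notin \Z$, contradicting $Q(0) \in K$ where $v$ takes integer values. Therefore $P$ is irreducible.

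For part (2), let $\alpha$ be a root of $P$ and $L = K(\alpha)$, so $[L:K] = r$ by part (1). Let $\qq_L$ be any prime of $L$ above $\qq_K$, and extend $v$ to $w = v_{\qq_L}$ normalized so $w|_K = e\cdot v$ where $e = e(\qq_L/\qq_K)$; equivalently, the value group of $w$ on $K^\times$ is $e\Z$ inside $\Z = w(L^\times)$. From the Newton polygon computation, $w(\alpha) = e \cdot (-1/r) = -e/r$, and this must be an integer, so $r \mid e$. Since $e \le [L:K] = r$, we get $e = r$, i.e.\ $\qq_K$ is totally ramified in $L/K$. (This also forces $\qq_L$ to be the unique prime above $\qq_K$ with $f(\qq_L/\qq_K) = 1$.)

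I do not anticipate a serious obstacle here; the only mild subtlety is being careful with the normalization of the extended valuation and with invoking Gauss's lemma / the fact that a factorization over $K$ of a monic polynomial can be taken with monic factors, so that the valuations of the constant terms of the factors are determined by the roots. The Newton polygon argument is the cleanest route and handles both parts uniformly; alternatively one could phrase part (1) purely via the ``rescaled Eisenstein'' trick (multiply through to clear the pole and reduce mod $\qq_K$), but the Newton polygon makes the totally-ramified conclusion in part (2) transparent as well.
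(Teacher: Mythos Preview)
Your proof is correct and follows essentially the same approach as the paper: both show that any root of $P$ has valuation $-1/r$ at an extension of $\qq_K$, and deduce $r\mid e$, hence $e=r$. The paper packages this slightly differently---it computes $\ord_{\qq_{K(z)}}z$ directly from the relation $\sum_{i=1}^r A_i z^i=-A_0$ (the dominant term being $z^r$) and then reads off both irreducibility and total ramification simultaneously from $e=r$, whereas you invoke the Newton polygon and prove irreducibility first via the constant term of a hypothetical factor; the underlying argument is the same.
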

\begin{proof}
    Let $z \in \bar K$ be a root of $P(X)$.  Then
    \[
    \sum_{i=1}^rA_iz^i=-A_0
    \]
    Let $\qq_{K(z)}$ be a prime above $\qq_K$ in $K(z)$.  Then, $\ord_{\qq_{K(z)}}z<0$ and 
    \[
    \ord_{\qq_{K(z)}}(A_0)=\ord_{\qq_{K(z)}} \sum_{i=1}^rA_iz^i=\ord_{\qq_{K(z)}}z^r=r\ord_{\qq_{K(z)}}z.
    \]
    Hence, $\ord_{\qq_{K(z)}}(A_0)\equiv 0 \bmod r$ while $\ord_{\qq_K}(A_0)=-1$.  Therefore, $e(\qq_{K(z)}/\qq_K)\equiv 0 \bmod r$ and hence the ramification is equal to $r$ implying that $P(X)$ is irreducible and $\qq_K$ is totally ramified in the extension $K(z)/K$.
\end{proof}

We now construct the desired fields extensions.
Let $\pp_{\F_p(t)}$ be the zero of $t$ in $\F_p(t)$ denoted in the future by $\pp_0$.  Let 
\[
p(t)=\sum_{i=0}^na_it^i\in \F_p(t)
\]
be  monic and irreducible over $\F_p$.  Consider now a polynomial 
\[
P(t,X)=\frac{a_0}{t+1}+\sum_{i=1}^na_i(t+X)^i \in \F_p(t)[X].  
\]
Modulo $\pp_0$, this polynomial is irreducible over $\F_p$ and therefore $P(t,X)$ is irreducible over $\F_p(t)$. Let $L_1$ be the extension of $\F_p(t)$ generated by a root of $P(t,X)$. By Lemma \ref{le:inert2}, we have that $\pp_0$ is inert in this extension.  Further, since the prime corresponding to $t+1$ is totally ramified in the extension $L_1/\F_p(t)$ by Lemma \ref{le:totram}, we have that there is no constant field extension between $\F_p(t)$ and $L_1$.

Assume inductively that we have constructed an extension $L_m$ of $\F_p(t)$ such that $\pp_0$ is inert in this extension and the constant field of $L_m$ is $\F_p$.  Let $\pp_m$ be the prime of $L_m$ above $\pp_0$.  Let $p_m(X)=\sum_{i=0}^{n_m}a_{i,m}X^i\in \F_p[X]$ be monic and irreducible over the residue field of $\pp_m$.  

Observe that there are only finitely many primes ramified in the extension $L_m/\F_p(t)$.  Therefore, there exists a positive integer $\ell$ such that any $\F_p(t)$-prime corresponding to a monic irreducible polynomial of degree higher than $\ell$ does not have any ramified factors in $L_m$. 
Let $q_m(t)$ be such that $q_m(t) \in \F_p[t]$, $\deg q_m(t) > \ell$, and $q_m(t)$ is monic and irreducible over $\F_p[t]$.   Then if $\qq_m$ is a prime of $L_m$ lying above the $\F_p(t)$-prime corresponding to the polynomial $q_m(t)$ we have that $\ord_{\qq_m}q_m(t)=1$. Let 
\[
    P_m(t,X)=\frac{a_{0,m}q_m(0)}{q_m(t)}+\sum_{i=1}^{n_m}a_{i,m}(t+X)^i \in \F_p(t)[X].
\]
Then $P_m(t,X) \equiv p_m(X)$ modulo $\pp_m$, where $p_m(X)$ is irreducible modulo $\pp_m$.  Therefore, $P_m(t,X)$ is irreducible over $L_m$.  Let $L_{m+1}$ be generated by a root of $P_m(t,X)$.  Then by Lemma \ref{le:inert2} we have that $\pp_m$ is inert in the extension $L_{m+1}/L_m$.  

At the same time $\qq_m$ is completely ramified in the extension $L_{m+1}/L_m$ and therefore there is no constant field extension from $L_m$ to $L_{m+1}$.  Thus, the constant field of $L_{m+1}$ is still $\F_p$.  

Let ${\bf K}=\bigcup_{i=0}^{\infty}L_i$.  Then the constant field of ${\bf K}$ is $\F_p$ and $\pp_0$ has a unique factor in ${\bf K}$, as desired, so the construction is finished.

\section{Defining arbitrary polynomial rings over \texorpdfstring{${\bf K}$}{K}}
\label{sec:def_poly_rings}
Our goal in this section is to show that if we can define one $\F_p$-polynomial ring over ${\bf K}$, then we can define all of them.  In other words, we assume that we are given two non-constant elements $u$ and $w$ of $\bf K$ such that $\F_p[u]$ has a first-order definition (with parameters) over ${\bf K}$ and show that the same is true of $\F_p[w]$. We deduce that all $\F_p$-polynomials rings are definable when ${\bf K}$ is a  $q$-bounded extension with infinite constant field.

Our proof exploits a result of J. Demeyer (\cite{Demeyer07}) which asserts that any c.e. subset of $\F_p[u]$ has an existential definition over $\F_p[u]$.  Before we state and prove the theorem, we first discuss the presentation of countable fields and rings and the notion of computable and c.e. subsets of these algebraic objects.
To  discuss computability of countable algebraic objects we map these objects into $\Z_{\geq 0}$ and translate binary operations over the object by functions from $\Z_{\geq 0}^2$ to $\Z_{\geq 0}$.  We call a field or ring computable if there exists a map as described above translating the ring operations by total computable functions.  

By a famous theorem of Rabin \cite{Rabin}, every countable field with a splitting algorithm (i.e. an algorithm to factor polynomials into irreducible factors) has a computable algebraic closure such that the domain of the field in question as a subset of $\Z_{\geq 0}$ is computable.  In our case the field in question is $\F_p(t)$, where $\F_p$ is a field of $p$ elements and $t$ is transcendental over $\F_p$.  It is well-known that this field has a splitting algorithm; see \cite{FJ23}.  Therefore, there exists a computable presentation of $\overline{\F_p(t)}$ such that the field $\F_p(t)$ is a computable subset of it.  We will identify our field ${\bf K}$ with a subfield of $\overline{\F_p(t)}$ presented as above.  Note that we do not assume that ${\bf K}$ is computable under this or any other presentation.

Given any transcendental element $u$ of $\bf K \subset \overline{\F_p(t)}$,  the field $\F_p(u)$ is a computable subset of $\overline{\F_p(t)}$.  Indeed, the domain of the field $\F_p(u)$ is certainly c.e. since we can list all rational functions in $u$ with coefficients in $\F_p$. Therefore, we can list all polynomials in some variable $T$ over $\F_p(u)$.

Since $\overline{\F_p(t)}$ is of transcendence degree 1, every element $x$ of $\overline{\F_p(t)}$ satisfies some monic polynomial $P(T)$ over $\F_p(u)$ which we can find by going through a list of all monic polynomials with coefficients in $\F_p(u)$.  Once we find a polynomial $P(T)$ such that $P(x)=0$, we can use the splitting algorithm over $\F_p(u)$ to factor $P(T)$ into  irreducible factors over $\F_p(u)$ and determine whether one of the factors is of the form $(T-x)$.  If such a linear factor exists, then $x \in \F_p(u)$ and it is not in $\F_p(u)$ otherwise.

If $w \in \overline{\F_p(t)}\setminus \F_p(u)$, we can determine the degree of $w$ over $\F_p(u)$.  Denoting this degree by $n$  we can represent every element of $\F_p(u,w)$ in  the form 
\[
\sum_{i=0}^{n-1}\frac{a_i(u)}{b(u)}w^i,
\]
where $a_i(u), b(u) \in \F_p(u)$. Since $\F_p(w)$ is also a computable field, we now have an algorithm to determine which $n+1$-tuples $(a_0(u),\ldots, a_{n-1}(u), b(u)) \in \F_p(u)^{n+1}$ correspond to elements of $\F_p(w)$ when used in the linear combination above.  Thus, the set  
\begin{equation}
    \label{eq:A}
    A=\{(a_0(u),\ldots, a_{n-1}(u), b(u)) \in \F_p[u]^{n+1} \mid \sum_{i=0}^{n-1}\frac{a_i(u)}{b(u)}w^i \in \F_p(w)\}
\end{equation}
is a computable subset of $\F_p(u)$.  Further, the set
\begin{equation}
    \label{eq:B}
    B=\{(a_0(u),\ldots, a_{n-1}(u), b(u)) \in \F_p[u]^{n+1} \mid \sum_{i=0}^{n-1}\frac{a_i(u)}{b(u)}w^i \in \F_p[w]\}
\end{equation}
is also computable.  To see that $B$ is computable note that we can generate a list of elements of $\F_p[w]$ and the list of elements of $\F_p(w)\setminus \F_p[w]$ using the splitting algorithm over $\F_p$.  Using this discussion, we can now prove the following theorem.

\begin{theorem}
\label{thm:def}
 Let ${\bf K}$ be an algebraic extension of $\F_p(t)$ and assume that for some non-constant $u$ we have a definition (with parameters) of $\F_p[u]$ over ${\bf K}$.   Then for any non-constant $w$ we have a definition (with parameters) of $\F_p[w]$.
\end{theorem}
\begin{proof}
    Let $\psi(T)$ be a first-order definition of $\F_p[u]$ over ${\bf K}$.
    Given the discussion of computability above, the set $B$ defined in~\eqref{eq:B} has a diophantine definition $p(a_0,\ldots, a_{n-1},b, Y_1,\ldots,Y_m)$ over $\F_p[u]$ by the result of J. Demeyer (\cite{Demeyer07}).
   Thus the following formula defines $\F_p[w]$: An element $y \in {\bf K}$ is contained in $\F_p[w]$ if and only if $\exists a_0,\ldots,a_{n-1}, b, Y_1,\ldots, Y_m \in {\bf K}$ so that
\[
\bigwedge_{i=0}^{n} \psi(a_i) \land \psi(b) \land \bigwedge_{i=1}^m\psi(Y_i) \land p(a_1,\ldots, a_{n-1},b, Y_1,\ldots,Y_m)=0 \land y=\sum_{i=0}^{n-1}\frac{a_i(u)}{b(u)}w^i. 
\]
\end{proof}

We can now apply Theorem \ref{thm:def} to the case of a $q$-bounded ${\bf K}$ with an infinite field of constants to obtain the following corollary.

\begin{corollary}
\label{cor:any_poly_ring_q_bounded}
Let ${\bf K}$ be a $q$-bounded algebraic extension of $\F_p(t)$ with an infinite constant field.  Let $w$ be any non-constant element of ${\bf K}$.  Then $\F_p[w]$ is first-order definable over ${\bf K}$ with parameters. 
\end{corollary} 
\begin{proof}
By Corollary \ref{cor:infmany} there exists a non-constant element $u \in {\bf K}$ such that $\OO_{{\bf K},u}$, the integral closure of $\F_p[u]$ in ${\bf K}$, is definable over ${\bf K}$. 
By Corollary~\ref{cor:dec_integral_infinite_constants}, if ${\bf K}$ has an infinite constant field, then $\F_p[u]$ is definable over $\OO_{{\bf K},u}$, and Theorem~\ref{thm:def}  applies.
\end{proof}
\begin{remark}
From an examination of the statements of Lemma~\ref{le:ab} and Theorem~\ref{thm:val_ring_first_order_def} it is easy to see that we need two parameters to define valuation rings for valuations occurring as poles of $u$ and we need a parameter for $u$ to define $\F_p[u]$.   Finally, we need a parameter for $w$ to define $\F_p[w]$.   
\end{remark}

\section{Appendix}
\subsection{Valuations of an infinite algebraic extension of \texorpdfstring{$\F_p(t)$}{Fp(t)}}
\label{ssec:valuations}
The only valuations of algebraic extensions of finite fields are trivial by virtue of the fact that all nonzero elements in such extensions have finite (multiplicative) order.
Let ${\bf K}$ be an infinite algebraic extension of $\F_p(t)$ and let ${\bf v}$ be a valuation of ${\bf K}$.  If $K \subset {\bf K}$ and $K/\F_p(t)$ is finite, then ${\bf v}_{|K}$ must be a discrete valuation $v_K$ since $K$ only has discrete valuations.  

If ${\bf \pp}$ is the valuation ideal of ${\bf v}$, then ${\bf \pp}$ contains the valuation ideal $\pp_K$ of $K$ corresponding to $v_K$ for any finite extension $K$ of $\F_p(t)$ contained in ${\bf K}$.  
At the same time, $v_K$ can have infinitely many extensions ${\bf v}$ to ${\bf K}$.  It is possible for $e({\bf v}/v)=\infty$ though the ramification can also be finite.  Similarly, $f({\bf v}/v)$ can be finite or infinite.  
The completion of ${\bf K}$ at different valuations ${\bf v}$ extending the same valuation $v$ of $\F_p(t)$ can correspond to different embeddings of ${\bf K}$ into $\overline{\F_p(t)_v}$.

In the definition of $q$-boundedness (Definition~\ref{def:q-bounded_tower_of_completions}), we refer to a tower of completions ${\bf K}_{T,\bf v}$ instead of referring to the completion ${\bf K}_{\bf v}$ itself.
This technical detail arises because the completion ${\bf K}_{\bf v}$ can be a transcendental extension of $K_{v_K}$ under certain conditions; see the proposition below.
In contrast, ${\bf K}_{T,\bf v}$ is always an algebraic extension of $K_{v_K}$ by design.
\begin{proposition}
If ${\bf  v}$ is unramified over $v_K$ and $\kappa_{\bf v}/\kappa_{v_K}$ is an infinite (algebraic) extension, then ${\bf K}_{\bf v}$ is a transcendental extension of $K_{v_K}$.
In particular, ${\bf K}_{\bf v}\supsetneq {\bf K}_{T,\bf v}$.
\end{proposition}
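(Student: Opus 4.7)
The plan is to exhibit a single element of ${\bf K}_{\bf v}$ that is transcendental over $K_{v_K}$; the strict containment ${\bf K}_{T,{\bf v}}\subsetneq {\bf K}_{\bf v}$ then follows immediately because every element of ${\bf K}_{T,{\bf v}}=\bigcup_i (\iota(K_i))_{v_i}$ lies in some finite algebraic extension of $\F_p(t)_v$, so ${\bf K}_{T,{\bf v}}$ is itself algebraic over $K_{v_K}$. For the construction, the key setup is as follows. Because ${\bf v}$ is unramified over $v_K$, the value group of ${\bf v}$ on ${\bf K}^\times$ is $\Z$, so $\OO_{\bf v}$ is a complete discrete valuation ring with residue field $\kappa_{\bf v}$, and any uniformizer $\pi\in K$ of $v_K$ remains a uniformizer of ${\bf K}_{\bf v}$. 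Since $\kappa_{v_K}$ is a finite field, the algebraic extension $\kappa_{\bf v}/\kappa_{v_K}$ is perfect, so Cohen's structure theorem for complete local rings of characteristic $p$ yields canonical identifications $K_{v_K}\cong \kappa_{v_K}((\pi))$ and ${\bf K}_{\bf v}\cong \kappa_{\bf v}((\pi))$ that are compatible with the inclusion $\kappa_{v_K}\subseteq \kappa_{\bf v}$.

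With these identifications in hand, I would choose elements $\alpha_0,\alpha_1,\ldots\in \kappa_{\bf v}$ with $[\kappa_{v_K}(\alpha_0,\ldots,\alpha_n):\kappa_{v_K}]\to \infty$, which is possible because $\kappa_{\bf v}/\kappa_{v_K}$ is infinite, and form
\[
y \;=\; \sum_{n=0}^{\infty} \alpha_n\pi^n \;\in\; \kappa_{\bf v}((\pi)) \;=\; {\bf K}_{\bf v}.
\]
Suppose for contradiction that $y$ is algebraic over $K_{v_K}$ and set $L=K_{v_K}(y)\subseteq {\bf K}_{\bf v}$, a finite extension. Since the value group of ${\bf K}_{\bf v}$ is $\Z$, the restriction to $L$ also has value group $\Z$, so $L/K_{v_K}$ is unramified and its residue field $\kappa_L$ is a finite extension of $\kappa_{v_K}$. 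A further application of Cohen's theorem identifies $L$ with $\kappa_L((\pi))$ inside $\kappa_{\bf v}((\pi))$. Uniqueness of the $\pi$-adic expansion in $\kappa_{\bf v}((\pi))$ then forces each digit $\alpha_n$ of $y$ to lie in $\kappa_L$, contradicting the unbounded growth of $[\kappa_{v_K}(\alpha_0,\ldots,\alpha_n):\kappa_{v_K}]$.

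The main obstacle I anticipate is justifying the compatibility of the two Cohen identifications: without knowing that the coefficient field of $L$ really sits inside the chosen coefficient field of ${\bf K}_{\bf v}$, the $\pi$-adic expansion of $y$ viewed in $L$ could a priori use different lifts than those in ${\bf K}_{\bf v}$, and the uniqueness argument would collapse. This is where perfectness of the residue fields is essential: for a complete discrete valuation ring $R$ of characteristic $p$ with perfect residue field, the coefficient field is canonical, namely $\bigcap_{n\geq 0} R^{p^n}$ inside $R$. Because this intersection is monotone in $R$, the coefficient field of $\OO_L$ is automatically contained in the coefficient field of $\OO_{\bf v}$, so the identification $L=\kappa_L((\pi))\subseteq \kappa_{\bf v}((\pi))$ is a genuine containment of subfields inside ${\bf K}_{\bf v}$, which validates the uniqueness step and completes the argument.
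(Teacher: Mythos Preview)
Your proof is correct. The core idea is the same as the paper's: construct a convergent series $\sum \alpha_n\pi^n$ whose ``digits'' $\alpha_n$ generate an infinite residue extension, and argue that no element of a finite extension of $K_{v_K}$ can have this property. The paper, however, carries this out by hand rather than invoking Cohen's structure theorem: it chooses the $\alpha_i$ as elements of ${\bf K}$ itself, forms $x=\sum_i\alpha_i y^i$ for a fixed $y\in K$ of positive value, sets $L_0=K_{v_K}(x)$, and then inductively ``peels off'' coefficients via $x\mapsto (x-\alpha_0)/y$, using $[L_1:L_0]=ef=1$ at each step to force $\alpha_i\in L_0$, whence $\kappa_{v_{L_0}}=\kappa_{\bf v}$ is infinite over $\kappa_{v_K}$. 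Your route is more structural: once the Cohen identifications are in place, the contradiction is immediate from uniqueness of $\pi$-adic expansions. What this buys you is a cleaner one-shot argument, and you correctly identify and resolve the one subtle point (compatibility of coefficient fields) via the canonical description $\bigcap_n R^{p^n}$ in the perfect-residue, equal-characteristic case. The paper's route is more elementary in that it cites nothing beyond $[L:K]=ef$ for finite extensions of complete discretely valued fields, at the cost of a slightly longer inductive argument.
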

\begin{proof}
Let $\{\alpha_0, \alpha_1,\dots\} \subseteq {\bf K}$ be a subset whose residue classes generate $\kappa_{\bf v}$ over $\kappa_{v_K}$, and   
let $y \in K^\times$ be an element such that ${\bf v}(y)  > 0$.
Consider the element $x =\sum_{i=0}^\infty \alpha_i y^i$, observing that $x \in {\bf K}_{\bf v}$ because the series converges in the ${\bf v}$-adic topology. 
We will prove that $x$ is transcendental over $K_{v_K}$.

Define $L_0=K_{v_K}(x)$ and $L_i=K_{v_K}(x, \alpha_0,\ldots, \alpha_{i-1})$ for $i \geq 1$, and let $v_{L_i}$ be the restriction of ${\bf v}$ to $L_i$.
Observe that $e(v_{L_i}/v_{L_0})=1$ for all $i$ because the extension ${\bf v}/v_K$ is unramified by assumption.
By the definition of $x$, we see $[x] = [\alpha_0]$ in $\kappa_{\bf v}$. Therefore, $\kappa_{v_{L_0}} = \kappa_{v_{L_0}}(\alpha_0)$, hence $f(v_{L_1}/v_{L_0}) = 1$.
This proves that 
$$[L_1:L_0]=e(v_{L_1}/v_{L_0})f(v_{L_1}/v_{L_0})=1.$$ 
In particular, we have $\alpha_0\in L_0$.
Replacing $x$ by $\frac{x-\alpha_0}{y}$ and continuing by induction, we find that $\alpha_i\in L_0$ for all $i$.
Therefore, because $\kappa_{v_{L_0}} = \kappa_{\bf v} = \kappa_{v_K}(\{\alpha_i\})$ is an infinite extension of $\kappa_{v_K}$, we see that $L_0 = K_{v_K}(x)$ must be an infinite extension  $K_{v_K}$.
Consequently, $x$ is a transcendental element over $K_{v_K}$, as claimed.
\end{proof}

 \bibliographystyle{alpha}
 \bibliography{references}
\end{document}